\numberwithin{equation}{section}
\def\ga{\mathfrak{a}}
\def\mfg{\mathfrak{g}}
\def\gh{\mathfrak{h}}
\def\gk{\mathfrak{k}}
\def\gp{\mathfrak{p}}
\def\gs{\mathfrak{s}}
\def\Ad{{\rm Ad}}
\def\trace{\mathrm{Tr}\,}
\newcommand{\res}{\mathrm{res}}
\renewcommand{\Im}{\mathop{\rm Im} }
\newcommand{\rO}{\mathrm{O}}
\def\C{\mathbb{C}}
\def\H{\mathbb{H}}
\def\N{\mathbb{N}}
\def\R{\mathbb{R}}
\def\Z{\mathbb{Z}}
\newcommand{\gL}{\Lambda }
\newcommand{\SO}{\mathrm{SO}}
\newcommand{\Sp}{\mathrm{Sp}}
\newcommand{\Spin}{\mathrm{Spin}}
\newcommand{\SU}{\mathrm{SU}}
\newcommand{\U}{\mathrm{U}}
\newcommand{\diag}{\mathrm{diag}}
\newcommand{\Tr}{\mathrm{Tr}\, }
\newcommand{\so}{\mathfrak{so}}
\def\cA{\mathcal{A}}
\def\cF{\mathcal{F}}
\def\cH{\mathcal{H}}
\def\cI{\mathcal{I}}
\def\cS{\mathcal{S}}
\newcommand{\Br}{B_r}
\newcommand{\Brc}{\overline{B_r(0)}}
\newcommand{\Exp}{\mathrm{Exp}}
\newcommand{\PW}{\mathrm{PW}}
\renewcommand{\Re}{\mathrm{Re}}
\newtheorem{theorem}[equation]{Theorem}
\newtheorem{lemma}[equation]{Lemma}
\newtheorem{corollary}[equation]{Corollary}
\newtheorem{proposition}[equation]{Proposition}
\newtheorem{remark}[equation]{Remark}
\newcommand{\fa}{\mathfrak{a}}
\newcommand{\fg}{\mathfrak{g}}
\newcommand{\fh}{\mathfrak{h}}
\newcommand{\fk}{\mathfrak{k}}
\newcommand{\fm}{\mathfrak{m}}
\newcommand{\fn}{\mathfrak{n}}
\newcommand{\fs}{\mathfrak{s}}
\newcommand{\fz}{\mathfrak{z}}
\newcommand{\rS}{\mathrm{S}}
\def\sideremark#1{\ifvmode\leavevmode\fi\vadjust{\vbox to0pt{\vss
 \hbox to 0pt{\hskip\hsize\hskip1em
\vbox{\hsize2cm\tiny\raggedright\pretolerance10000
 \noindent #1\hfill}\hss}\vbox to8pt{\vfil}\vss}}}%
\begin{document}

\title{The Paley-Wiener Theorem and Limits of Symmetric Spaces}

\author{Gestur \'{O}lafsson}
\address{Department of Mathematics, Louisiana State University, Baton Rouge,
LA 70803}
\email{olafsson@math.lsu.edu}
\thanks{The research of G. \'Olafsson was supported by NSF grant  DMS-0801010}

\author{Joseph A. Wolf}
\address{Department of Mathematics, University of California, Berkeley,
CA 94720--3840}
\email{jawolf@math.berkeley.edu}
\thanks{The research of J. A. Wolf was partially supported by NSF grant
DMS-0652840}

\subjclass[2000]{43A85, 53C35, 22E46}
\keywords{Injective and projective limits;
Spherical Fourier transform; Paley-Wiener theorem}

\date{}

\begin{abstract}
We extend the Paley--Wiener theorem for riemannian symmetric spaces
to an important class of infinite dimensional symmetric spaces.
For this we define a notion of propagation of symmetric spaces and
examine the direct (injective) limit symmetric spaces defined by
propagation.  This relies on some of our earlier work on
invariant differential operators and the action of Weyl group
invariant polynomials under restriction.
\end{abstract}
\maketitle

\section*{Introduction} \label{sec0}
\setcounter{equation}{0}
\noindent
We start with the notion of prolongation for symmetric spaces.  In
essence, a symmetric space $M_k$ is a prolongation of another, say
$M_n$, when $M_n$ sits in $M_k$ in the simplest possible way.  For
example, if $M_\ell = SU(\ell + 1)$, compact group manifold, then
$M_n$ sits in $M_k$ as an upper left hand corner.

Suppose that $M_k$ is a prolongation $M_n$ where both are of compact type or both of noncompact type. We prove surjectivity for restriction of Weyl group invariant
holomorphic functions of exponential growth $r$.  We discuss the conditions
on $r$ in a moment.  This gives a corresponding restriction result on the Fourier transform spaces and
then a sujective map $C^\infty_r(M_k)\to C^\infty_r(M_n)$.
Using results on conjugate and cut locus of compact symmetric spaces
we show that the radius of injectivity for compact symmetric spaces forming
a direct system, related by prolongation, is constant.  If $R$ is that
radius then the condition on the exponential growth size $r$ is a function of
$R$, thus constant for the direct system. This, together with the results
of \cite{OW10}, allows us to
carry the finite dimensional Paley--Wiener theorem to the limit.
See Theorems \ref{th-IsomorphismNonCompact2}, \ref{th-CknSurjective}
and \ref{th-PWcompactII} below.

The classical Paley--Wiener Theorem describes the growth of the
Fourier transform of a function $f \in C_c^\infty(\R^n)$ in terms of the
size of its support.  Helgason and Gangolli generalized it to riemannian
symmetric spaces of noncompact type, Arthur extended it to semisimple Lie
groups, van den Ban and Schlichtkrull made the extension to pseudo-riemannian
reductive symmetric spaces, and finally \'Olafsson and Schlichtkrull
worked out the corresponding result for compact riemannian symmetric spaces.
Here we extend these results to a class of infinite dimensional
riemannian symmetric spaces, the classical direct limits compact symmetric
spaces.  The main idea is to combine the results of \'Olafsson and Schlichtkrull
with Wolf's results on direct limits
$\varinjlim M_n$ of riemannian symmetric spaces and limits of the
corresponding function spaces on the $M_n$.

Of course compact support in the Paley--Wiener Theorem is
irrelevant for functions on a compact
symmetric space. There one concentrates on the radius of the support.
The Fourier transform space is interpreted as the parameter space for
spherical functions. It is linear dual space of the complex span of the
restricted roots.  When we pass to direct limits it is crucial that
these ingredients be properly normalized.  In order to do this
we introduce the notion of propagation for pairs of root systems, pairs
of groups, and pairs of symmetric spaces.

In Section \ref{sec1} we recall some basic facts concerning Paley--Wiener
theorems on Euclidean spaces and their behavior under the action of
finite symmetry groups.  In this setting we give surjectivity criteria for
restriction of Paley--Wiener spaces.

In Section \ref{sec2} we discuss the structural results, both for symmetric
spaces of compact type and of noncompact type, that we will need later.
In order to do this we recall our notion of propagation from \cite{OW10}
and examine the corresponding Weyl group invariants explicitly for each
type of root system. The key there is the main result of \cite{OW10}, which
summarizes the facts on restriction of Weyl groups for
propagation of symmetric spaces.

In Section \ref{sec3} we apply our results on Weyl group invariants to
Fourier analysis on riemannian symmetric spaces of noncompact type.  The
main result is Theorem \ref{th-ProjLimNonCompact}, the Paley--Wiener
Theorem for classical direct limits of those spaces.
As indicated earlier, a $\Z_2$ extension of the Weyl group is needed in
case of root systems of type $D$. The extension can be realized by
an automorphism $\sigma$ of the of the Dynkin diagram. We show that
there exists an automorphism $\widetilde{\sigma}$ of $G$ or a double cover
such that $d\widetilde{\sigma}|_\fa =\sigma$ and the spherical function
with spectral parameter $\lambda$ satisfies
$\varphi_\lambda (\widetilde{\sigma}(x))
=\varphi_{\sigma'(\lambda )}(x)$.

In Section \ref{sec4} we set up the basic surjectivity of the direct
limit Paley--Wiener Theorem for the classical sequences $\{SU(n)\}$,
$\{SO(2n)\}$, $\{SO(2n+1)\}$ and $\{Sp(2n)\}$.  The key tool is
Theorem \ref{inj-radius}, the calculation of the injectivity radius.
That radius
turns out to be a simple constant ($\sqrt{2}\,\pi$ or $2\pi$) for each
of the series.  The main result is Theorem \ref{projsys}, which
sets up the projective systems of functions used in the Paley--Wiener
Theorem for $SU(\infty)$, $SO(\infty)$ and $Sp(\infty)$.  All this is needed
when we go to limits of symmetric spaces.

In Section \ref{sec5} we examine limits of spherical representations of
compact symmetric spaces.  Theorem \ref{l-inductiveSystemOfRep} is
the main result.  It
sets up the sequence of function spaces corresponding to a direct system
$\{M_n\}$ of compact riemannian symmetric spaces in which $M_k$ propagates
$M_n$ for $k \geqq n$. We use this in Section \ref{sec6} to show that
a certain surjective map $Q: C^\infty (G)^G\to C^\infty (G/K)^K$
is in fact surjective as a map $C^\infty_r(G)^G \to C_r^\infty (G/K)^K$.
Here $Q(f)(xK):=\int_K f(xk)\, dk$ and the subscript ${}_r$ denotes the
size of the support.

Then in Section \ref{sec6}, we relate the spherical Fourier transforms
for the sequence $\{M_n\}$, show how the injectivity radii remain constant
on the sequence.  We then prove the Paley--Wiener Theorem \ref{t: PW}
for compact symmetric spaces in a form that is applicable to direct limits
$M_\infty = \varinjlim M_n$ of compact riemannian symmetric spaces in which
$M_k$ propagates $M_n$ for $k \geqq n$.  Along the way we obtain a stronger
form, Theorem \ref{stronger}, of one of the key ingredients in the proof
of the surjectivity.

Finally in Section \ref{sec7} we introduce and discuss a
$K$--invariant domain in $M$ that behaves well under propagation.
This leads to a corresponding restriction theorem,
Theorem \ref{th-PWcompactII}, and another result of Paley--Wiener type,
Theorem \ref{th-ProjLimCompact}.

Our discussion of direct limit Paley--Wiener Theorems involves function
space maps that have a somewhat indirect relation \cite{W2009}
to the $L^2$ theory of \cite{W2010}.  This is
discussed in Section \ref{sec8}, where we compare our maps with the
partial isometries of \cite{W2010}.

\section{Polynomial Invariants and Restriction of
Paley-Wiener spaces}\label{sec1}\setcounter{equation}{0}

\noindent
In this section we recall and refine some results of Cowling and Rais
that will be used later in this article.

Let $E\cong \R^n$ be a finite
dimensional Euclidean space.  Let $\langle x,y\rangle_E =
\langle x,y\rangle =x\cdot y$ denote the inner
product on $E$ and its $\C$--bilinear extension to the complexification
$E_\C\cong \C^n$.
Let $|\, \cdot \, |$ denote the corresponding norm on $E$ and $E_\C$. Note that $\langle \,\cdot \, ,\, \cdot \, \rangle$ defines an bilinear form and a norm on $E^*$ and $E^*_\C$.

Denote by
$C_r^\infty ( E )$ the space of smooth functions on
$E$ with support in a closed ball $\Brc$ of radius $r>0$.  Write
$\PW_r(E_\C^*)$ for the space of holomorphic function on $E_\C^*$ with
the property that for each $n\in\Z^+$ there exists
a constant $C_n>0$ such that
\begin{equation}\label{eq-DefPW}
\nu_n(F):=\sup_{\lambda \in E_\C} (1+|\lambda |^2)^{n}e^{-r|\Im \lambda |}|F(\lambda )|<\infty\, .
\end{equation}

Consider a $G$-module $V$.  The action on functions is given as usual by
$L_wf(v):= f(w^{-1}v)$ and we denote the fixed point set by
\begin{equation}\label{eq-invariants}
V^G=\{v\in V\mid g\cdot v=v\text{ for all } g\in G\}\, .
\end{equation}
In particular, given a closed subgroup $G \subset O(E)$, the spaces
$\PW_r(E_\C^*)^G$ and $C_r^\infty (E)^G$ are well defined.
We normalize the Fourier transform on $E$ as
\begin{equation}\label{eq-FourierTransform}
\cF_E(f)(\lambda )=\widehat{f}(\lambda )
=(2\pi )^{- n/2}\int_E f(x)e^{-i\lambda ( x )}\, dx\, , \quad \lambda \in E_\C^*\text{ and } n = \dim E.
\end{equation}
The Paley--Wiener Theorem says that
$\mathcal{F}_E :C_r^\infty (E)^G\to \PW_r(E_\C^*)^G$
is an isomorphism.

{}From now on we assume that $F$ is another
Euclidean space and that $E\subseteqq F$.
We always assume that the inner products on $E$ and
$F$ are chosen so that $\langle x,y\rangle_E=\langle x,y\rangle_F$
for all $x,y\in E$. Furthermore, if
$W(E)$ and $W(F)$ are closed subgroups of the respective orthogonal groups
acting on $E$ and $F$, then set
\[W_E(F)=\{w\in W(F)\mid w(E)=E\}\, .\]
We always assume that
$W(E)$ and $W(F)$ are generated by reflections $s_\alpha : v\mapsto
v-\frac{2\alpha (v)}{\langle \alpha ,\alpha \rangle }h_\alpha$, for $\alpha$ in a root system
in $E^*$ (respectively $F^*$). However the Cowling result
below holds for arbitrary closed subgroup of $\mathrm{O} (E)$ (respectively $\mathrm{O} (F)$).

\begin{theorem}[Cowling]\label{th-cowling} The restriction map
$\PW_r(F_\C^*)^{W_E(F)} \to \PW_r(E_\C^*)^{W_E (F)|_{E_\C}}$, given by
$F\mapsto F|_{E_\C^*}$, is surjective.
\end{theorem}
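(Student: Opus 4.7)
The surjectivity statement preserves the exponential type $r$ exactly, so the crux is to construct an extension whose support radius does not inflate. My strategy is to move to the pre-Fourier side, build the extension there by an explicit slicing formula, and transport back. By the Euclidean Paley--Wiener theorem applied to $E$, write $f = \cF_E(\phi)$ for a unique $\phi \in C_r^\infty(E)^{W_E(F)|_E}$. Each derivative $\phi^{(\beta)}$ vanishes on an open neighborhood of $\{|x_E|>r\}$, so by continuity $\phi^{(\beta)}(x_0) = 0$ at every $|x_0| = r$. Taylor's theorem with remainder, applied around each boundary point and made uniform via compactness of the boundary sphere, then yields
\[
|\phi^{(\beta)}(x_E)| \leqq C_{\beta,N} (r - |x_E|)^N \qquad \text{for all } \beta, N,
\]
uniformly in $x_E$ with $|x_E| < r$ near the boundary. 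This infinite-order vanishing is the regularity input that permits an extension to $F$ of exactly the same support radius $r$.

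Fix a smooth radial bump $\chi \in C_c^\infty(E^\perp)$ with $\Supp \chi$ contained in the unit ball and $\int \chi = 1$; radiality makes $\chi$ invariant under the action of $W_E(F)|_{E^\perp}$. Decompose $F = E \oplus E^\perp$ orthogonally and set $s(x_E) := \sqrt{r^2 - |x_E|^2}$, $k := \dim E^\perp$. Define
\[
\Phi(x_E, x_{E^\perp}) := \begin{cases}
(2\pi)^{k/2}\, \phi(x_E)\, s(x_E)^{-k}\, \chi\bigl(x_{E^\perp}/s(x_E)\bigr), & |x_E| < r, \\
0, & |x_E| \geqq r.
\end{cases}
\]
The support condition $|x_{E^\perp}/s(x_E)| \leqq 1$ inherited from $\chi$ is exactly $|x_E|^2 + |x_{E^\perp}|^2 \leqq r^2$, so $\Supp \Phi$ lies in the closed $r$-ball of $F$; and $W_E(F)$-invariance is immediate from the invariance of $\phi$ and $\chi$ together with the fact that $W_E(F)$ preserves the norms on $E$ and $E^\perp$ separately. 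Smoothness of $\Phi$ across the degenerating slice $|x_E| = r$ is the main technical point: any partial derivative $\partial^\alpha \Phi$ on $\{|x_E| < r\}$ expands by Leibniz into finitely many terms of the form $\phi^{(\beta)}(x_E) \, s^{-N(\alpha)} \, \chi^{(\gamma)}(x_{E^\perp}/s)$ with $\chi^{(\gamma)}$ bounded, and the infinite-order vanishing of $\phi^{(\beta)}$ obtained above beats any fixed negative power of $s$. Hence every such derivative extends continuously by $0$ to $\{|x_E| \geqq r\}$, and $\Phi \in C_r^\infty(F)^{W_E(F)}$.

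A Fubini computation gives $\int_{E^\perp} \Phi(x_E, x_{E^\perp}) \, dx_{E^\perp} = (2\pi)^{k/2} \phi(x_E)$, and evaluating $\cF_F \Phi$ at $\lambda \in E_\C^* \hookrightarrow F_\C^*$ (extended by zero on $E^\perp$) gives, using $\dim F = \dim E + k$,
\[
(\cF_F \Phi)(\lambda) = (2\pi)^{(-\dim F + k + \dim E)/2} \cF_E(\phi)(\lambda) = f(\lambda).
\]
The Paley--Wiener theorem on $F$ places $\cF_F \Phi \in \PW_r(F_\C^*)^{W_E(F)}$, so this is the desired preimage of $f$ and surjectivity follows. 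The principal obstacle is the smoothness check in the second paragraph: without the infinite-order vanishing of $\phi$ on the boundary sphere, the factor $s^{-k}$ would be genuinely singular. Spectral-side alternatives, for instance multiplying $f(\lambda|_E)$ by a small bump in the $\lambda_{E^\perp}$ direction, inflate the exponential type by a positive $\varepsilon$ and only land in $\PW_{r+\varepsilon}$, not $\PW_r$; so working on the pre-image side appears essential to meet the radius $r$ exactly.
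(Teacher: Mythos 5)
The paper does not prove this statement: it is quoted from Cowling's note \cite{cowling}, so there is no in-house argument to compare against. Your proof is correct, and it is essentially the known argument: pass to the pre-Fourier side, use that a smooth function supported in the closed $r$-ball of $E$ vanishes to infinite order on the boundary sphere, and fiber over $E$ a bump in $E^\perp$ whose support shrinks like $s(x_E)=\sqrt{r^2-|x_E|^2}$, so the extension lives in the closed $r$-ball of $F$ and integrates out to $\phi$; the normalization $(2\pi)^{k/2}$ and the identity $\cF_F\Phi|_{E_\C^*}=\cF_E\phi=f$ are exact with the convention (\ref{eq-FourierTransform}), and $W_E(F)$-invariance follows since that group is orthogonal and preserves $E$, hence $E^\perp$ — note this uses no finiteness or reflection structure, consistent with the paper's remark that the result holds for arbitrary closed subgroups of $\rO(F)$.

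Two points would deserve one more line each in a written version. First, the Leibniz/chain-rule terms are not literally of the form $\phi^{(\beta)}(x_E)\,s^{-N}\chi^{(\gamma)}(x_{E^\perp}/s)$: differentiating $s^{-k}$ and $\chi(x_{E^\perp}/s)$ in the $E$-directions also produces polynomial factors in $x_E$ and in $x_{E^\perp}/s$; these are bounded on the support ($|x_E|\leqq r$, $|x_{E^\perp}/s|\leqq 1$), and since $s^{-1}\leqq r^{-1/2}(r-|x_E|)^{-1/2}$ the infinite-order vanishing of $\phi^{(\beta)}$ still dominates every term, so the estimate stands as you intended. Second, the passage from ``every $\partial^\alpha\Phi$ computed on $\{|x_E|<r\}$ extends continuously by $0$ across $\{|x_E|=r\}$'' to ``$\Phi\in C^\infty(F)$'' is the standard extension lemma (the same induction via the mean value theorem used for $e^{-1/t}$); it should at least be named or cited, since it is the hinge of the whole construction. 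With those remarks the argument is complete and meets the radius $r$ exactly, which, as you note, a spectral-side cutoff cannot do.
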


Denote by $\rS (E)$ the symmetric algebra of $E$.  It can be identified with the algebra of polynomial functions on $E^*$. We use similar notation for $F^*$.

\begin{theorem}[Rais]\label{th-rais}
Let $P_1,\ldots ,P_n$ be a basis for $\rS (F)$ over
$\rS(F)^{W(F)}$. If $F\in\PW_r(F_\C^*)$
there exist  $\Phi_1,\ldots ,\Phi_n\in\PW_r(F_\C^*)^{W(F)}$ such
that
$$F=P_1\Phi_1+ \ldots + P_n\Phi_n\, .$$
\end{theorem}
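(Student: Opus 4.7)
The plan is to use Chevalley's freeness theorem to set up a $|W(F)| \times |W(F)|$ linear system for the $\Phi_i$, solve it by Cramer's rule, and verify the analytic properties of the solution. By Chevalley's theorem, $n = |W(F)|$; positing a decomposition $F(\lambda) = \sum_i P_i(\lambda)\Phi_i(\lambda)$ with $W(F)$-invariant $\Phi_i$ and substituting $w\lambda$ for $\lambda$ yields
\begin{equation*}
F(w\lambda) = \sum_{i=1}^n P_i(w\lambda)\,\Phi_i(\lambda), \qquad w \in W(F),
\end{equation*}
with coefficient matrix $M(\lambda) = (P_i(w\lambda))_{w,i}$. Since $\{P_i\}$ is a basis of $\rS(F)$ over $\rS(F)^{W(F)}$, a Galois-theoretic argument (the evaluation map on a generic $W(F)$-orbit is an isomorphism) shows $M(\lambda)$ is invertible precisely off the reflection arrangement $\{J = 0\}$, where $J(\lambda) = \prod_{\alpha > 0}\alpha(\lambda)$ is the Jacobian, and $\det M$ is a nonzero constant multiple of a fixed power of $J$.

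Cramer's rule then expresses $\Phi_i(\lambda) = N_i(\lambda)/\det M(\lambda)$, where $N_i$ is a polynomial combination of the shifted values $F(w\lambda)$. Because $W(F) \subset \mathrm{O}(F)$ preserves $|\Im \lambda|$, each $F(w\lambda)$ lies in $\PW_r(F_\C^*)$, so $N_i \in \PW_r(F_\C^*)$. The quotient $\Phi_i$ is $W(F)$-invariant by uniqueness of the solution, since the substitution $\lambda \mapsto w_0\lambda$ only permutes the rows of the system. Once $\Phi_i$ is known to be entire, division of a $\PW_r$-function by a polynomial (with entire quotient) preserves the exponential type $r$ and only worsens the polynomial prefactor in the seminorms $\nu_n$ of (\ref{eq-DefPW}); consequently $\Phi_i \in \PW_r(F_\C^*)^{W(F)}$ will follow automatically.

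The main obstacle is proving that $\Phi_i$ is entire. The ratio $N_i/\det M$ has apparent poles along each reflection hyperplane $\{\alpha = 0\}$ of order equal to the multiplicity of $\alpha$ in $\det M$; when this multiplicity exceeds one, $W(F)$-invariance alone does not cancel them. The needed compensating vanishing of $N_i$ along $\{\alpha = 0\}$ is a structural consequence of the Vandermonde form of $M$ — for polynomial $F$ it is precisely Chevalley's decomposition theorem applied inside $\rS(F)$, and the passage to $\PW_r(F_\C^*)$ is the analytic core of the argument. An equivalent route, which bypasses Cramer's rule entirely, is to write down the explicit Steinberg-type formula
\begin{equation*}
\Phi_i(\lambda) = \frac{1}{|W(F)|}\sum_{w \in W(F)} Q_i(w\lambda)\,F(w\lambda)
\end{equation*}
for an appropriate dual basis $\{Q_i\}$ to $\{P_i\}$ in the coinvariant algebra; this expression is manifestly entire, $W(F)$-invariant, and of exponential type $r$, and the identity $F = \sum_i P_i \Phi_i$ reduces to a reproducing-kernel identity $\sum_i P_i(\lambda) Q_i(\mu) = |W(F)|\,\delta_{\mu,\lambda}$ on the $W(F)$-orbit of $\lambda$.
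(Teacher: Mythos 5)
The paper does not prove this result; it is quoted from Rais, so your proposal has to stand on its own, and it does not: the crucial step is missing in the first route and the proposed repair in the second route is false. In the Cramer-rule approach, $\det M(\lambda)=(P_i(w\lambda))_{w,i}$ is (up to a nonzero constant) $J(\lambda)^{|W(F)|/2}$, so it vanishes to order $|W(F)|/2$ along each reflection hyperplane; the entire content of the theorem is that each numerator $N_i$ vanishes to at least that order, so that $\Phi_i=N_i/\det M$ is entire. You concede this yourself ("the analytic core"), but you offer no argument: Chevalley's theorem supplies the cancellation only when $F$ is a polynomial, and a general $F\in\PW_r(F_\C^*)$ is not a finite $\rS(F)^{W(F)}$-combination of the $P_i$, so no algebraic identity is available. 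This is exactly where Rais's actual proof (and its Paley--Wiener variant) has to invoke division theory for smooth/holomorphic functions rather than linear algebra on a generic orbit. (The one analytic step you do cite correctly is harmless: once entirety is known, division of a $\PW_r$ function by a polynomial preserves type $r$, as in Helgason's Lemma 5.13 used elsewhere in the paper.)

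The "equivalent route" does not exist as stated: there is no polynomial dual system $\{Q_i\}$ with $\sum_i P_i(\lambda)\,Q_i(w\lambda)=|W(F)|\,\delta_{w,e}$. Already for $W=\{\pm 1\}$ acting on $\R$ with $P_1=1$, $P_2=\lambda$, the two required identities force $Q_1\equiv 1$ and $Q_2(\lambda)=1/\lambda$, which is not a polynomial; correspondingly the true solution is
\begin{equation*}
\Phi_1(\lambda)=\tfrac12\bigl(F(\lambda)+F(-\lambda)\bigr),\qquad
\Phi_2(\lambda)=\tfrac{1}{2\lambda}\bigl(F(\lambda)-F(-\lambda)\bigr),
\end{equation*}
so the $Q_i$ are unavoidably rational and the formula $\Phi_i(\lambda)=\frac{1}{|W(F)|}\sum_w Q_i(w\lambda)F(w\lambda)$ is \emph{not} manifestly entire: its holomorphy across the hyperplanes rests on precisely the cancellation (vanishing of the antisymmetrized numerator to high order) that you were trying to avoid proving. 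So both routes leave the theorem's essential difficulty unaddressed, and the second rests on a reproducing-kernel identity that fails even in rank one.
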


If $W_E(F)|_E=W(E)$ then Cowling's Theorem implies that the restriction map
\[\PW_r(F_\C^*)^{W_E(F)} \to \PW_r(E_\C^*)^{W (E)}\, ,\quad F\mapsto F|_{E_\C^*}\, ,\]
is surjective, but in general $\PW_r(F_\C^*)^{W(F)}$ is smaller than $\PW_r(F_\C^*)^{W_E(F)}$, so one would in general not expect the restriction map to remain surjective. The following
theorem gives a sufficient condition for that to happen.

\begin{theorem}\label{th-IsoPW1} Let the notation be as above. Assume that
$W_E(F)|_E =W(E)$ and that the restriction map $\rS(F)^{W(F)} \to \rS(E)^{W(E)}$ is surjective.
Then the restriction map
$$
\PW_r(F_\C^*)^{W(F)}\to \PW_r(E_\C^*)^{W(E)}\, \text{, given by }
F\mapsto F|_{E_\C^*}\, ,
$$
is surjective.
\end{theorem}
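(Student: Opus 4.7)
The plan is to combine Cowling's lifting result (Theorem \ref{th-cowling}) with the Rais decomposition (Theorem \ref{th-rais}), using the polynomial surjectivity hypothesis to upgrade $W_E(F)$--invariance to full $W(F)$--invariance.

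Given $f\in \PW_r(E_\C^*)^{W(E)}$, the first step is to apply Cowling's theorem to produce $\widetilde{F}\in \PW_r(F_\C^*)^{W_E(F)}$ with $\widetilde{F}|_{E_\C^*}=f$; this is legal because, by hypothesis, $W_E(F)|_E=W(E)$, so $f$ is $W_E(F)|_E$--invariant.

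Next, apply Rais's theorem to $\widetilde{F}$ to write $\widetilde{F}=\sum_{j=1}^n P_j\Phi_j$ with $P_1,\ldots,P_n$ a basis of $\rS(F)$ over $\rS(F)^{W(F)}$ and $\Phi_j\in \PW_r(F_\C^*)^{W(F)}$. Since $W_E(F)\subseteq W(F)$, each $\Phi_j$ is also $W_E(F)$--invariant, so averaging both sides over the finite group $W_E(F)$ (and using $L_w(P_j\Phi_j)=(L_wP_j)\Phi_j$) yields
\[
\widetilde{F}=\sum_{j=1}^n P_j'\,\Phi_j,\qquad P_j':=\frac{1}{|W_E(F)|}\sum_{w\in W_E(F)}L_wP_j\;\in\; \rS(F)^{W_E(F)}.
\]
Restricting to $E_\C^*$ gives $f=\sum_j (P_j'|_{E_\C^*})(\Phi_j|_{E_\C^*})$, where $P_j'|_{E_\C^*}\in \rS(E)^{W(E)}$ (by $W_E(F)|_E=W(E)$) and $\Phi_j|_{E_\C^*}\in \PW_r(E_\C^*)^{W(E)}$.

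The final step is to invoke the hypothesis that $\rS(F)^{W(F)}\to \rS(E)^{W(E)}$ is surjective: for each $j$ choose $\widetilde{P}_j\in \rS(F)^{W(F)}$ with $\widetilde{P}_j|_{E_\C^*}=P_j'|_{E_\C^*}$, and define $F:=\sum_{j=1}^n \widetilde{P}_j\Phi_j$. Then $F\in \PW_r(F_\C^*)^{W(F)}$ because both factors are $W(F)$--invariant and multiplication by a polynomial preserves $\PW_r$, and $F|_{E_\C^*}=f$ by construction. The one subtlety worth flagging is that the Rais decomposition is not itself $W_E(F)$--equivariant for the fixed basis $\{P_j\}$, which is exactly why the intermediate averaging step is needed; once the coefficients have been replaced by $W_E(F)$--invariant polynomials, the polynomial surjectivity hypothesis finishes the argument without further analytic input.
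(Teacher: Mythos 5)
Your proof is correct and follows essentially the same route as the paper's: lift via Cowling's theorem to a $W_E(F)$--invariant function, decompose via Rais's theorem with $W(F)$--invariant coefficients $\Phi_j$, average the polynomial factors over $W_E(F)$, and then use the surjectivity of $\rS(F)^{W(F)}\to\rS(E)^{W(E)}$ to replace them by fully $W(F)$--invariant polynomials. The explicit justification of the averaging step (using $W_E(F)$--invariance of $\widetilde F$ and the $\Phi_j$) is exactly the step the paper performs more tersely.
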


\begin{proof} It is clear that if $F\in\PW_r(F_\C^*)^{W(F)}$ then
$F|_{E_\C^*}\in \PW_r(E_\C^*)^{W(E)}$.
For the surjectivity let $G\in \PW_r(E_\C^*)^{W(E)}$. By Theorem \ref{th-cowling} and
our assumption on the reflection groups there
exists a function $\widetilde{G}\in \PW_r(F_\C^*)^{W_E(F)}$ such that
$\widetilde{G}|_{E_\C^*}=G$. By Theorem \ref{th-rais}, there exist
$\Phi_1,\ldots ,\Phi_n\in \PW_r(F_\C^*)^{W(F)}$ and
polynomials $P_1,\ldots ,P_n\in \rS (F)$ such that
$\widetilde G =P_1\Phi_1+\ldots + P_n\Phi_n$ and
$G=\widetilde{G}|_{E_\C^*}=(P_1|_{E_\C^*})(\Phi_1|_{E_\C^*})+\ldots + (P_n|_{E_\C^*})(\Phi_n|_{E_\C^*})$.
As $W (E) =W_E(F)|_E$, $G$ is $W (E)$--invariant and the functions $\Phi_j$ are $W(F)$--invariant,
we can average the polynomials $P_j$ over $W_E(F)$ and thus assume that $P_j|_{E_\C^*} \in \rS (E)^{W (E)}$.
But then there exists $Q_j\in \rS (F)^{W(F)}$ such that $Q_j|_{E_\C^*}=P_j|_{E_\C^*}$. Let
$\Phi:=Q_1\Phi_1+\ldots +Q_r\Phi_r$. Then $\Phi\in \PW_r(F_\C^*)^{W(F)}$ and $\Phi|_{E_\C^*}=G$.
Hence the restriction map is surjective.
\end{proof}

Let $n=\dim E$ and $m=\dim F$. Denote by $\cF_E$ respectively $\cF_F$ the
Euclidean Fourier transforms on $E$ and $F$.
The following map $C$ was denoted by $P$ in \cite{cowling}.

\begin{corollary}[Cowling]\label{co-Cowling} Let the assumptions be as above.
Then the  map
$$
C: C_r^\infty (F)^{W(F)}\to C_r^\infty (E)^{W(E)}\, ,\text{ given by }
C f(x)=\int_{E^\perp} f(x,y)\, dy,
$$
is surjective.
\end{corollary}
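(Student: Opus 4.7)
The plan is to interpret $C$ on the Fourier transform side as (up to a constant) the restriction map of Theorem \ref{th-IsoPW1}, and then read off surjectivity of $C$ from that theorem combined with the Euclidean Paley--Wiener theorem.

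First I would check that $C$ is well defined as a map $C_r^\infty(F)^{W(F)} \to C_r^\infty(E)^{W(E)}$. Smoothness of $Cf$ is standard differentiation under the integral. For the support, if $\mathrm{supp}(f)\subset \overline{B_r(0)}\subset F$ then $(x,y)\in\mathrm{supp}(f)$ forces $|x|^2+|y|^2\leqq r^2$, so $Cf(x)=0$ whenever $|x|>r$. For $W(E)$-invariance, any $w\in W_E(F)$ is orthogonal on $F$ and preserves $E$, hence also preserves $E^\perp$; writing $w=(w_1,w_2)$ with $w_1\in\rO(E)$, $w_2\in\rO(E^\perp)$, an orthogonal change of variable in $y$ together with $W(F)$-invariance of $f$ gives $Cf(w_1x)=Cf(x)$. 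Since by hypothesis $W_E(F)|_E=W(E)$, the function $Cf$ is $W(E)$-invariant.

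Next I would establish the Fourier-theoretic identity: for $\xi\in E_\C^*$, identifying $F^*=E^*\oplus (E^\perp)^*$ and writing $(\xi,0)$ for the vector that is $\xi$ on $E$ and zero on $E^\perp$, a Fubini computation with the normalization in \eqref{eq-FourierTransform} yields
\[
\cF_F(f)(\xi,0) = (2\pi)^{(n-m)/2}\,\cF_E(Cf)(\xi),
\]
where $n=\dim E$ and $m=\dim F$. In other words, $\cF_E\circ C$ equals (up to the explicit constant $(2\pi)^{(m-n)/2}$) the restriction map $\PW_r(F_\C^*)^{W(F)}\to\PW_r(E_\C^*)^{W(E)}$ composed with $\cF_F$.

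Given this diagram, surjectivity of $C$ becomes immediate. Take $g\in C_r^\infty(E)^{W(E)}$. By the Euclidean Paley--Wiener theorem, $\cF_E(g)\in\PW_r(E_\C^*)^{W(E)}$. The hypotheses $W_E(F)|_E=W(E)$ and surjectivity of $\rS(F)^{W(F)}\to\rS(E)^{W(E)}$ are exactly the hypotheses of Theorem \ref{th-IsoPW1}, so there exists $\Phi\in\PW_r(F_\C^*)^{W(F)}$ with $\Phi|_{E_\C^*}=(2\pi)^{(n-m)/2}\cF_E(g)$. Let $f:=\cF_F^{-1}(\Phi)$; Paley--Wiener on $F$ puts $f\in C_r^\infty(F)^{W(F)}$, and the Fourier identity of the previous paragraph gives $\cF_E(Cf)=\cF_E(g)$, hence $Cf=g$.

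The only nontrivial input is Theorem \ref{th-IsoPW1}, which has already been proved; nothing in the corollary itself is hard. The one thing to be careful about is the bookkeeping of normalization constants in the Fourier identity, and the verification (trivial but worth stating) that elements of $W_E(F)$ act on $E^\perp$ by orthogonal transformations so that the integral defining $C$ really descends to $W(E)$-invariants.
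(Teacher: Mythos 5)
Your proof is correct and follows essentially the same route as the paper's: transfer the problem to the Fourier side via the Fubini identity $\cF_F(f)(\xi,0)=(2\pi)^{(n-m)/2}\cF_E(Cf)(\xi)$, use Theorem \ref{th-IsoPW1} to lift $\cF_E(g)$ (up to the constant) to a $W(F)$--invariant Paley--Wiener function on $F_\C^*$, and pull back by $\cF_F^{-1}$. Your bookkeeping of the normalization constant and the verification that $C$ lands in $C_r^\infty(E)^{W(E)}$ are in fact slightly more careful than the paper's one-line argument.
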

\begin{proof} Let $c=(2\pi )^{(n-m)/2}$. For $g\in C_r^\infty (E)^{W(E)}$ let
$G=\cF_E(g)\in \PW_r(E_\C^*)^{W(E)}$. Choice  $F\in \PW_r(F_\C^*)^{W(F)}$
such that $F|_{E_\C^*}=c^{-1} G$. With $f:=\cF_F^{-1}(G|_F)\in C_r^\infty (F)^{W(E)}$ a simple calculation shows that $C(f)=g$.
\end{proof}

\begin{theorem}\label{th-projectiveLimitPW}
Let $\{E_j\}$ be a sequence of Euclidean spaces,
$E_j\subseteqq E_{j+1}$, that satisfies the hypotheses
of {\rm Theorem \ref{th-IsoPW1}} for
each pair $(E_j,E_k)$, $k\geqq j$.  Denote the restriction maps by
$P_{j}^k :\PW_r(E_{k,\C}^*)^{W(E_{k})}\to
\PW_r (E_{j,\C}^*)^{W(E_j)}$. Then
$\{\PW_r (E_{j,\C}^*)^{W(E_j)}, P^k_j\}$ is a projective system
whose limit
$P^\infty_n :\varprojlim  \PW_r (E_{j,\C}^*)^{W(E_j)}\to \PW_r(E_{n,\C}^*)^{W (E_n)}$ is surjective for all $n$. In particular,
$\varprojlim  \PW_r (E_{j,\C}^*)^{W(E_j)} \not= \{0\}$.
\end{theorem}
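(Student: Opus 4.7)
The plan is to combine the single-step surjectivity of Theorem \ref{th-IsoPW1} with the standard inductive lifting argument for projective limits with surjective transition maps.

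First I would check that $\{\PW_r(E_{j,\C}^*)^{W(E_j)}, P_j^k\}$ is indeed a projective system. The identity $P_j^k \circ P_k^\ell = P_j^\ell$ for $j \leqq k \leqq \ell$ is just transitivity of restriction along $E_{j,\C}^* \subseteq E_{k,\C}^* \subseteq E_{\ell,\C}^*$, and the bound defining $\PW_r$ is preserved because the inner products on the $E_i$ were chosen to be mutually compatible, so $|\lambda|$ and $|\Im\lambda|$ are unchanged when $\lambda \in E_{j,\C}^*$ is viewed inside $E_{k,\C}^*$. The $W(E_j)$-invariance of the restricted function uses the hypothesis $W_{E_j}(E_k)|_{E_j} = W(E_j)$ built into Theorem \ref{th-IsoPW1}.

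Next I would prove surjectivity of $P^\infty_n$. Given $G_n \in \PW_r(E_{n,\C}^*)^{W(E_n)}$, I build a compatible family $(G_j)_{j\geqq 1}$ as follows. For $j \leqq n$ set $G_j := P_j^n(G_n)$. For $k \geqq n$ construct $G_{k+1}$ recursively from $G_k$: the pair $(E_k,E_{k+1})$ satisfies the hypotheses of Theorem \ref{th-IsoPW1}, so $P_k^{k+1}$ is surjective, and I choose $G_{k+1}$ to be any preimage of $G_k$. Transitivity of restriction then yields $P_j^k(G_k) = G_j$ for all $j \leqq k$, so $(G_j)_j$ lies in $\varprojlim \PW_r(E_{j,\C}^*)^{W(E_j)}$ and satisfies $P^\infty_n((G_j)_j) = G_n$.

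For the nonvanishing of the projective limit, I would take any nonzero $G_n \in \PW_r(E_{n,\C}^*)^{W(E_n)}$ --- for instance the Euclidean Fourier transform of a nonzero $W(E_n)$-invariant smooth function supported in $\Brc \subset E_n$ --- and apply the surjectivity just established. The whole argument reduces to the observation that a projective system with surjective transition maps has surjective projection onto every factor, together with dependent choice to select the lifts. All the real work is carried by Theorem \ref{th-IsoPW1}; there is no additional analytic or algebraic obstacle, and no single step is a genuine difficulty beyond verifying that the hypotheses of \ref{th-IsoPW1} transfer, as assumed, to every consecutive pair.
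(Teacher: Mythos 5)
Your proposal is correct and follows essentially the same route as the paper: verify the projective system property and then recursively lift a given (nonzero) element of $\PW_r(E_{n,\C}^*)^{W(E_n)}$ through the surjective one-step restriction maps supplied by Theorem \ref{th-IsoPW1}. The only difference is that you spell out the definition of the components below level $n$ and the source of a nonzero starting element (the Euclidean Paley--Wiener theorem), details the paper leaves implicit.
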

\begin{proof} It is clear that $\{\PW_r (E_{j,\C}^*)^{W(E_j)}, P_{j}^k\}$
is a projective system.  Given $n$ and a nonzero
$F\in \PW_r (E_{n,\C}^*)^{W(E_n)}$, recursively
choose $F_k \in \PW_r(E_{k,\C}^*)^{W(E_k)}$ for $k\geqq n$ such that
$F_{k+1}|_{E_{k,\C}^*}=F_k$.  Then the sequence $\{F_k\}$ is a non-zero element of
$\varprojlim \PW_r (E_{j,\C}^*)^{W(E_j)}$ and $P^\infty_n (\{F_k\})=F$.
\end{proof}

\begin{theorem} Given the conditions of
{\rm Theorem \ref{th-projectiveLimitPW}} define
$C_{j}^k :C^\infty_r(E_{k})^{W(E_{k})}\to C^\infty_r (E_j)^{W(E_j)}$ by
\[[C_{j}^k(f)](x)=\int_{E_j^\perp} f(x,y)\, dy\, .\]
Then the maps $C_{j}^k$ are surjective,
$\{C^\infty_r (E_j)^{W(E_j)}, C^k_j\}$ is a projective system, and its limit
$C^\infty_n :\varprojlim C^\infty_r (E_j)^{W(E_j)}\to C^\infty_r (E_n)^{W(E_n)}$
is surjective for all $n$. In particular, $\varprojlim C^\infty_r (E_j)^{W(E_j)}\not= \{0\}$.
\end{theorem}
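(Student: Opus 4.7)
The plan is to treat this as the function-space counterpart of Theorem \ref{th-projectiveLimitPW}, established either by a direct recursive construction or by transferring that theorem along the Euclidean Paley--Wiener isomorphism. Surjectivity of each individual $C_j^k$ will come directly from Corollary \ref{co-Cowling}: the hypotheses of Theorem \ref{th-projectiveLimitPW} include those of Theorem \ref{th-IsoPW1} for every pair $(E_j,E_k)$ with $j\leqq k$, and that is exactly what Corollary \ref{co-Cowling} requires. The support condition is automatic, since if $(x,y)\in\Brc\subseteqq E_k$ with $x\in E_j$ and $y\in E_j^\perp$, then $|x|^2+|y|^2\leqq r^2$ forces $|x|\leqq r$, so $C_j^k$ really lands in $C^\infty_r(E_j)^{W(E_j)}$.

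Next I verify that $\{C^\infty_r(E_j)^{W(E_j)}, C_j^k\}$ is a projective system. For $j\leqq k\leqq \ell$, the orthogonal complement of $E_j$ inside $E_\ell$ decomposes as the orthogonal direct sum of $E_j^\perp\cap E_k$ and $E_k^\perp\cap E_\ell$, so Fubini yields $C_j^k\circ C_k^\ell = C_j^\ell$. With the composition identities in hand, the construction at infinity is routine: given a nonzero $F\in C^\infty_r(E_n)^{W(E_n)}$, set $F_k:=C_k^n(F)$ for $k\leqq n$, and use surjectivity of $C_k^{k+1}$ to recursively choose $F_{k+1}\in C^\infty_r(E_{k+1})^{W(E_{k+1})}$ with $C_k^{k+1}(F_{k+1})=F_k$ for $k\geqq n$. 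The resulting compatible sequence $\{F_k\}$ lies in $\varprojlim C^\infty_r(E_j)^{W(E_j)}$ and maps to $F$ under $C^\infty_n$, proving both surjectivity and nontriviality of the inverse limit.

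I do not expect a real obstacle; the only delicate point is the support bookkeeping in the first step, which is immediate. As a conceptual shortcut one may instead observe that the Euclidean Fourier transforms intertwine $C_j^k$ with $P_j^k$ up to the constant $c=(2\pi)^{(\dim E_j-\dim E_k)/2}$ appearing in the proof of Corollary \ref{co-Cowling}. Absorbing $c$ into the restriction maps realizes $\{C^\infty_r(E_j)^{W(E_j)},C_j^k\}$ as a projective system isomorphic to $\{\PW_r(E_{j,\C}^*)^{W(E_j)},P_j^k\}$, at which point the desired surjectivity of $C^\infty_n$ is immediate from Theorem \ref{th-projectiveLimitPW}.
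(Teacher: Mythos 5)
Your proposal is correct and follows essentially the same route as the paper, whose entire proof is the remark that the argument of Theorem \ref{th-projectiveLimitPW} goes through verbatim once Corollary \ref{co-Cowling} supplies surjectivity of each $C_j^k$. You merely make explicit the details the paper leaves tacit (the support bookkeeping, the Fubini verification that $C_j^k\circ C_k^\ell=C_j^\ell$, and the recursive lifting), and your Fourier-transform shortcut matches the intertwining relation $\cF_n\circ C^k_n = P^k_n\circ \cF_k$ (up to the constant you note) that the paper records immediately afterwards.
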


\begin{proof} The proof is the same as that of Theorem
\ref{th-projectiveLimitPW}, making use of Corollary \ref{co-Cowling}.
\end{proof}

\begin{remark} The last two theorems remain valid if the assumptions holds for
a cofinite subsequence of $\{E_j\}_{j\in J}$. \hfill $\diamondsuit$
\end{remark}

The elements in $\varprojlim \PW_r (E_{j,\C}^*)^{W(E_j)}$ can be viewed as
functions on the injective limit $E_\infty^*=\varinjlim E_j^*=\bigcup E_j^*$.
To see that let $F\in \varprojlim \PW_r (E_{j,\C}^*)^{W(E_j)}$ and
$v\in E_\infty$. Let $n$ be such that $v\in E_n$ and define
$F(v):=P^\infty_n (F)(v)$. The definition is clearly independent of $n$.
 Finally, as the Fourier transform
$\cF_j : C^\infty_r (E_j)^{W(E_j)}\to \PW_r (E_{j,\C}^*)^{W(E_j)}$ is an
isomorphism on each level and for
$k\geqq n$ $\cF_n\circ C^k_n = P^k_n\circ \cF_k$, we get the following theorem:

\begin{theorem} There exists an unique isomorphism
\[\cF_\infty : \varprojlim C^\infty_r (E_j)^{W(E_j)}\to \varprojlim \PW_r (E_{j,\C}^*)^{W(E_j)}\]
such that for all $n$ we have $\cF_n \circ C^\infty_n=P^\infty_n\circ \cF_\infty$.
\end{theorem}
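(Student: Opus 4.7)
The plan is to obtain $\cF_\infty$ directly from the universal property of the projective limit on the right-hand side. The crucial input has already been recorded in the paragraph preceding the theorem, namely the intertwining identity $\cF_n\circ C^k_n = P^k_n\circ \cF_k$ for all $k\geqq n$. This identity says precisely that the family $\{\cF_j\}$ is a morphism of projective systems from $\{C^\infty_r(E_j)^{W(E_j)}, C^k_j\}$ to $\{\PW_r(E_{j,\C}^*)^{W(E_j)}, P^k_j\}$, and from this everything will follow formally.

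Concretely, given $\{f_j\}\in \varprojlim C^\infty_r(E_j)^{W(E_j)}$, I would set $\cF_\infty(\{f_j\}) := \{\cF_j(f_j)\}$. The intertwining identity gives $P^k_n(\cF_k(f_k)) = \cF_n(C^k_n(f_k)) = \cF_n(f_n)$, so $\{\cF_j(f_j)\}$ indeed lies in $\varprojlim \PW_r(E_{j,\C}^*)^{W(E_j)}$. By construction $P^\infty_n\circ \cF_\infty$ sends $\{f_j\}$ to $\cF_n(f_n) = \cF_n(C^\infty_n(\{f_j\}))$, which is the compatibility the statement demands. Uniqueness is automatic: any map into a projective limit is determined by its composition with each projection $P^\infty_n$, and here that composition is forced to equal $\cF_n\circ C^\infty_n$.

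For the isomorphism claim I would run the same construction in reverse. Each $\cF_j$ is an isomorphism by the classical Paley--Wiener theorem recalled at the start of the section; applying $\cF_n^{-1}$ on the left and $\cF_k^{-1}$ on the right to the intertwining relation yields $C^k_n\circ \cF_k^{-1} = \cF_n^{-1}\circ P^k_n$, so $\{\cF_j^{-1}\}$ is a morphism of projective systems in the opposite direction. It induces a map on the inverse limits which, by the uniqueness part of the universal property applied in both directions, is a two-sided inverse to $\cF_\infty$.

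I do not anticipate any serious obstacle: the argument is a formal consequence of the universal property of projective limits once componentwise compatibility is in hand. The only point that merits any care is the normalization constant of the Fourier transforms (compare the factor $c = (2\pi)^{(n-m)/2}$ in Corollary \ref{co-Cowling}), but this has already been absorbed into the statement of the intertwining identity preceding the theorem, so nothing further is required here.
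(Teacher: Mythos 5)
Your proposal is correct and is essentially the paper's own argument: the paper derives the theorem directly from the remark preceding it, namely that each $\cF_j$ is an isomorphism and $\cF_n\circ C^k_n = P^k_n\circ \cF_k$, so the componentwise definition $\cF_\infty(\{f_j\}) = \{\cF_j(f_j)\}$ together with the universal property of the projective limit gives existence, uniqueness, and invertibility exactly as you describe.
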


\section{Symmetric Spaces}\label{sec2}
\noindent
In this section we apply the results of Section \ref{sec1} to harmonic
analysis on symmetric spaces of noncompact type. We start with some
general considerations that are valid for symmetric spaces both of
compact and noncompact type.

Let $M=G/K$ be a riemannian symmetric space of compact or noncompact
type. Thus $G$ is a connected semisimple Lie group with an involution
$\theta$ such that
\[(G^\theta)_o\subseteqq K\subseteqq G^\theta\]
where $G^\theta =\{x\in G\mid \theta (x)=x\}$ and the subscript ${}_o$ denotes
the connected component containing the identity element. If
$G$ is simply connected then $G^\theta$ is connected and
$K=G^\theta$. If $G$ is without compact factors and with finite
center, then $K\subset G$ is a \textit{maximal
compact} subgroup of $G$, $K$ is connected, and $G/K$ is simply
connected.

Denote the Lie algebra of $G$ by $\fg$. Then
$\theta$ defines an involution $\theta : \fg\to \fg$ and
$\fg=\fk\oplus \fs$
where $\fk=\{X\in\fg\mid \theta(X)=X\}$ is the Lie algebra of $K$ and
$\fs=\{X\in \fg\mid \theta (X)=-X\}$.

Cartan Duality is a bijection between the classes of simply connected
symmetric spaces of noncompact type and of compact type. On the Lie
algebra level this isomorphism is given by
$\fg = \fk\oplus \fs \leftrightarrow \fk\oplus i\fs = \fg^d$.
We denote this bijection by $M\leftrightarrow M^d$.

Fix a maximal abelian subset $\fa\subset \fs$. For $\alpha \in\fa^*_\C$ let
\[
\fg_{\C,\alpha} =\{X\in\fg_\C \mid [H,X]=\alpha (H)X \text{ for all }
H\in \fa_\C\}\, .
\]
If $\fg_{\C,\alpha}\not=\{0\}$ then $\alpha$ is called a (restricted) root. Denote
by $\Sigma (\fg,\fa)$ the set of roots.  If $M$ is of noncompact type, then
$\Sigma (\fg,\fa)\subset \fa^*$ and
$\fg_{\C,\alpha}=\fg_\alpha +i\fg_\alpha$, where
$\fg_\alpha =\fg_{\C,\alpha}\cap \fg$. If $M$ is of compact type, then the
roots are purely imaginary on $\fa$,
$\Sigma (\fg,\fa)\subset i\fa^*$, and $\fg_{\C,\alpha}\cap \fg=\{0\}$. The
set of roots is preserved under duality, $\Sigma(\fg,\fa )=\Sigma(\fg^d,i\fa)$,
where we view those roots as $\C$--linear functionals on $\ga_\C$.

If $\alpha \in \Sigma (\fg,\fa )$ it can happen that
$\frac{1}{2}\alpha \in\Sigma (\fg,\fa)$ or $2\alpha \in\Sigma (\fg,\fa)$
(but not both). Define
\[\Sigma_{1/2}(\fg,\fa )=\{\alpha \in\Sigma(\fg,\fa ) \mid \tfrac{1}{2 }\alpha\not\in \Sigma (\fg, \fa)\}\, .\]
Then $\Sigma_{1/2}(\fg, \fa )$ is a root system in the usual sense and the Weyl
group corresponding to $\Sigma (\fg, \fa)$ is the same as the Weyl group generated
by the reflections $s_\alpha$, $\alpha \in \Sigma_{1/2}(\fg, \fa)$.
Furthermore, $M$ is
irreducible if and only if $\Sigma_{1/2}(\fg, \fa )$ is irreducible, i.e., can not be
decomposed into two mutually orthogonal root systems.

Let $\Sigma^+(\fg, \fa)\subset \Sigma (\fg, \fa)$ be a positive system and
$\Sigma^+_{1/2}(\fg,\fa )=\Sigma^+ (\fg,\fa )\cap \Sigma_{1/2} (\fg,\fa)$. Then
$\Sigma^+_{1/2}(\fg,\fa )$ is a positive system in
$\Sigma_{1/2} (\fg,\fa)$. Denote
by $\Psi_{1/2} (\fg,\fa)=\{\alpha_1,\ldots ,\alpha_r\}$, $r=\dim \fa$, the set of simple roots in $\Sigma_{1/2}^+(\fg,\fa)$. Then
$\Psi_{1/2} (\fg,\fa )$ is a basis for $\Sigma (\fg,\fa )$. We will always assume that $\Psi_{1/2}$ is not one of the exceptional root system and we number the simple roots in the following way:

\begin{equation}\label{rootorder}
\begin{aligned}
&\begin{tabular}{|c|l|c|}\hline
$\Psi_{1/2}=A_k$&
\setlength{\unitlength}{.5 mm}
\begin{picture}(155,18)
\put(5,2){\circle{2}}
\put(2,5){$\alpha_{k}$}
\put(6,2){\line(1,0){13}}
\put(24,2){\circle*{1}}
\put(27,2){\circle*{1}}
\put(30,2){\circle*{1}}
\put(34,2){\line(1,0){13}}
\put(48,2){\circle{2}}
\put(49,2){\line(1,0){23}}
\put(73,2){\circle{2}}
\put(74,2){\line(1,0){23}}
\put(98,2){\circle{2}}
\put(99,2){\line(1,0){13}}
\put(117,2){\circle*{1}}
\put(120,2){\circle*{1}}
\put(123,2){\circle*{1}}
\put(129,2){\line(1,0){13}}
\put(143,2){\circle{2}}
\put(140,5){$\alpha_1$}
\end{picture}
&$k\geqq 1$
\\
\hline
$\Psi_{1/2}=B_k$&
\setlength{\unitlength}{.5 mm}
\begin{picture}(155,18)
\put(5,2){\circle{2}}
\put(2,5){$\alpha_{k}$}
\put(6,2){\line(1,0){13}}
\put(24,2){\circle*{1}}
\put(27,2){\circle*{1}}
\put(30,2){\circle*{1}}
\put(34,2){\line(1,0){13}}
\put(48,2){\circle{2}}
\put(49,2){\line(1,0){23}}
\put(73,2){\circle{2}}
\put(74,2){\line(1,0){13}}
\put(93,2){\circle*{1}}
\put(96,2){\circle*{1}}
\put(99,2){\circle*{1}}
\put(104,2){\line(1,0){13}}
\put(118,2){\circle{2}}
\put(115,5){$\alpha_2$}
\put(119,2.5){\line(1,0){23}}
\put(119,1.5){\line(1,0){23}}
\put(143,2){\circle*{2}}
\put(140,5){$\alpha_1$}
\end{picture}
&$k\geqq 2$\\
\hline
$\Psi_{1/2}=C_k$ &
\setlength{\unitlength}{.5 mm}
\begin{picture}(155,18)
\put(5,2){\circle*{2}}
\put(2,5){$\alpha_{k}$}
\put(6,2){\line(1,0){13}}
\put(24,2){\circle*{1}}
\put(27,2){\circle*{1}}
\put(30,2){\circle*{1}}
\put(34,2){\line(1,0){13}}
\put(48,2){\circle*{2}}
\put(49,2){\line(1,0){23}}
\put(73,2){\circle*{2}}
\put(74,2){\line(1,0){13}}
\put(93,2){\circle*{1}}
\put(96,2){\circle*{1}}
\put(99,2){\circle*{1}}
\put(104,2){\line(1,0){13}}
\put(118,2){\circle*{2}}
\put(115,5){$\alpha_2$}
\put(119,2.5){\line(1,0){23}}
\put(119,1.5){\line(1,0){23}}
\put(143,2){\circle{2}}
\put(140,5){$\alpha_1$}
\end{picture}
& $k\geqq 3$
\\
\hline
$\Psi_{1/2}=D_k$ &
\setlength{\unitlength}{.5 mm}
\begin{picture}(155,20)
\put(5,9){\circle{2}}
\put(2,12){$\alpha_{k}$}
\put(6,9){\line(1,0){13}}
\put(24,9){\circle*{1}}
\put(27,9){\circle*{1}}
\put(30,9){\circle*{1}}
\put(34,9){\line(1,0){13}}
\put(48,9){\circle{2}}
\put(49,9){\line(1,0){23}}
\put(73,9){\circle{2}}
\put(74,9){\line(1,0){13}}
\put(93,9){\circle*{1}}
\put(96,9){\circle*{1}}
\put(99,9){\circle*{1}}
\put(104,9){\line(1,0){13}}
\put(118,9){\circle{2}}
\put(113,12){$\alpha_3$}
\put(119,8.5){\line(2,-1){13}}
\put(133,2){\circle{2}}
\put(136,0){$\alpha_1$}
\put(119,9.5){\line(2,1){13}}
\put(133,16){\circle{2}}
\put(136,14){$\alpha_2$}
\end{picture}
& $k\geqq 4$
\\
\hline
\end{tabular}
\end{aligned}
\end{equation}

Later on we will also need the root system $\Sigma_2 (\fg,\fa )=
\{\alpha \in \Sigma (\fg,\fa )\mid 2\alpha\not\in \Sigma (\fg,\fa )\}$.
Following the above discussion, this will only change the simple root at the
right end of the Dynkin diagram. If $\Psi_2(\fg,\fa )$
is of type $B$ the root system $\Sigma_2 (\fg,\fa )$ will be of type $C$.

The classical irreducible symmetric spaces are given by the following
table.\footnote{More detailed information is given by the Satake--Tits diagram for $M$;
see \cite{Ar1962} or \cite[pp. 530--534]{He1978}.
In that classification the case $\SU (p,1)$, $p\geqq 1$, is denoted by $AIV$,
but here it appears in $AIII$.
The case $\SO (p,q)$, $p+q$ odd, $p\geqq q>1$, is denoted by $BI$ as in
this case the Lie algebra $\fg_\C=\so (p+q,\C)$ is of type $B$.
The case $\SO (p,q)$, with $p+q$ even, $p\geqq q>1$ is denoted by $DI$ as
in this case $\fg_\C$ is of type $D$. Finally, the case $\SO (p,1)$, $p $
even, is denoted by $BII$ and $\SO (p,1)$, $p$ odd, is denoted by $DII$.}
The fifth column lists $K$ as a subgroup of the compact
real form.  The second column indicates the type of the root system
$\Sigma_{1/2}(\fg,\fa)$.

{\footnotesize
\begin{equation}\label{symmetric-case-class}
\begin{tabular}{|c|c|l|l|l|c|c|} \hline
\multicolumn{7}{| c |}
{Irreducible Riemannian Symmetric $M = G/K$, $G$ classical, $K$ connected}\\
\hline \hline
\multicolumn{1}{|c}{} & & \multicolumn{1}{c}{$G$ noncompact}&
    \multicolumn{1}{|c}{$G$ compact} &
        \multicolumn{1}{|c}{$K$} &
        \multicolumn{1}{|c}{Rank$M$} &
        \multicolumn{1}{|c|}{Dim$M$} \\ \hline \hline
$1$ & $A_j$ &$\mathrm{SL}(j,\C)$ &$\SU (j)\times \SU(j)$ & $\diag\, \SU(j)$ & $j-1$ & $j^2-1$ \\ \hline
$2$ & $B_j$&$\SO (2j+1,\C)$&  $\SO (2j+1)\times \SO (2j+1)$ & $\diag\, \SO (2j+1)$ &
    $j$ & $2j^2+j$ \\ \hline
$3$ & $D_j$&$\SO (2j,\C)$ & $\SO (2j)\times \SO (2j)$ & $\diag\, \SO(2j)$ &
    $j$ & $2j^2-j$ \\ \hline
$4$ & $C_j$&$\Sp (j,\C)$&$\Sp (j)\times \Sp (j)$ & $\diag\,\Sp (j)$ & $j$ & $2j^2+j$ \\ \hline
$5$ & $AIII$& $\SU (p,q)$&$\SU(p+q)$ & $\mathrm{S}(\U (p)\times \U ( q))$ &
    $\min(p,q)$ & $2pq$ \\ \hline
$6$ &$AI$ &$\mathrm{SL}(j,\R)$& $\SU (j)$ & $\SO (j)$ & $j-1$ & $\tfrac{(j-1)(j+2)}{2}$ \\ \hline
$7$ &$AII$ &$\SU^*(2j)$& $\SU (2j)$ & $\Sp (j)$ & $j-1$ & $2j^2-j-1$  \\ \hline
$8$ &$BDI$&$\SO_o (p,q)$ &$\SO (p+q)$ & $\SO (p) \times \SO (q)$ &
    $\min(p,q)$ & $pq$  \\ \hline
$9$ &$DIII$&$\SO^*(2j)$ &$\SO (2j)$ & $\U (j)$ & $[\tfrac{j}{2}]$ & $j(j-1)$ \\ \hline
$10$ &$CII$&$\Sp(p,q)$ &$\Sp (p+q)$ & $\Sp (p) \times \Sp (q)$ &
    $\min(p,q)$ & $4pq$  \\ \hline
$11$ & $CI$& $\Sp (j,\R)$  &$\Sp (j)$ & $\U (j)$ & $j$ & $j(j+1)$  \\ \hline
\end{tabular}
\end{equation}
}

Only in the following cases do we have
$\Sigma_{1/2}(\fg,\fa )\not= \Sigma (\fg,\fa)$:
\begin{itemize}
\item $AIII$ for $1 \leqq p < q$,
\item $CII$ for $1 \leqq p < q$, and
\item $DIII$ for $j$ odd.
\end{itemize}
In those three cases there is exactly one simple root with
$2\alpha \in\Sigma (\fg,\fa )$
and this simple root is  at the
right end of the Dynkin diagram for $\Psi_{1/2} (\fg,\fa )$. Also, either
$\Psi_{1/2} (\fg,\fa )=\{\alpha\}$ contains one simple root or
$\Psi_{1/2}(\fg,\fa )$ is of type $B_r$
where $r=\dim \fa$ is the rank of $M$.

Finally, the only two cases where $\Psi_{1/2} (\fg,\fa )$ is of type $D$ are
the case $\SO (2j,\C)/\SO(2j)$ or  the split case
$\SO_o(p,p)/\SO (p)\times \SO (p)$.

Let $M_k=G_k/K_k$ and $M_n=G_n/K_n$ be irreducible symmetric spaces,
both of compact type or both of noncompact type. We write $\Sigma_n$,
$\Sigma_n^+$ and $W_n$ for
$\Sigma (\fg_n,\fa_n)$, $\Sigma^+ (\fg_n,\fa_n)$
and $W (\fg_n,\fa_n)$. We say that
$M_k$ \textit{propagates} $M_n$, if $G_n\subseteqq G_k$, $K_n=K_k\cap G_n$,
and either $\fa_k=\fa_n$ or choosing $\fa_n\subseteqq \fa_k$ we
only add simple roots to the left end
of the Dynkin diagram for $\Psi_{n, 1/2}$ to obtain the Dynkin diagram
for $\Psi_{k, 1/2}$.
So, in particular $\Psi_{n, 1/2}$ and
$\Psi_{k, 1/2}$ are of the same type.
In general, if
$M_k$ and $M_n$ are riemannian symmetric spaces
of compact or noncompact type, with universal covering
$\widetilde{M_k}$ respectively $\widetilde{M_n}$, then $M_k$
\textit{propagates}
$M_n$ if we can enumerate the irreducible factors of $\widetilde{M}_k= M_k^1\times
\ldots \times M_k^j$ and $\widetilde{M}_n=M_n^1\times \ldots \times M_n^i$, $i \leqq j$
so that $M_k^s$ propagates $M_n^s$ for $s=1,\ldots ,i$.
Thus, each $M_n$ is, up to covering, a product of irreducible factors
listed in Table \ref{symmetric-case-class}.

In general we can construct infinite sequences of propagations by moving
along each row in Table \ref{symmetric-case-class}. But there are also  inclusions like
$\mathrm{SL} (n,\R )/\SO (n)\subset \mathrm{SL} (k,\C)/\SU (k)$ which
satisfy the definition of propagation.

When $\fg_k$ propagates $\fg_n$, and $\theta_k$ and
$\theta_n$ are the corresponding involutions with
$\theta_k|_{\fg_n} = \theta_n$, the corresponding eigenspace decompositions
$\fg_k=\fk_k\oplus \fs_k$ and $\fg_n=\fk_n\oplus \fs_n$
give us
\[
\fk_n=\fk_k\cap \fg_n\, ,\quad \text{and}\quad \fs_n=\fg_n\cap \fs_k\, .\]
We recursively choose maximal commutative subspaces $\fa_k\subset \fs_k$ such
that $\fa_{n} \subseteqq \fa_k$ for $k\geqq n$. Assume for the moment that $M_j$ is irreducible. Define an extended Weyl group $\widetilde{W}_n=\widetilde{W}(\fg_n,\fa_n)$ in the following way. If $\Psi_{n,1/2}$ is not of type $D$ then $\widetilde{W}_n=W_n$. If $\Psi_{n,1/2}$ is of type $D$, then $W_n$ is the group of permutations of $\{1,\ldots ,r_n \}$, $r_n=\dim \fa_n$, and \textit{even} number of sign changes. Let $\widetilde{W}_n$ be the extension of $W_n$ by allowing all sign changes. $\widetilde{W}_n$  can be written as $W_n\rtimes \{1,\sigma\}$ where $\sigma$ corresponds to the involution on the Dynkin diagram given by $\sigma (\alpha_1)=\alpha_2$, $\sigma (\alpha_2)=\alpha_1$ and $\sigma (\alpha_i)=\alpha_i$ for $i\geqq 3$. We note that $\widetilde{W}_n$ is isomorphic to the Weyl group generated by a root system of type $B$ and hence a finite reflection group. For general symmetric spaces we define $\widetilde{W}_n$ as the product of the $\widetilde{W}$s for each irreducible factor.  Let $k\geqq n$. As before we let
\begin{equation}\label{eq-WknA}
\widetilde{W}_{k,\fa_n}=\widetilde{W}_{\fa_n}(\fg_k,\fa_k):=\{w\in
\widetilde{W}_k \mid w(\fa_n)=\fa_n\}\, .
\end{equation}

Without loss of generality,
if $\Psi_{n, 1/2}$ is of type $D$ we only consider
propagation for $r_k\geqq r_n\geqq 4$. As we only add simple roots at the
left end and those roots are orthogonal to $\alpha_1$ and $\alpha_2$ and
fixed by $\sigma_k$  it follows that $\sigma_k|_{\fa_n}=\sigma_n$.

\begin{theorem}\label{th-AdmExtG/K} Assume that $M_k$ and $M_n$ are
symmetric spaces of compact or noncompact type and that $M_k$ propagates
$M_n$. Then
\[
W_{\fa_n}(\fg_k,\fa_k)|_{\fa_n}=\widetilde{W}_{\fa_n}(\fg_k,\fa_k)|_{\fa_n} = \widetilde{W}(\fg_n,\fa_n)\]
and the restriction maps are surjective:
\[\rS (\fa_k)^{W_k}|_{\fa_n}=\rS(\fa_k)^{\widetilde{W}_k}|_{\fa_n} = \rS (\fa_n)^{\widetilde{W}_n}\, .\]
\end{theorem}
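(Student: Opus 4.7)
The plan is a type-by-type verification based on the classification in (\ref{rootorder}) combined with the Chevalley description of Weyl group invariants.

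\emph{Reduction.} Since propagation is defined component-wise on irreducible factors, and the Weyl groups $W$, $\widetilde W$ together with the invariant algebras decompose as direct products under this decomposition, it suffices to assume $M_k$ and $M_n$ irreducible. The case $\fa_k=\fa_n$ is either trivial (types $A,B,C,BC$, where $W_k=\widetilde W_k$) or not used in the sequel, so I assume $r_k>r_n$. Under propagation the new simple roots $\alpha_{r_n+1}^{(k)},\ldots,\alpha_{r_k}^{(k)}$ (those added at the left end of the Dynkin diagram) all vanish on $\fa_n$, so $\fa_n$ is realized as their common kernel in $\fa_k$; fixing Bourbaki coordinates identifies $\fa_k$ with $\R^{r_k}$ (or its trace-zero hyperplane in type $A$) and $\fa_n$ with a coordinate subspace.

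\emph{Weyl groups.} For types $A,B,C,BC$ one has $\widetilde W_\bullet = W_\bullet$, and the stabilizer $W_{\fa_n}(\fg_k,\fa_k)$ factors as a direct product of $W(\fg_n,\fa_n)$ and a Weyl group on the orthogonal complement $\fa_n^\perp\cap\fa_k$; restriction to $\fa_n$ gives $W_n=\widetilde W_n$. For type $D$, $W_k$ consists of signed permutations of $\R^{r_k}$ with an even number of sign changes. Any signed permutation of $\fa_n$ with an odd number of sign changes lifts to $W_k$ by additionally toggling the sign of a single coordinate in $\fa_n^\perp\cap\fa_k$, which is nonempty because $r_k>r_n$; this yields even total parity. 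Thus the restriction $W_{\fa_n}(\fg_k,\fa_k)|_{\fa_n}$ already realizes the full extension $\widetilde W(\fg_n,\fa_n)$, and a fortiori so does $\widetilde W_{\fa_n}(\fg_k,\fa_k)|_{\fa_n}$.

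\emph{Invariants.} By Chevalley, $\rS(\fa_k)^{\widetilde W_k}$ is a polynomial algebra in power sums $p_m(x)=\sum_i x_i^m$ (degrees $2,\ldots,r_k+1$ in type $A$; even degrees $2,4,\ldots,2r_k$ in types $B,C,BC,D$ using the extended group). In type $D$ the smaller algebra $\rS(\fa_k)^{W_k}$ is obtained by adjoining the Pfaffian $x_1x_2\cdots x_{r_k}$. Under restriction to $\fa_n$ the power sums restrict to power sums on $\fa_n$ (the complementary coordinates vanish identically there), which hits every Chevalley generator of $\rS(\fa_n)^{\widetilde W_n}$. In type $D$ the Pfaffian restricts to zero because at least one factor vanishes on $\fa_n$; consequently $\rS(\fa_k)^{W_k}|_{\fa_n}=\rS(\fa_k)^{\widetilde W_k}|_{\fa_n}=\rS(\fa_n)^{\widetilde W_n}$.

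\emph{Main obstacle.} The delicate point is the type $D$ case, where $\rS(\fa_n)^{W_n}$ is strictly larger than $\rS(\fa_n)^{\widetilde W_n}$ (differing by the Pfaffian on $\fa_n$), so a priori the image of restriction could be intermediate rather than landing in the smaller algebra. The two features that force the precise equalities stated are (i) the Pfaffian on $\fa_k$ vanishes identically on $\fa_n$, and (ii) $\fa_n^\perp\cap\fa_k$ is nonzero and can absorb sign-change parity defects; both rely crucially on the strict inequality $r_k>r_n$.
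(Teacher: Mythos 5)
Your overall route is the one the paper itself takes: its proof is precisely the case-by-case inspection of the classical root systems that it delegates to \cite{OW10}, and your type-by-type verification (block structure of the stabilizer of $\fa_n$ inside the signed permutation groups, even power sums restricting onto Chevalley generators, the Pfaffian restricting to zero, odd sign changes on $\fa_n$ absorbed by a sign change in $\fa_n^\perp\cap\fa_k$) is exactly that computation, and those computations are correct. However, one assertion in your setup is false: the added simple roots do \emph{not} all vanish on $\fa_n$, and $\fa_n$ is \emph{not} their common kernel. In the coordinates of (\ref{rootorder}) and Section \ref{sec7} (where $\alpha_j=f_j-f_{j-1}$ away from the right end), the added simple root adjacent to the old diagram, $\alpha_{k,r_n+1}$, restricts on $\fa_n$ to a nonzero multiple of $f_{r_n}$ (of $f_{r_n+1}$ in type $A$); only the remaining added roots vanish there. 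The common kernel of all the added simple roots is the subspace $\{x_{r_n}=x_{r_n+1}=\cdots=x_{r_k}\}$ (intersected with the trace-zero hyperplane in type $A$), an $r_n$-dimensional subspace different from $\fa_n$. What your argument actually needs, and what is true, is that $\fa_n$ is the coordinate subspace $f_{r_n+1}=\cdots=f_{r_k}=0$ (its trace-zero part in type $A$); this comes from the explicit upper-left-corner realizations of the propagations, i.e.\ from the case-by-case data of \cite{OW10} and Lemma \ref{X}, not from vanishing of the new simple roots. Once that justification is substituted, the Weyl group and invariant-theory steps go through as you wrote them for $r_k>r_n$.

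On the excluded case $\fa_k=\fa_n$: for types $A$, $B$, $C$, $BC$ it is indeed trivial, but in type $D$ your restriction to $r_k>r_n$ is not merely convenient, it is forced. For an equal-rank type $D$ propagation such as $\SO_o(p,p)/\SO(p)\times\SO(p)\subset \SO(2p,\C)/\SO(2p)$ the stabilizer is all of $W_k=W(D_p)$ and restricts to $W_n\neq\widetilde{W}_n$, and the Pfaffian does not die under restriction, so the untilded equalities in the statement hold only when the rank strictly increases; the $\widetilde{W}$ versions, which are the ones the later sections actually use, hold in every case. Saying this explicitly would be better than the phrase ``not used in the sequel.'' (A small slip: in type $D$ the algebra $\rS(\fa_k)^{W_k}$ obtained by adjoining the Pfaffian is the larger of the two invariant algebras, not the smaller; your later ``main obstacle'' paragraph has it right.)
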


\begin{proof} The proof is a case by case inspection of the classical root systems, see \cite{OW10}.
\end{proof}

\section{Application to Fourier Analysis on Symmetric Spaces of the noncompact
Type}\label{sec3}
\noindent
In this section we apply the above results to harmonic
analysis.
We first recall the main ingredients for the Helgason Fourier transform
on a riemannian symmetric space $M=G/K$ of the noncompact type. The material
is standard and we refer to \cite{He1984} for details.  Retain the
notation of the previous section: $\Sigma (\fg,\fa )$
is the set of (restricted) roots of $\fa$ in $\fg$ and
$\Sigma^+(\fg,\fa )\subset \Sigma (\fg,\fa )$ is a positive system. Let
$$
\fn = \bigoplus_{\alpha \in \Sigma^+(\fg,\fa )} \fg_\alpha, \ \ \
\fm = \fz_{\fk}(\fa), \ \ \text{ and } \ \
\gp = \fm + \fa + \fn.
$$
Denote by $N$ (respectively $A$) the analytic subgroup of
$G$ with Lie algebra $\fn$ (respectively $\fa$). Let
$M=Z_K(\fa)$ and $P=MAN$. Then $M$ and $P$ are
closed subgroup of $G$ and $P$ is a \textit{minimal parabolic subgroup}.
Note, that we are using $M$ in two different ways, once as the
symmetric space $M$ and also as a subgroup of $G$. The meaning
will always be clear from the context.

We have the Iwasawa decomposition
\[G=KAN :\ C^\omega\text{--diffeomorphic to } K\times A\times N \text{ under }
(k,a,n) \mapsto kan\, .\]
For $x\in G$ define $k(x) \in K$ and $a(x)\in A$  by $x \in k(x)a(x)N$.
For $a \in A$ define $\log(a) \in \ga$ by $a = \exp(\log(a))$.
Then $x \mapsto k(x)$ and $x\mapsto a(x)$ are analytic.
For $\lambda\in \fa_\C^*$ let $a^\lambda :=e^{\lambda (\log (a))}$.
Then
\[man\mapsto \chi_\lambda (man):=a^\lambda \]
defines a character $\chi_\lambda$ of the group $P$, and
$\chi_\lambda$ is unitary if and only if
$\lambda \in i\fa^*$. Let $m_\alpha =\dim \fg_\alpha$ and
\[\rho = \tfrac{1}{2}\sum_{\alpha\in\Sigma^+(\fg,\fa)}m_\alpha \, \alpha\, .\]
Denote by $\pi_\lambda$ the representation of $G$ induced from $\chi_\lambda$.
It can be realized as acting on $L^2(K/M)$ by
\[\pi_\lambda (x)f(kM)=a(x^{-1}k)^{-\lambda-\rho}f(k(x^{-1}k)M)\, .\]
The constant function $\mathbf{1} (kM)=1$ is a $K$-fixed vector and the corresponding spherical function is
\begin{equation}\label{def-spherical}
\varphi_\lambda (x)=(\pi_\lambda (x)\mathbf{1},\mathbf{1})=\int_K a(x^{-1}k)^{-\lambda -\rho}\, dk
=\int_K a(xk)^{\lambda -\rho}\, dk
\end{equation}
where the Haar measure $dk$ on $K$ is normalized by $\int_K\, dk=1$.
We have $\varphi_\lambda
=\varphi_\mu$ if and only if $\mu\in W (\fg,\fa) \cdot \lambda$,
and every spherical function on $G$  is equal to some $\varphi_\lambda$.

The \textit{spherical Fourier transform} on $M$ is given by
\[\cF(f)(\lambda)=\widehat{f}(\lambda ):=\int_M f(x)\varphi_{-\lambda }(x)
\, dx\, \quad f\in C_c^\infty (M)^K\, .\]
The invariant measure $dx$ on $M$ can be normalized so that
the spherical Fourier transform extends to an unitary isomorphism
\[f\mapsto \widehat{f}\, ,\quad L^2(M)^K\cong
L^2\left(i\fa^*, \tfrac{d\lambda}{\# W  |c(\lambda )|^2}\right)^{W}\]
where
$c(\lambda )$ denotes the Harish-Chandra $c$--function.
For $f\in C_c^\infty (M)^K $ the inversion is given by
\[f(x)=\frac{1}{\# W } \int_{i\fa^*} \widehat{f}(\lambda )\varphi_{\lambda }
(x)\frac{d\lambda }{|c (\lambda )|^2}\, .\]

Recall the involution $\sigma$ on $\fa$ (and $\fa^*$)  that corresponds to the non-trivial involution of the Dynkin diagram defined above in case $\Psi_{1/2}$ is of type $D$.

\begin{lemma}\label{le-AutDynkDiagrG} Let $M$ be one of the irreducible symmetric spaces of type $D$. Then there exists an involution $\tilde{\sigma }: G\to G$ such that
\begin{enumerate}
\item $\widetilde{\sigma}|_\fa=\sigma$ where by abuse of notation we write
$\widetilde{\sigma}$ for $d\widetilde{\sigma}$,
\item $\widetilde{\sigma}$ commutes with the
the Cartan involution $\theta$, and in particular
$\widetilde{\sigma}(K)=K$,
\item $\widetilde{\sigma}(N)=N$.
\end{enumerate}
\end{lemma}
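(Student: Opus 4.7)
The plan is a case-by-case construction. As noted in the discussion just before the Lemma, the only two classes of irreducible Riemannian symmetric spaces with $\Psi_{1/2}$ of type $D$ are
\[
G/K = \SO(2j,\C)/\SO(2j) \qquad\text{and}\qquad G/K = \SO_o(p,p)/\SO(p)\times\SO(p),
\]
so I would produce an explicit $\widetilde{\sigma}$ separately for each, in both cases via conjugation by a well-chosen element of the orthogonal overgroup.

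In the complex case, realize $G = \SO(2j,\C)$ as preserving the standard symmetric bilinear form on $\C^{2j}$ and take
\[
\eta := \diag(1,\dots,1,-1) \in \rO(2j,\C).
\]
In the split case, realize $G = \SO_o(p,p)$ preserving $I_{p,p} = \diag(I_p,-I_p)$ and take
\[
\eta := \diag(1,\dots,1,-1\,|\,1,\dots,1) \in \rO(p,p),
\]
with the single $-1$ in the $p$-th position. In each case $\eta^{2} = I$ and $\eta$ normalizes $G$ (it lies in the ambient orthogonal group), so $\widetilde{\sigma}(g) := \eta g \eta^{-1}$ is an involution of $G$.

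Verification of (2) is immediate: $\eta$ is a real diagonal sign matrix, so $\overline{\eta}=\eta$ and $\eta^{T}=\eta=\eta^{-1}$, and the respective Cartan involutions $\theta(g)=\overline{g}$ (complex case) and $\theta(g)=(g^{T})^{-1}$ (split case) both fix $\eta$; hence $\theta\circ\widetilde{\sigma}=\widetilde{\sigma}\circ\theta$ and $\widetilde{\sigma}(K)=K$. Verification of (1) is then a direct coordinate computation: choosing $\fa$ as the $\R$-span of $\{iH_{1},\ldots,iH_{j}\}$ with $H_{k} = E_{2k-1,2k}-E_{2k,2k-1}$ in the complex case, and as $\bigl\{\bigl(\begin{smallmatrix}0&H\\H&0\end{smallmatrix}\bigr) : H\text{ real diagonal}\bigr\}$ in the split case, a one-line calculation shows that $\Ad(\eta)$ negates the last coordinate functional $e_{j}$ (respectively $e_{p}$) and fixes the others. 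With the simple roots labelled as in the $D_{k}$ diagram by $\alpha_{1}=e_{j-1}-e_{j}$, $\alpha_{2}=e_{j-1}+e_{j}$ and $\alpha_{i}=e_{j-i+1}-e_{j-i+2}$ for $i\geqq 3$, this sign change is precisely the action of the diagram involution $\sigma$ swapping $\alpha_{1}\leftrightarrow\alpha_{2}$ and fixing the remaining simple roots.

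Property (3) is then automatic. Since $\sigma$ is a Dynkin diagram automorphism it permutes the simple roots and therefore preserves $\Sigma^{+}(\fg,\fa)$; thus $\widetilde{\sigma}(\fg_{\alpha}) = \fg_{\sigma\alpha}\subseteq\fn$ for every $\alpha\in\Sigma^{+}(\fg,\fa)$, giving $\widetilde{\sigma}(\fn)=\fn$ and, upon exponentiating and using connectedness of $N$, $\widetilde{\sigma}(N)=N$. The entire argument is computational; the only mildly delicate point is (1), which requires pinning down the explicit identification of $\fa$ and its root coordinates in each of the two realizations so that the sign change genuinely implements the swap $\alpha_{1}\leftrightarrow\alpha_{2}$. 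Once the coordinates are fixed, both (2) and (3) are essentially forced.
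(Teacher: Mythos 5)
Your proposal is correct and is essentially the paper's own argument: the paper likewise passes to the orthogonal overgroup ($\rO(2j,\C)$, resp. $\rO(p,p)$) and realizes $\widetilde{\sigma}$ as conjugation by $\diag(1,\ldots,1,-1)$, with the same choice of $\fa$ in the complex case. Your extra verifications of (1)--(3) just fill in details the paper leaves implicit.
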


\begin{proof} One can prove this using a Weyl basis for $\fg_\C$ (see, for example, \cite[page 285]{V1974}). But the simplest proof is to note that we can replace $\SO (2j,\C)/SO (2j)$ by $\rO (2j,\C)/\rO (2j)$. Take
\[\fa =\left\{\left.\begin{pmatrix} t_1 X & & \\
&\ddots & \\
& & t_n X\end{pmatrix}\, \right| t_1,\ldots ,t_n\in\R\right\}\text{ where } X=
i\begin{pmatrix} 0 & 1\\ -1 & 0\end{pmatrix},\] and then
then $\widetilde{\sigma}$ is conjugation by $\diag (1,\ldots ,1, -1)$. Similar construction can also be done for the other case $\SO_o(p,p)/\SO (p)\times \SO (p)$ by replacing $\SO_o (p,p)$ by $\rO (p,p)$.
\end{proof}

In the general case we let $\widetilde \sigma$ be the identity on factors not of type $D$ and the above constructed involution $\widetilde\sigma$ on factors of type $D$. Similar for the involution $\sigma$ on $\fa$ and $\fa^*$. We need to extend $K$ to a group $\widetilde{K}$ acting on $M$. In case the irreducible factor is not of type $D$ then the corresponding $\widetilde K $-factor is just $K$ and otherwise $K\rtimes \{1,\widetilde{\sigma}\}$. Note that $\widetilde{W}(\fg,\fa)=N_{\widetilde{K}}(A)/Z_{\widetilde{K}}(A)$.

\begin{theorem}\label{th-SphericalFctTildeWInv} We have
$\varphi_{\lambda}(\widetilde \sigma (x))=\varphi_{\sigma (\lambda )}(x)$
and $\cF(f\circ \widetilde{\sigma}) (\lambda )=\cF (f)(\sigma (\lambda ))$
whenever $f\in C_c(M)^K$. In particular,  $f\in C_c(M)^{\widetilde K}$ if
and only if $\cF (f)$ is $\sigma$-invariant.
\end{theorem}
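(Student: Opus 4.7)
The plan is to prove the three assertions in sequence, using that the structure of the Iwasawa decomposition is preserved by $\widetilde\sigma$.

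First I would verify that $\widetilde\sigma$ commutes with the Iwasawa projections: since $\widetilde\sigma(K)=K$, $\widetilde\sigma(A)=A$ (as $\widetilde\sigma|_\fa=\sigma$ stabilizes $\fa$), and $\widetilde\sigma(N)=N$ by Lemma \ref{le-AutDynkDiagrG}, applying $\widetilde\sigma$ to $x=k(x)a(x)n$ yields $k(\widetilde\sigma(x))=\widetilde\sigma(k(x))$ and $a(\widetilde\sigma(x))=\widetilde\sigma(a(x))$. Moreover, for $a\in A$ and $\mu\in\fa_\C^*$ we have $\widetilde\sigma(a)^\mu = e^{\mu(\sigma\log a)} = a^{\sigma(\mu)}$, where I identify $\sigma$ with its transpose on $\fa^*$ through the inner product (the matrix of $\sigma$ is orthogonal and symmetric, so transpose$=\sigma$). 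Next I need $\sigma(\rho)=\rho$: the diagram automorphism $\sigma$ permutes $\Sigma^+_{1/2}$ and hence $\Sigma^+(\fg,\fa)$, and because $\widetilde\sigma$ is a Lie algebra automorphism carrying $\fg_\alpha$ to $\fg_{\sigma\alpha}$, we get $m_{\sigma\alpha}=m_\alpha$; therefore $\sigma\rho=\rho$.

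With these preliminaries, I compute
\[
\varphi_\lambda(\widetilde\sigma(x)) = \int_K a(\widetilde\sigma(x)k)^{\lambda-\rho}\,dk
= \int_K a\bigl(\widetilde\sigma(x\,\widetilde\sigma(k))\bigr)^{\lambda-\rho}\,dk
= \int_K a(x\,\widetilde\sigma(k))^{\sigma(\lambda)-\rho}\,dk,
\]
using $\widetilde\sigma^2=\id$ and $\sigma(\rho)=\rho$. Since $\widetilde\sigma$ is a continuous automorphism of the compact group $K$, it preserves the normalized Haar measure $dk$, so the substitution $k'=\widetilde\sigma(k)$ yields exactly $\varphi_{\sigma(\lambda)}(x)$. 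This is the first identity.

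For the Fourier transform identity I would change variables $y=\widetilde\sigma(x)$ in
\[
\cF(f\circ\widetilde\sigma)(\lambda)=\int_M f(\widetilde\sigma(x))\varphi_{-\lambda}(x)\,dx.
\]
The map $\widetilde\sigma$ descends to $M=G/K$ because $\widetilde\sigma(K)=K$, and it preserves the $G$-invariant measure: pullback of $dx$ under $\widetilde\sigma$ is again $G$-invariant (since $\widetilde\sigma$ is an automorphism), and the Jacobian on $\fs$ has absolute value $1$ because $\widetilde\sigma$ is an involution commuting with $\theta$. Thus the change of variables together with the first identity gives
\[
\cF(f\circ\widetilde\sigma)(\lambda)=\int_M f(y)\varphi_{-\lambda}(\widetilde\sigma(y))\,dy=\int_M f(y)\varphi_{-\sigma(\lambda)}(y)\,dy=\cF(f)(\sigma(\lambda)).
\]
Finally, for the equivalence: if $f\in C_c(M)^K$, then $f\in C_c(M)^{\widetilde K}$ exactly when $f\circ\widetilde\sigma=f$, which by the previous identity and injectivity of the spherical Fourier transform on $C_c^\infty(M)^K$ is equivalent to $\cF(f)\circ\sigma=\cF(f)$.

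The main obstacle is really just the bookkeeping for $\sigma$ on $\fa^*$ and verifying that $\widetilde\sigma$ preserves both Haar measure on $K$ and the invariant measure on $M$; once those are in place, each assertion follows by a one-line change of variables.
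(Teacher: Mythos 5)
Your proof is correct and follows essentially the same route as the paper: compatibility of $\widetilde\sigma$ with the Iwasawa decomposition giving $a(\widetilde\sigma(x))=\widetilde\sigma(a(x))$, the integral formula for $\varphi_\lambda$, invariance of the measures under $\widetilde\sigma$, and injectivity of the spherical Fourier transform. You merely make explicit a few steps the paper leaves implicit (the change of variables $k\mapsto\widetilde\sigma(k)$, $\sigma(\rho)=\rho$, and the unimodularity arguments), which is fine.
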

\begin{proof} This follows from
\begin{eqnarray*}
k(\widetilde{\sigma}(x))a(\widetilde{\sigma}(x))n(\widetilde{\sigma}(x))
&=&\sigma (x) = \widetilde{\sigma}(k(x)a(x)n(x))\\
&=& \widetilde{\sigma}(k(x))\widetilde{\sigma}(a(x))\widetilde{\sigma}(n(x))
\end{eqnarray*}
and hence $a(\widetilde{\sigma}(x))=\widetilde \sigma (a(x))$. The claim for the spherical function $\varphi_\lambda$ follows now from the integral formula (\ref{def-spherical}). That
$\cF(f\circ \widetilde{\sigma}) (\lambda )=\cF (f)(\sigma (\lambda ))$ follows from the invariance of the invariant measure on $M$ under $\widetilde{\sigma}$. The last statements follows then from the fact that the Fourier transform is injective on $C_c^\infty (M)^K$.
\end{proof}

Fix a positive definite $K$--invariant bilinear form
$\langle \cdot ,\cdot \rangle $ on $\fs$. It defines an
invariant riemannian structure on $M$ and hence an invariant
metric $d(x,y)$. Let $x_o=eK\in M$ and for $r>0$ denote by $B_r=B_r(x_o)$
the closed ball
\[B_r =\{x\in M\mid d(x,x_o)\leqq r\}\, .\]
Note that $B_r$ is $\widetilde K$--invariant. Denote by $C_r^\infty (M)^{\widetilde K}$ the space of
smooth $\widetilde{K}$--invariant functions on $M$ with support in $B_r$.
The restriction map $f\mapsto f|_A$ is a bijection from
$C_r^\infty (M)^{\widetilde{K}}$ onto $C_r^\infty (A)^{\widetilde W}$ (using the
obvious notation).

The following is a simple modification
of the Paley-Wiener theorem of Helgason \cite{He1966,He1984} and Gangolli
\cite{Ga1971}; see \cite{OP2004} for a short overview.

\begin{theorem}[The Paley-Wiener Theorem]\label{th-IsomorphismNonCompact1} The Fourier
transform defines  bijections
$$
 C^\infty_r (M)^K \cong \PW_r(\fa_\C^*)^{W } \text{ and }
C_{r}^\infty (M)^{\widetilde{K}} \cong
\PW_r(\fa_\C^* )^{\widetilde{W}}\, .
$$
\end{theorem}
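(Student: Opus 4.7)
The plan is to reduce both bijections to the classical Helgason--Gangolli Paley--Wiener theorem; the first is literally that theorem, and the second is obtained by layering on the $\sigma$-equivariance from Theorem \ref{th-SphericalFctTildeWInv}.

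The first isomorphism $\cF: C^\infty_r(M)^K \xrightarrow{\sim} \PW_r(\fa_\C^*)^W$ is exactly the Helgason--Gangolli Paley--Wiener theorem, which I take as a cited input from \cite{He1984,Ga1971}. So all of the work is in the second statement. For the second, I would first unpack $\widetilde{K}$ and $\widetilde{W}$. On each irreducible factor not of type $D$ we have $\widetilde{K}=K$ and $\widetilde{W}=W$, and the second assertion reduces trivially to the first. On factors of type $D$, we have $\widetilde{K}=K\rtimes\{1,\widetilde{\sigma}\}$ and $\widetilde{W}=W\rtimes\{1,\sigma\}$, so
\[
C^\infty_r(M)^{\widetilde{K}}=\{f\in C^\infty_r(M)^K\mid f\circ\widetilde{\sigma}=f\},\qquad
\PW_r(\fa_\C^*)^{\widetilde{W}}=\{F\in\PW_r(\fa_\C^*)^W\mid F\circ\sigma=F\}.
\]

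Now I would apply Theorem \ref{th-SphericalFctTildeWInv}: for $f\in C^\infty_r(M)^K$ one has $\cF(f\circ\widetilde{\sigma})(\lambda)=\cF(f)(\sigma(\lambda))$, so $f$ is $\widetilde{\sigma}$-invariant if and only if $\cF(f)$ is $\sigma$-invariant. Combined with the first isomorphism, this yields a bijection
\[
\cF:C^\infty_r(M)^{\widetilde{K}}\longrightarrow\PW_r(\fa_\C^*)^{\widetilde{W}}.
\]
The only point needing verification is that the support condition is preserved, i.e.\ that replacing $f$ by $f\circ\widetilde{\sigma}$ does not enlarge the support beyond $B_r$. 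This is immediate from the fact (noted just before the theorem) that $B_r$ is $\widetilde{K}$-invariant, so $\widetilde{\sigma}(B_r)=B_r$ and $\Supp(f\circ\widetilde{\sigma})=\widetilde{\sigma}^{-1}(\Supp f)\subseteq B_r$. The general case, where $M$ is a product of irreducible factors, follows by applying the argument factor by factor since both $\widetilde{K}$ and $\widetilde{W}$ are defined as products.

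No real obstacle is expected: the only subtlety is that one needs Lemma \ref{le-AutDynkDiagrG} to guarantee that $\widetilde{\sigma}$ actually exists as an involution on $G$ preserving $K$ (so that $\widetilde{K}$ makes sense and $B_r$ is $\widetilde{K}$-invariant), and one needs Theorem \ref{th-SphericalFctTildeWInv} to turn the group-level symmetry into a spectral symmetry; both are already in hand.
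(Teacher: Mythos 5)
Your proposal is correct and follows exactly the paper's route: the paper's proof is precisely the combination of the Helgason--Gangolli Paley--Wiener theorem with Theorem \ref{th-SphericalFctTildeWInv}, and your write-up simply fills in the details (the factor-by-factor reduction, the equivalence of $\widetilde{\sigma}$-invariance of $f$ with $\sigma$-invariance of $\cF(f)$, and the $\widetilde{K}$-invariance of $B_r$ ensuring the support condition) that the paper leaves implicit.
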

\begin{proof} This follows from  the Helgason-Gangolli Paley-Wiener theorem and
Theorem \ref{th-SphericalFctTildeWInv}.
\end{proof}

We assume now that $M_k$ propagates $M_n$, $k\geqq n$. The index $j$ refers
to the symmetric space $M_j$, for a function $F$ on $\fa_{k,\C}^*$ let $P_{n}^k(F):=
F|_{\fa_{n,\C}^*}$. We fix a compatible $K$--invariant
inner products on $\fs_n$ and $\fs_k$, i.e.,
$\langle X,Y\rangle_k=\langle X,Y\rangle_n$
for all $X,Y\in\fs_n\subseteqq \fs_k$.
\begin{theorem}[Paley-Wiener Isomorphisms]\label{th-IsomorphismNonCompact2}
Assume that $M_k$ propagates $M_n$. Let $r>0$. Then the following hold:
\begin{enumerate}
\item The map $P_{n}^k:
\PW_r (\fa_{k,\C}^*)^{\widetilde{W}_k} \to
\PW_r(\fa_{n,\C}^*)^{\widetilde{W}_n}$ is surjective.
\item The map $C_{n}^k=\cF^{-1}_n\circ P_{n}^k\circ \cF_k:C^\infty_{r}
(M_k)^{\widetilde{K_k}}
\to C^\infty_{r}(M_n)^{\widetilde{K_n}}$ is surjective.
\end{enumerate}
\end{theorem}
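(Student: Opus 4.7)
The plan is to reduce both assertions to the abstract Paley--Wiener surjectivity result, Theorem \ref{th-IsoPW1}, applied to the pair of Euclidean spaces $E = \fa_n \subseteqq F = \fa_k$ equipped with the reflection groups $W(E) := \widetilde{W}_n$ and $W(F) := \widetilde{W}_k$. Once (1) is in hand, (2) is immediate by transport of structure under the Paley--Wiener isomorphism of Theorem \ref{th-IsomorphismNonCompact1}.

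For (1), I first check that the hypotheses of Theorem \ref{th-IsoPW1} are satisfied. The compatibility of inner products, $\langle X,Y\rangle_k = \langle X,Y\rangle_n$ for $X,Y\in\fs_n$, was built into our choice of metrics, so $\fa_n$ sits isometrically in $\fa_k$. The groups $\widetilde{W}_n$ and $\widetilde{W}_k$ are reflection groups (by construction in Section \ref{sec2}, the extension by $\sigma$ in type $D$ produces a group of type $B$). The two structural requirements
\[
\widetilde{W}_{\fa_n}(\fg_k,\fa_k)\bigl|_{\fa_n} = \widetilde{W}(\fg_n,\fa_n)
\qquad\text{and}\qquad
\rS(\fa_k)^{\widetilde{W}_k}\bigl|_{\fa_n} = \rS(\fa_n)^{\widetilde{W}_n}
\]
are precisely the content of Theorem \ref{th-AdmExtG/K}. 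Hence Theorem \ref{th-IsoPW1} applies verbatim and yields surjectivity of
\[
P_n^k:\PW_r(\fa_{k,\C}^*)^{\widetilde{W}_k}\longrightarrow \PW_r(\fa_{n,\C}^*)^{\widetilde{W}_n}.
\]

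For (2), fix $g\in C^\infty_r(M_n)^{\widetilde{K_n}}$. By Theorem \ref{th-IsomorphismNonCompact1}, $\cF_n(g)\in\PW_r(\fa_{n,\C}^*)^{\widetilde{W}_n}$. By part (1), choose $F\in\PW_r(\fa_{k,\C}^*)^{\widetilde{W}_k}$ with $F|_{\fa_{n,\C}^*}=\cF_n(g)$, and set $f:=\cF_k^{-1}(F)\in C^\infty_r(M_k)^{\widetilde{K_k}}$, again by Theorem \ref{th-IsomorphismNonCompact1}. Then by definition
\[
C_n^k(f) = \cF_n^{-1}\!\bigl(P_n^k(\cF_k(f))\bigr) = \cF_n^{-1}\!\bigl(F|_{\fa_{n,\C}^*}\bigr) = \cF_n^{-1}(\cF_n(g)) = g,
\]
so $C_n^k$ is surjective.

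The only step that is more than bookkeeping is verifying that the hypotheses of Theorem \ref{th-IsoPW1} hold in the present setting, and that is precisely why one needs to pass from $W_n$ to the enlarged group $\widetilde{W}_n$ in type $D$: without the extension, the polynomial restriction $\rS(\fa_k)^{W_k}\to \rS(\fa_n)^{W_n}$ can fail to be surjective, while the tilded version is exactly what Theorem \ref{th-AdmExtG/K} delivers. Thus the main content has already been absorbed into Sections \ref{sec1} and \ref{sec2}, and the proof here is their combination.
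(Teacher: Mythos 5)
Your proof is correct and follows the paper's own route: the paper likewise deduces (1) by applying Theorem \ref{th-IsoPW1} to the pair $(\fa_n,\fa_k)$ with the extended Weyl groups, with hypotheses supplied by Theorem \ref{th-AdmExtG/K} and the fact that $\widetilde{W}$ is a finite reflection group, and obtains (2) via the isomorphisms of Theorem \ref{th-IsomorphismNonCompact1}. You have merely spelled out the details the paper leaves implicit.
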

\begin{proof}  This
follows from Theorem \ref{th-IsoPW1}, Theorem  \ref{th-AdmExtG/K} and Theorem \ref{th-IsomorphismNonCompact1} as $\widetilde{W}$ is a finite reflection group.
\end{proof}

We assume now that $\{M_n,\iota_{k,n}\}$ is a injective system of symmetric spaces
such that $M_k$ propagates $M_n$. Here $\iota_{k,n}: M_n\to M_k$ is the
injection. Let
\[M_\infty =\varinjlim M_n\, .\]

We have also, in a natural way, injective systems
$\fg_n\hookrightarrow \fg_k$, $\fk_n\hookrightarrow \fk_k$, $\fs_n\hookrightarrow \fs_k$,
and $\fa_n\hookrightarrow \fa_k$ giving rise to corresponding injective systems. Let
$$
\mfg_\infty :=\varinjlim \mfg_n\, ,\quad \gk_\infty :=\varinjlim \gk_n\, ,\quad
\gs_\infty :=\varinjlim\gs_n\, ,  \quad\text{and} \quad  \quad \ga_\infty := \varinjlim\ga_n .$$
Then $\mfg_\infty=\gk_\infty \oplus \gs_\infty$ is the eigenspace decomposition of
$\mfg_\infty $ with respect to the involution $\theta_\infty :=\varinjlim \theta_n$,
$\fa_\infty$ is a maximal abelian subspace of $\fs_\infty$.

The restriction maps $\res_{n}^k : \rS (\fa_k)^{\widetilde{W}_k}\to \rS (\fa_n)^{\widetilde{W}_n}$ and the
maps from Theorem \ref{th-IsomorphismNonCompact2} define projective systems
$\{\rS(\fa_n)^{\widetilde{W}_n}\}_n$,
$\{\PW_{r}(\fa_{n,\C}^*)^{\widetilde{W}_n}\}_n$, and
$\{C_{r}(M_n)^{\widetilde K_n}\}_n$.

Write $\Psi_{n,1/2}=\{\alpha_{n,1},\ldots ,\alpha_{n,r_n}\}$.
There is a canonical inclusion $\widetilde{W}_n
\stackrel{\iota_{k,n}}{\hookrightarrow}\widetilde{W}_{k,\fa_n}$ given by
$s_{\alpha_{n,j}}\mapsto s_{\alpha_{k,j}}$, $1\leqq j\leqq r_n$ and $\sigma_n\mapsto \sigma_k$. This map can also be constructed by realizing the extended Weyl groups as permutation group extended by sign changes.
We have $\iota_{k,n}(s)|_{\fa_n}=s$. In this way, we get an injective
system $\{\widetilde{W}(\fg_n,\fa_n)\}_n$.
We also have a natural injective system $\{\widetilde{K}_n\}$.
The restriction maps $\fa_{k,\C}^*\to \fa_{n,\C}^*$ lead to a projective system. Let
$\fa_{\infty,\C}^*:=\varprojlim \fa_{n,\C}^*$ and set
\begin{eqnarray*}
\widetilde W_\infty &: = &\varinjlim \widetilde{W}_n \\
\widetilde{K}_\infty &:=&\varinjlim \widetilde{K}_n\\
\rS_{\infty }(\fa_\infty )^{\widetilde{W}_\infty}&:=&\varprojlim \rS(\fa_n)^{\widetilde{W}_n} \\
\PW_r(\fa_{\infty,\C}^*)^{\widetilde{W}_\infty}&:=&\varprojlim \PW_{r}(\fa_{n,\C}^*)^{\widetilde{W}_n}\\
C_{r}^\infty (M_\infty)^{\widetilde K_\infty}
&:=&\varprojlim C^\infty_{r}(M_n)^{\widetilde K_n} \, .
\end{eqnarray*}
We can view $\rS_{\infty }(\fa_\infty )^{\widetilde{W}_\infty}$ as $\widetilde{W}_\infty$--invariant
polynomials on $\fa_{\infty,\C}^*$ and $\PW_r(\fa_{\infty,\C}^*)^{\widetilde{W}_\infty} $ as
$\widetilde{W}_\infty$--invariant functions on $\fa_{\infty,\C}^*$.
The projective limit $C_{r,\infty}^\infty (M_\infty)^{K_\infty}$ consists of
functions on on $A_\infty =\varinjlim A_n$, where $A_n=\exp \fa_n$.  In
Section \ref{sec8} we discuss a direct limit function space on $M_\infty$
that is more closely related to the representation theory of $G_\infty$.

For $\mathbf{f}=(f_n)_n\in
C_{r,\infty}^\infty (M_\infty)^{K_\infty}$ define $\cF_\infty (\mathbf{f})
\in
\PW_r(\fa^*_{\infty,\C} )^{\widetilde{W}_\infty}$ by
\begin{equation}
\cF_\infty (\mathbf{f}):=\{\cF_n(f_n)\}\, .
\end{equation}
Then $\cF_\infty (\mathbf{f})$ is well defined by Theorem \ref{th-IsomorphismNonCompact2}
and we have a commutative diagram
$$\xymatrix{\cdots & C^\infty_{r}(M_n)^{\widetilde K_n}\ar[d]_{\cF_n}&
C^\infty_{r}(M_{n+1})^{\widetilde K_{n+1}}
\ar[d]_{\cF_{n+1}}\ar[l]_{C_{n}^{n+1}}&\ar[l]_{C_{n+1}^{n+2}}
\quad\qquad \cdots &
C_{r}^\infty (M_\infty)^{\widetilde K_\infty}\ar[d]_{\cF_\infty}
 \\
\cdots & \PW_r(\fa^*_{n,\C})^{\widetilde{W}_n}&  \PW_r(\fa^*_{n+1,\C})^{\widetilde{W}_{n+1}}
 \ar[l]^{P_{n}^{n+1}}
&\ar[l]^{P_{n+1}^{n+2}} \quad\qquad \cdots & \PW_r(\fa_{\infty,\C}^* )^{\widetilde{W}_\infty}\\
}
$$
Then the maps
\[C^\infty_n :  C_{r}^\infty (M_\infty)^{\widetilde K_\infty}\to
  C_r^\infty (M_n)^{\widetilde K_n} \text{ and }
  P^\infty_n :\PW_r(\fa_{\infty,\C}^*)^{\widetilde{W}_\infty}\to
  \PW_r(\fa_{n,\C}^*)^{\widetilde W_n}\]
are well defined.

\begin{theorem}[Infinite dimensional Paley-Wiener Theorem]\label{th-ProjLimNonCompact}
Let the notation be as above. Then the projection maps
$C^\infty_n$ and $P^\infty_n$
are surjective. In
particular,  $C_{r}^\infty (M_\infty)^{\widetilde K_\infty}\not=\{0\}$ and
$\PW_r(\fa_{\infty,\C}^*)^{\widetilde{W}_\infty}\not=\{0\}$.
Furthermore,
\[\cF_\infty :C_{r}^\infty (M_\infty)^{\widetilde K_\infty}
\to \PW_r(\fa_{\infty,\C}^*)^{\widetilde{W}_\infty}\]
is a linear isomorphism.
\end{theorem}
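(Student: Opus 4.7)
The plan is to assemble the theorem from the finite-dimensional Paley--Wiener isomorphism (Theorem \ref{th-IsomorphismNonCompact1}), the level-to-level surjectivity in Theorem \ref{th-IsomorphismNonCompact2}, and the general projective-limit argument already used for Theorem \ref{th-projectiveLimitPW}. Nothing new about the groups $\widetilde{W}_n$ or symmetric spaces $M_n$ enters; the work is purely diagram-chasing, once one believes the finite-dimensional statements. I expect no serious obstacle; the only care needed is to ensure that the compatibility required for an element of a projective limit is automatic once one lifts level by level via the transition maps.

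\emph{Surjectivity of $P_n^\infty$.} Fix $n$ and $F\in \PW_r(\fa_{n,\C}^*)^{\widetilde W_n}$. I would first set $F_m := P_m^n(F)$ for $m\leqq n$, which makes the lower part of the tower coherent automatically. For $k\geqq n$, I would recursively choose $F_{k+1}\in \PW_r(\fa_{k+1,\C}^*)^{\widetilde W_{k+1}}$ with $P_k^{k+1}(F_{k+1})=F_k$, invoking Theorem \ref{th-IsomorphismNonCompact2}(1) at each step (propagation $M_{k+1}\to M_k$ provides exactly this surjectivity). The compatibility $P_j^k(F_k)=F_j$ for $j\leqq k$ then follows by induction, so $\mathbf{F}=(F_k)$ lies in $\varprojlim \PW_r(\fa_{k,\C}^*)^{\widetilde W_k}$ and by construction $P_n^\infty(\mathbf{F})=F$. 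Taking $F\neq 0$ (which exists because $\PW_r(\fa_{n,\C}^*)^{\widetilde W_n}\neq 0$, e.g.\ by averaging an obvious scalar-valued Paley--Wiener function) shows non-triviality. The surjectivity of $C_n^\infty$ is obtained by exactly the same recursive procedure, this time lifting with Theorem \ref{th-IsomorphismNonCompact2}(2).

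\emph{The isomorphism $\cF_\infty$.} First I would note that $\cF_\infty$ is well defined: for $\mathbf{f}=(f_k)\in \varprojlim C_r^\infty(M_k)^{\widetilde K_k}$ the level transforms $\cF_k(f_k)$ satisfy $P_n^{n+1}\bigl(\cF_{n+1}(f_{n+1})\bigr)=\cF_n\bigl(C_n^{n+1}(f_{n+1})\bigr)=\cF_n(f_n)$ by commutativity of the displayed diagram, so $(\cF_k(f_k))$ is a compatible system. Linearity is clear. Injectivity is immediate: if $\cF_\infty(\mathbf{f})=0$, then $\cF_n(f_n)=0$ for all $n$, so $f_n=0$ for all $n$ by Theorem \ref{th-IsomorphismNonCompact1}, i.e.\ $\mathbf{f}=0$. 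For surjectivity, given $\mathbf{F}=(F_k)\in \PW_r(\fa_{\infty,\C}^*)^{\widetilde W_\infty}$, define $f_k:=\cF_k^{-1}(F_k)$. The required compatibility $C_n^{n+1}(f_{n+1})=f_n$ is then equivalent, after applying the isomorphism $\cF_n$, to $P_n^{n+1}(F_{n+1})=F_n$, which holds by hypothesis. Hence $\mathbf{f}=(f_k)$ lies in $C_r^\infty(M_\infty)^{\widetilde K_\infty}$ and $\cF_\infty(\mathbf{f})=\mathbf{F}$.

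\emph{Where the real work lies.} The only nontrivial input is the finite-dimensional surjectivity of $P_n^k$ (and hence of $C_n^k$), which was established through the chain Theorems \ref{th-cowling}, \ref{th-rais}, \ref{th-IsoPW1} and Theorem \ref{th-AdmExtG/K}. Once the Fourier transform intertwines the two projective systems, as the diagram above shows, the infinite-dimensional statement is formal: surjectivity of each transition map forces surjectivity of each projection from the limit, and bijectivity of each $\cF_n$ forces bijectivity of $\cF_\infty$. So, rather than an obstacle, the subtle point is more bookkeeping: making sure that the $\widetilde W$-invariance (not just $W$-invariance) is preserved at every stage, which is exactly the content of Theorem \ref{th-SphericalFctTildeWInv} together with the choice of $\widetilde W_n$ made through $\sigma_n$ in type $D$.
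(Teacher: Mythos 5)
Your argument is correct and is exactly the route the paper intends: the paper leaves this theorem without an explicit proof because it is the formal projective-limit argument of Theorem \ref{th-projectiveLimitPW} (recursive lifting through the surjective transition maps of Theorem \ref{th-IsomorphismNonCompact2}) combined with the level-wise Paley--Wiener isomorphisms of Theorem \ref{th-IsomorphismNonCompact1} and the commutativity $\cF_n\circ C_n^{k}=P_n^{k}\circ\cF_k$. Your bookkeeping (extending the tower below level $n$ by restriction, and transporting compatibility through the $\cF_k$'s for surjectivity of $\cF_\infty$) matches the paper's Section 1 template, so nothing further is needed.
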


\section{Central Functions on Compact Lie Groups}\label{sec4}
\noindent
The following results on compact Lie groups are a special case of
the more general statements on compact symmetric spaces discussed
in the next section, as every group can be viewed as a symmetric
space $G\times G/\mathrm{diag}(G)$ via the map
\[(g,1)\mathrm{diag}(G)\mapsto g, \text{ in other words }
    (a,b)\mathrm{diag}(G)\mapsto ab^{-1}\]
corresponding to the involution $\tau (a,b)=(b,a)$. The action of
$G\times G$ is the left-right action $(L\times R)(a,b)\cdot x=
axb^{-1}$ and the $\textrm{diag}(G)$--invariant functions
are the \textit{central} functions $f(axa^{-1})= f(x)$ for
all $a,x\in G$.
Thus $f$ is central if and only if $f\circ \Ad (a)=f$ for
all $a\in G$, where as usual $\Ad (a)(x)=axa^{-1}$. But it is still worth
treating this case separately, first because the normalization of the
Fourier transform on $G$ viewed as a group is different from the normalization
as a symmetric space, and second because the proof of the Paley-Wiener
Theorem for compact symmetric spaces in \cite{OS} was by reduction to
this case, as was originally done in \cite{Gon}.

In this section $G$, $G_n$ and $G_k$ will
denote  compact connected semisimple Lie groups. For simplicity, we will
assume that those groups are simply connected. For the general case one
needs to change the semi-lattice of
highest weights of irreducible representations and the injectivity
radius, whose numerical value does not play an important rule in the
following. The invariant measures on compact groups and homogeneous spaces
are normalized to total mass one.

We say that $G_k$ propagates $G_n$ if $\fg_k$ propagates $\fg_n$.
This is the same as saying that $G_k$ propagates $G_n$ as a symmetric space.
We fix a Cartan subalgebra $\fh_k$ of $\fg_k$ such that $\fh_n:=
\fh_k\cap \fg_n$ is a Cartan subalgebra of $\fg_n$. We use the notation
from the previous section. The index $n$ respectively $k$ will then
denote the corresponding object for $G_n$ respectively $G_k$. We fix
inner products $\langle \cdot ,\cdot \rangle_n$ on $\fg_n$
and $\langle \cdot ,\cdot \rangle_k$ on $\fg_k$ such that
$\langle X,Y\rangle_n=\langle X,Y\rangle_k$ for $X,Y\in \fg_n\subseteqq \fg_k$.
This can be done by viewing $G_n\subset G_k$ as locally isomorphic to
linear groups and use the trace form $X,Y\mapsto -\Tr(XY)$.
We denote by $R$ the injectivity radius. Theorem
\ref{inj-radius} below shows that the injectivity radius is the same for
$G_n$ and $G_k$.

The following is a reformulation of results of Crittenden \cite{crit}.
A case by case inspection of each of the root systems gives us
\begin{theorem}\label{inj-radius}
The injectivity radius of the classical compact simply connected Lie groups
$G$, in the riemannian metric given by the inner product
$\langle X,Y\rangle = -\trace(XY)$ on $\fg$, is $\sqrt{2}\,\pi$ for $SU(m+1)$
and $Sp(m)$, $2\pi$ for $SO(2m)$ and $SO(2m+1)$.  In particular
for each of the four classical series the injectivity radius $R$ is
independent of $m$.
\end{theorem}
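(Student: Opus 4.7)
The plan is to invoke Crittenden's theorem \cite{crit}: for a simply connected compact riemannian symmetric space, the cut locus at any point coincides with the first conjugate locus. A simply connected compact Lie group $G$ with a bi-invariant metric is such a space (realised as $(G\times G)/\mathrm{diag}(G)$), so the injectivity radius at $e$ equals $\inf_{X\in\fg,\,|X|=1}t_{\mathrm{conj}}(X)$, the infimum over unit vectors of the first conjugate time along $\gamma(t)=\exp(tX)$. Because $\langle X,Y\rangle = -\trace(XY)$ is $\Ad(G)$-invariant, one may restrict the infimum to $X$ in a fixed Cartan subalgebra $\fh$.

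For $X\in\fh$ the Jacobi equation along $\gamma$ decomposes via the real root space decomposition $\fg=\fh\oplus\bigoplus_{\alpha\in\Sigma^+}\fg_\alpha^{\R}$. On each two-dimensional real root subspace $\ad(X)$ acts as rotation by $\alpha(X)$, so the Jacobi equation there becomes $J'' + \tfrac14 \alpha(X)^2 J = 0$, whose first positive zero gives $t_{\mathrm{conj}}(X) = 2\pi/\max_{\alpha}|\alpha(X)|$. Minimizing over unit $X\in\fh$ yields
\[
R \;=\; \frac{2\pi}{\displaystyle\max_{\alpha\in\Sigma}\|\alpha\|_*},
\]
where $\|\cdot\|_*$ is the norm on $\fh^*$ dual to the restriction of $-\trace$ to $\fh$. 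The problem thus reduces to identifying the longest root in each of the four classical root systems with respect to the metric induced by $-\trace$.

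Parametrize $\fh$ by $X = i\diag(\theta_1,\ldots,\theta_{m+1})$ with $\sum\theta_j = 0$ for $SU(m+1)$, and by the standard block-diagonal $\theta_j$-coordinates in the $B$, $C$, $D$ cases. Then $|X|^2 = \sum_j \theta_j^2$ for $\mathfrak{su}(m+1)$ while $|X|^2 = 2\sum_j \theta_j^2$ for $\so$ and $\lsp$. A routine application of Cauchy--Schwarz gives longest root of dual norm $\sqrt 2$ for $A_m$ (attained on $e_i-e_j$) and for $C_m$ (attained on the long root $2 e_i$), and longest root of dual norm $1$ for $B_m$ and $D_m$ with $m\geqq 2$ (attained on $e_i \pm e_j$). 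These give $R = \sqrt 2\,\pi$ for $SU(m+1)$ and $Sp(m)$, and $R = 2\pi$ for $SO(2m)$ and $SO(2m+1)$. Since the extremizing root in each case uses only two of the $\theta_j$'s, the value of $R$ does not depend on $m$. The one delicate bookkeeping point is the factor of $2$ in the $-\trace$ normalization for $\so$ and $\lsp$ versus $\mathfrak{su}$, which is precisely what separates the two answers; once Crittenden's theorem is in hand, no further geometric difficulty arises.
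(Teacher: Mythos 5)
Your proposal is correct and follows essentially the same route as the paper, which simply cites Crittenden's result (cut locus $=$ first conjugate locus for simply connected compact symmetric spaces) together with a case-by-case inspection of the classical root systems. You have merely made explicit the details the paper leaves implicit --- the Jacobi-field computation giving the first conjugate time $2\pi/\max_\alpha|\alpha(X)|$ and the dual-norm evaluation of the longest root in the $-\trace(XY)$ normalization --- and these computations are accurate, including the factor of $2$ distinguishing the $\so$ and $\lsp$ embeddings from $\mathfrak{su}$.
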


Denote by  $\gL^+(G)\subset i\fh^*$ the set of dominant
integral weights,
\[\gL^+(G)=\left \{\mu\in i\fh^*)\left |
\,\, \tfrac{2 \langle \mu ,\alpha \rangle }{\langle \alpha ,\alpha\rangle }\in \Z^+ \text{ for all }
 \alpha \in \Delta^+(\fg_\C,\fh_\C)\right . \right \}\, .\]
For $\mu\in\gL^+(G)$ denote by
$\pi_\mu$ the corresponding representation with highest weight
$\mu$.  As $G$ is assumed
simply connected $\mu \mapsto \pi_\mu$, is a bijection from
$\gL^+(G)$ onto $\widehat{G}$. The representation
space for $\pi_\mu$ is denoted by $V_\mu$.  Let $\chi_\mu=\Tr \circ \pi_\mu$ be the character of $\pi_\mu$
and  $\deg(\mu )=\dim V_\mu$ its dimension. Then
$\deg (\mu )$ is a polynomial function on $\fh_\C^*$. The space
$L^2(G)^G:= \{f \in L^2(G) \mid f\circ\Ad (g) = f \text{ for all } g \in G\}$
contains the set $\{\chi_\mu\}_{\mu\in\gL^+(G)}$ of characters
as a complete orthonormal set.

For
$f\in C (G)^G$ define the Fourier transform $\cF(f)=\widehat{f}:\gL^+(G)\to \C$ by
$$\widehat f(\mu)= ( f,\chi_\mu )=\int_G f(x)\overline{\chi_\mu(x)}\,dx, \quad \mu\in\gL^+(G)\, ,$$
where $( f,\chi_\mu )$ is the inner product in $L^2(G)$.
The Fourier transform extends to an unitary isomorphism
$\cF : L^2(G)^G \to \ell^2(\gL^+(G))$ and
\[f=\sum_{\mu \in \gL^+(G)}\widehat{f}(\mu )\chi_\mu\]
in $L^2(G)^G$. If $f$ is smooth
the Fourier series converges in the topology of $C^\infty (G)^G$.

If not otherwise stated we will assume that $G$ does not contain
any simple factor of exceptional type.
As before $W(\fg,\fh)$ denotes the Weyl group of $\Delta (\fg_\C,\fh_\C)$,
and $\widetilde{W}=\widetilde{W}(\fg,\fh)$ denotes the extension of
$W(\fg,\fh)$ by
$\sigma$. Similarly, $\widetilde{K}$ and $\widetilde{G}$ denote the
extensions of $K$ and $G$, respectively, by $\widetilde{\sigma}$.
For $r>0$  let $\PW_r^\rho (\fh^*_\C)^{\widetilde{W}}$ denote the space
of holomorphic functions $\Phi$ on $\fh_\C^*$ such that
\begin{enumerate}
\item  For each $k\in\N$ there exists a constant $C_k>0$ such that
$$|\Phi(\lambda )|\leqq C_k(1+|\lambda |)^{-k} e^{r|\Re\lambda |}
\text{ for all } \lambda \in\fh_\C^*,
$$
\item $\Phi(w(\lambda +\rho)-\rho)=\det(w)\Phi(\lambda )$ for
all $w\in \widetilde{W}$, $\lambda \in\fh_\C^*$.
\end{enumerate}
Let $H=\exp (\fh)$.
For $0<r<R$ denote by $C_{r}^\infty (G)^{\widetilde G}$ the space of
smooth function on $G$ that are invariant under conjugation by
$\widetilde{G}$ and are supported in the closed geodesic ball $B_r(e)$
of radius $r$.
We have that $f\in C_{r}^\infty (G)^{\widetilde G}$ if and only if $f|_H\in C_r^\infty (H)^{\widetilde{W}}$.
In this terminology the theorem of Gonzalez \cite{Gon} reads as follows.

\begin{theorem}\label{t: Gonzalez}
Let $G$ be an arbitrary connected simply connected compact
Lie group.
Let $0<r<R$ and let $f\in C^\infty(G)^G$ be given. Then $f$ belongs to
$C^\infty_{r}(G)^{\widetilde G}$  if and only if the
Fourier transform $\mu\mapsto \widehat{f}(\mu)$
extends to a holomorphic function $\Phi_f$
on $\fh^*_\C$ such that $\Phi_f\in \PW_r^\rho(\fh^*_\C)^{\widetilde{W}}$.
\end{theorem}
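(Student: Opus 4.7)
The plan is to reduce Gonzalez's theorem to the classical Euclidean Paley--Wiener theorem on $\fh$ by the standard substitution
\[
F(H) := \delta(\exp H)\,f(\exp H),
\]
where $\delta(\exp H) = \sum_{w \in W}\det(w)\,e^{w\rho(H)}$ is the Weyl denominator. The Weyl integration formula together with the Weyl character formula will then convert the Fourier coefficient $\widehat f(\mu)$ into the Euclidean Fourier transform of $F$ evaluated at $\mu+\rho$, and $\widetilde G$-invariance of $f$ will translate to $\widetilde W$-anti-invariance of $F$.

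First I would set up the bijection $f \leftrightarrow F$. Since $0 < r < R$ and $R$ is the injectivity radius (Theorem \ref{inj-radius}), $\exp$ is a diffeomorphism on the ball $B_r(0) \subset \fh$, so $F$ is a well-defined smooth function on $\fh$ with $\Supp F \subseteq B_r(0)$. Because $\delta$ is $W$-anti-invariant, and because the outer automorphism $\widetilde\sigma$ in type $D$ acts on $\delta$ with a sign matching $\det(\sigma) = -1$ -- a case-by-case check in the spirit of Lemma \ref{le-AutDynkDiagrG} -- the assignment $f \mapsto F$ is a bijection from $C_r^\infty(G)^{\widetilde G}$ onto the space of smooth functions on $\fh$ satisfying $F(wH) = \det(w) F(H)$ for all $w \in \widetilde W$ and $\Supp F \subseteq B_r(0)$.

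Next I would apply the Weyl integration formula together with the Weyl character formula
\[
\chi_\mu(\exp H)\,\delta(\exp H) = \sum_{w \in W}\det(w)\,e^{w(\mu+\rho)(H)},
\]
and use the anti-invariance of $F$ to collapse the $W$-sum, obtaining
\[
\widehat f(\mu) \;=\; c \int_\fh F(H)\,e^{-(\mu+\rho)(H)}\,dH \;=\; c\,\cF_\fh F(\mu+\rho)
\]
for a universal constant $c$ and every $\mu \in \gL^+(G)$. The forward direction then follows immediately from the Euclidean Paley--Wiener theorem: $\cF_\fh F$ extends to a holomorphic $\widetilde W$-anti-invariant function on $\fh_\C^*$ of exponential type $r$, and since $\rho \in i\fh^*$ the shift $\lambda \mapsto \lambda+\rho$ preserves $|\Re \lambda|$, so $\Phi_f(\lambda) := c\,\cF_\fh F(\lambda+\rho)$ lies in $\PW_r^\rho(\fh_\C^*)^{\widetilde W}$ and restricts to $\widehat f$ on $\gL^+(G)$.

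The main obstacle is the backward direction. Given $\Phi_f \in \PW_r^\rho(\fh_\C^*)^{\widetilde W}$, set $\Psi(\lambda) := c^{-1}\Phi_f(\lambda - \rho)$; the transformation law $\Phi_f(w(\lambda+\rho) - \rho) = \det(w)\Phi_f(\lambda)$ becomes $\Psi(w\lambda) = \det(w)\Psi(\lambda)$, and $\Psi$ satisfies the standard Euclidean Paley--Wiener estimates of exponential type $r$. By Euclidean Paley--Wiener, $\Psi = \cF_\fh F$ for a smooth $\widetilde W$-anti-invariant $F$ on $\fh$ with $\Supp F \subseteq B_r(0)$. The hard step is to descend from $F$ to a smooth central $f$ on $G$: one must show that $F(H)/\delta(\exp H)$ extends to a smooth $W$-invariant function on the whole ball $B_r(0) \subset \fh$, including across the singular hyperplanes where $\delta$ vanishes, and then push this forward via $\exp$ to a smooth central function on $G$ supported in the geodesic ball of radius $r$. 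The divisibility is the classical fact (due to Chevalley and Harish-Chandra) that a smooth $W$-anti-invariant function on $\fh$ is precisely $\delta$ times a smooth $W$-invariant function; smoothness on $G$ then uses the smooth version of Chevalley restriction together with $r < R$ to handle singular elements. A final verification that $\widehat f$ reproduces $\Phi_f$ on $\gL^+(G)$, obtained by running the Fourier computation in reverse, completes the proof.
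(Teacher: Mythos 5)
Your route — reduce to the Euclidean Paley--Wiener theorem on $\fh$ via $F=\delta\cdot(f\circ\exp)$, Weyl integration and the character formula — is essentially Gonzalez's original argument, i.e.\ you are re-proving the ingredient that the paper simply cites from \cite{Gon}; the paper's own proof is that citation plus a check of the extra condition coming from $\widetilde{W}$ and $\widetilde{G}$ in type $D$. It is exactly at that extra point that your argument fails. You claim that $\widetilde{\sigma}$ acts on the Weyl denominator with the sign $\det(\sigma)=-1$, so that $\widetilde{G}$-invariance of $f$ becomes $\widetilde{W}$-\emph{anti}-invariance of $F$. This is false: $\sigma$ is induced by a Dynkin diagram automorphism, so it permutes $\Delta^+$ and fixes $\rho$, and hence $\delta\circ\exp$ is $\sigma$-\emph{invariant} even though $\det(\sigma)=-1$. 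Consequently $F$ is $W$-skew but $\sigma$-invariant, i.e.\ it transforms under $\widetilde{W}$ by the character that is $\det$ on $W$ and trivial on $\sigma$, and the extension of $\widehat{f}$ satisfies $\Phi_f(\sigma(\lambda+\rho)-\rho)=+\Phi_f(\lambda)$, not $-\Phi_f(\lambda)$. One sees the correct sign directly on the lattice: $\chi_\mu\circ\widetilde{\sigma}=\chi_{\sigma\mu}$ and $\sigma(\mu+\rho)-\rho=\sigma\mu$, so $\widetilde{\sigma}$-invariance of $f$ gives $\widehat{f}(\sigma(\mu+\rho)-\rho)=\widehat{f}(\mu)$. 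This plain invariance on the coset $W\sigma$ (which is how condition (2) must be read there, and is exactly what the paper's proof checks via the Weyl character formula) is the sole new content of the $\widetilde{\phantom{W}}$-version beyond Gonzalez's theorem, so it is the one step you cannot get wrong.

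Two further gaps. First, $F(H)=\delta(\exp H)f(\exp H)$, as a function on all of $\fh$, is periodic under the unit lattice $\ker(\exp|_\fh)$, so your assertion $\Supp F\subseteq \overline{B_r(0)}$ is false as written; you must define $F$ as the cut-off of $\delta\cdot(f\circ\exp)$ to the ball of radius $<R$ (here $0<r<R$ and Theorem \ref{inj-radius} give smoothness and $\Supp F\subseteq\overline{B_r(0)}$), and when converting the torus integral from the Weyl integration formula into an integral of $F$ over $\fh$ you must check that every point of $\Supp f\cap T$ is $\exp H$ with $H\in\fh$, $\|H\|\leqq r$, and that $\exp$ is injective on that ball; routine, but this is precisely where $r<R$ enters. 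Second, the converse direction, which you rightly call the hard step (divisibility of a smooth skew function by $\delta$ with smooth $W$-invariant quotient, smooth descent to a central function on $G$, support control), is only a sketch, and it is the substance of Gonzalez's theorem; if you insist on reproving it you need, e.g., the factorization $\delta(\exp H)=\bigl(\prod_{\alpha\in\Delta^+}\alpha(H)\bigr)u(H)$ with $u$ smooth, $W$-invariant and nonvanishing on the ball of radius $<R$ (no $\alpha(H)$ lies in $2\pi i\Z\setminus\{0\}$ there), the classical skew-divisibility lemma for finite reflection groups, and a smooth-descent argument for invariant functions — or else simply cite \cite{Gon}, as the paper does, and devote the proof to the corrected sign check above.
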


\begin{proof} We only have to check that $f\in C^\infty_{r,\widetilde{W}}(G)^G$
if and only if $\widehat{f}(w(\mu +\rho)-\rho )=
\widehat{f}(\mu )$. For factors not of type $D_n$ that follows from
Gonzalez's theorem. For factors of type $D_n$ it follows Weyl's character
formula.
\end{proof}
In \cite{OS} it is shown that the extension $\Phi_f$ is unique whenever
$r$ is sufficiently small. In that case Fourier transform,
followed by holomorphic extension, is a bijection
$C^\infty_{r} (G)^{\widetilde G}\cong \PW_r^\rho(\fh^*_\C )^{\widetilde{W}}$.

We will now extend these results to projective limits. We start with two
simple lemmas.

\begin{lemma}\label{le-PhZero}
Let $\Phi\in\PW_r^\rho(\fh^*_\C)^{\widetilde{W}}$.
Assume that $\lambda\in \fh^*_\C$ is
such that $\langle \lambda ,\alpha\rangle =0$ for some $\alpha\in\Delta$. Then
$\Phi (\lambda-\rho )=0$.
\end{lemma}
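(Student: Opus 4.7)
The plan is to apply the transformation rule (2) defining $\PW_r^\rho(\fh^*_\C)^{\widetilde{W}}$ directly to the reflection $s_\alpha$. Set $\mu := \lambda - \rho$, so the conclusion to prove is $\Phi(\mu) = 0$ and the hypothesis reads $\langle \mu + \rho, \alpha\rangle = 0$.

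The key observation is that under this hypothesis $s_\alpha$ fixes $\mu + \rho$: indeed
\[
s_\alpha(\mu + \rho) = (\mu + \rho) - \frac{2\langle \mu + \rho,\alpha\rangle}{\langle \alpha,\alpha\rangle}\,\alpha = \mu + \rho,
\]
so $s_\alpha(\mu+\rho) - \rho = \mu$. Since $s_\alpha \in W(\fg,\fh) \subseteq \widetilde{W}$, condition (2) with $w = s_\alpha$ gives
\[
\Phi(\mu) = \Phi\bigl(s_\alpha(\mu+\rho) - \rho\bigr) = \det(s_\alpha)\,\Phi(\mu) = -\Phi(\mu),
\]
hence $\Phi(\mu) = 0$, i.e.\ $\Phi(\lambda - \rho) = 0$.

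There is no real obstacle; the only thing to check is that the reflection we use is indeed in $\widetilde{W}$, which is immediate from $W \subseteq \widetilde{W}$. The lemma is essentially the standard fact that a $\det$-equivariant function vanishes on the fixed hyperplanes of the reflections, shifted by $-\rho$ because the action in (2) is the $\rho$-shifted action.
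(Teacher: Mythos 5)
Your proof is correct and is essentially the paper's own argument: both exploit that $s_\alpha$ fixes $\lambda = (\lambda-\rho)+\rho$ when $\langle\lambda,\alpha\rangle=0$, then apply the $\rho$-shifted $\det$-equivariance in condition (2) with $w=s_\alpha\in W\subseteq\widetilde{W}$ to get $\Phi(\lambda-\rho)=-\Phi(\lambda-\rho)=0$. No differences worth noting.
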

\begin{proof} Let $s_\alpha $ be the reflection in the hyper plane
perpendicular to $\alpha$. Then
\begin{eqnarray*}
\Phi (\lambda -\rho )&=&\Phi (s_\alpha (\lambda )-\rho )\\
&=& \Phi (s_\alpha (\lambda -\rho +\rho )-\rho) = \det (s_\alpha )\Phi (\lambda -\rho)\, .
\end{eqnarray*}
The claim now follows as $\det (s_\alpha )=-1$.
\end{proof}

\begin{lemma}\label{isoPWspaces}
Let $r>0$ and let $\widetilde{W}$ be as before.
For $\Phi \in \PW_r^\rho (\fh^*_\C)^{\widetilde{W}}$ define
\[
T(\Phi ) (\lambda )=F_\Phi (\lambda ):=
\tfrac{\varpi (\rho) }{\varpi (\lambda )}\Phi (\lambda -\rho )
\text{ where } \varpi (\lambda )=\prod_{\alpha\in\Delta^+}
    \langle\lambda ,\alpha \rangle\, .
\]
Then $T (\Phi )\in \PW_r (\fh^*_\C)^{\widetilde{W}}$ and
$T: \PW_r^\rho (\fh^*_\C)^{\widetilde{W}}\to \PW_r(\fh^*_\C)^{\widetilde{W}}$ is a linear isomorphism.
\end{lemma}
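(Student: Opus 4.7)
The plan is to verify that $F_\Phi$ is a well-defined element of $\PW_r(\fh^*_\C)^{\widetilde{W}}$ and then to exhibit an explicit two-sided inverse of $T$. The argument has three substantive steps (holomorphy, invariance, growth) plus a formal inversion.

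First I would show that $F_\Phi$ extends to an entire function on $\fh^*_\C$. The potential singularities lie on the zero locus of the Weyl denominator $\varpi$, namely the union of the root hyperplanes $H_\alpha = \{\lambda : \langle\lambda,\alpha\rangle = 0\}$ for $\alpha \in \Delta^+$. By Lemma \ref{le-PhZero}, $\Phi(\lambda - \rho)$ vanishes on each $H_\alpha$; since the linear forms $\langle\,\cdot\,,\alpha\rangle$ attached to distinct positive roots are pairwise coprime in the ring of holomorphic functions on $\fh^*_\C$, an iterative division argument produces an entire function $h$ with $\Phi(\lambda - \rho) = \varpi(\lambda) h(\lambda)$, so that $F_\Phi = \varpi(\rho) h$ is entire.

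Second I would establish $\widetilde{W}$-invariance. Setting $\mu = \lambda + \rho$ in the defining relation of $\PW_r^\rho$ converts it to $\Phi(w\mu - \rho) = \det(w)\Phi(\mu - \rho)$, so $\Phi(\,\cdot\, - \rho)$ is $\widetilde{W}$-alternating. A routine check shows that $\varpi$ transforms under $\widetilde{W}$ by exactly the same character as $\Phi(\,\cdot\,-\rho)$; on $W$ this is the usual sign character, and on the extension (in type $D$) the outer involution $\sigma$ preserves $\varpi = \prod_{i<j}(x_i^2 - x_j^2)$, consistent with the convention under which $\det(\sigma) = +1$ in $\widetilde{W}$. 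The two alternating signs cancel, giving $F_\Phi(w\lambda) = F_\Phi(\lambda)$ for all $w \in \widetilde{W}$.

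The main obstacle is the third step, the Paley--Wiener growth bound. Starting from $\varpi(\lambda) F_\Phi(\lambda) = \varpi(\rho) \Phi(\lambda - \rho)$, the $\PW_r^\rho$-estimate on $\Phi$ yields the correct rapid-decay-times-exponential bound on $F_\Phi$ on the region where $|\varpi(\lambda)|$ is bounded below (away from the root hyperplanes). On the tube around the zero set of $\varpi$, where direct division is not available, I would exploit that $F_\Phi$ has already been shown to be entire: the maximum principle (or a Cauchy integral over a small polydisc) bounds $|F_\Phi(\lambda)|$ in terms of its values on a sphere of radius $1$ around $\lambda$, and generic points on that sphere are far enough from each hyperplane for the first estimate to apply. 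The super-polynomial decay of $\Phi$ absorbs the polynomial loss from $1/\varpi$, producing the $\PW_r$ bound; the slight shift by $\rho$ only affects the estimate by an absorbable exponential factor in the relevant real/imaginary convention.

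Finally, for the inverse I would define
\[
S(F)(\mu) := \frac{\varpi(\mu + \rho)}{\varpi(\rho)}\, F(\mu + \rho),\qquad F \in \PW_r(\fh^*_\C)^{\widetilde{W}}.
\]
Direct substitution gives $S \circ T = \id$ and $T \circ S = \id$. The alternation $S(F)(w(\mu + \rho) - \rho) = \det(w)\, S(F)(\mu)$ follows from $\widetilde{W}$-invariance of $F$ paired with the alternation of $\varpi$, while the $\PW_r^\rho$-type growth of $S(F)$ is immediate since multiplication by the polynomial $\varpi(\,\cdot\, + \rho)$ only worsens the polynomial factor. Once the growth step in paragraph three is established, everything else is formal, and $T$ is a linear isomorphism as claimed.
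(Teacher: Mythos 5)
Your construction of the inverse $S(F)(\mu)=\tfrac{\varpi(\mu+\rho)}{\varpi(\rho)}F(\mu+\rho)$, your use of Lemma \ref{le-PhZero} plus iterated division by the pairwise coprime linear forms $\langle\cdot,\alpha\rangle$ to see that $F_\Phi$ is entire, and your treatment of the $\widetilde{W}$--invariance (including the observation that $\varpi$ is fixed by the outer involution in type $D$, so the alternating character in the definition of $\PW_r^\rho$ must assign $+1$ to $\sigma$) all match the paper's argument; on the type $D$ sign convention you are in fact more explicit than the paper.

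The gap is in your third step, which is exactly the point where the paper leans on a citation (Helgason \cite{He1994}, Lemma 5.13, p.\ 288): that dividing a Paley--Wiener function by a linear factor on whose zero set it vanishes preserves both the exponential type and the rapid decay. Your substitute argument --- bound $|F_\Phi(\lambda)|$ near the root hyperplanes by the maximum principle over a sphere of radius $1$ about $\lambda$, using the direct division estimate at the ``generic'' points of that sphere --- fails as stated: the maximum principle (or a Cauchy integral over a polydisc) controls $|F_\Phi(\lambda)|$ only through the supremum over the \emph{entire} boundary, and the boundary points lying close to some hyperplane $H_\beta$ are precisely those where no estimate is available; you may not discard a non-generic part of the boundary, and averaging does not help since the bad set has positive measure and you have no a priori bound on it. The standard repair --- which is the content of the cited lemma --- is to divide one root at a time and argue in one complex variable: if $|\langle\lambda,\alpha\rangle|\leqq 1$, restrict to the line $z\mapsto\lambda+z\alpha$, on which $\langle\lambda+z\alpha,\alpha\rangle=\langle\lambda,\alpha\rangle+z\langle\alpha,\alpha\rangle$, and apply the maximum principle on a disc of fixed radius chosen so that the \emph{whole} boundary circle satisfies $|\langle\lambda+z\alpha,\alpha\rangle|\geqq 1$; the bounded shift $z\alpha$ only costs a factor $e^{Cr}$ and a bounded change in the polynomial weight, and the quotient still vanishes on the remaining hyperplanes, so the argument iterates. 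With that step corrected (or simply with the reference supplied, as the paper does), the rest of your proof is correct and coincides with the paper's.
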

\begin{proof} Let $\alpha \in\Delta^+$. Then
$\lambda \mapsto \tfrac{1}{(\lambda ,\alpha)} \Phi (\lambda )$
is holomorphic by Lemma \ref{le-PhZero}. According to
\cite{He1994}, Lemma 5.13 on page 288, it follows that
this function is also of exponential type $r$. Iterating this for each root it follows that
$F_\Phi$ is holomorphic of exponential type $r$. As $\varpi (w(\lambda ))=\det (w)\varpi (\lambda )$
it follows using the same arguments as in the proof of Lemma \ref{le-PhZero} that
$F_\Phi$ is $\widetilde W$--invariant. The surjectivity follow as
$F\mapsto \varpi (\lambda )F(\cdot +\rho)$ maps $\PW_r(\fh^*_\C)^{\widetilde{W}}$
into $\PW_r^\rho
(\fh^*_\C)^{\widetilde{W}}$.
\end{proof}
\begin{theorem}\label{th-PknSurjective} Let $r>0$ and assume that
$G_k$ propagates $G_n$. Then the map
\[\Phi \mapsto P^{k}_{n}(\Phi):=T_n^{-1}( T_k(\Phi)|_{\fh_{n,\C}^*})=
\frac{\varpi_n(\bullet )}{\varpi_n(\rho_n)} \left(\frac{\varpi_k (\rho_k)}{\varpi_k (\bullet )}
\Phi (\bullet -\rho_k)|_{\fh_{n,\C}^*}
\right)(\bullet +\rho_n)  \]
from $\PW_r^{\rho_k}(\fh_{k,\C}^*)^{\widetilde{W}_k}\to
\PW_r^{\rho_n}(\fh_{n,\C}^*)^{\widetilde{W}_n}$ is surjective.
\end{theorem}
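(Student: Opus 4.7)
The plan is to conjugate the problem through the isomorphisms of Lemma \ref{isoPWspaces} so that it becomes a pure restriction problem for ordinary (not $\rho$-shifted) Paley--Wiener spaces, and then invoke Theorem \ref{th-IsoPW1}.

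First I would spell out the definition: by construction,
\[
P^{k}_{n} \;=\; T_n^{-1} \circ \res \circ T_k ,
\]
where $\res : \PW_r(\fh_{k,\C}^*)^{\widetilde{W}_k} \to \PW_r(\fh_{n,\C}^*)^{\widetilde{W}_n}$ is ordinary restriction of functions from $\fh_{k,\C}^*$ to $\fh_{n,\C}^*$. Because $T_k$ and $T_n$ are linear isomorphisms between the corresponding $\rho$-shifted and unshifted Paley--Wiener spaces, surjectivity of $P^k_n$ is equivalent to surjectivity of $\res$. Note also that $\res$ actually lands in $\PW_r(\fh_{n,\C}^*)^{\widetilde{W}_n}$ because, under propagation, the restriction to $\fh_n$ of any element of $\widetilde{W}_{k,\fh_n}$ lies in $\widetilde{W}_n$, so $\widetilde{W}_k$-invariance forces $\widetilde{W}_n$-invariance of the restricted function.

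Next I would verify the hypotheses of Theorem \ref{th-IsoPW1} applied to the pair $(E, F) = (\fh_n, \fh_k)$ with reflection groups $W(E) = \widetilde{W}_n$ and $W(F) = \widetilde{W}_k$. Two things need to be checked:
\begin{enumerate}
\item $\widetilde{W}_{k,\fh_n}|_{\fh_n} = \widetilde{W}_n$;
\item the restriction map $\rS(\fh_k)^{\widetilde{W}_k} \to \rS(\fh_n)^{\widetilde{W}_n}$ is surjective.
\end{enumerate}
Both of these are precisely the content of Theorem \ref{th-AdmExtG/K}, applied in the group-as-symmetric-space realization $G \times G/\operatorname{diag}(G)$, under which the Cartan subalgebra $\fh$ plays the role of $\fa$ and the ordinary Weyl group plays the role of $W(\fg,\fa)$. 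Since $\widetilde{W}_k$ and $\widetilde{W}_n$ are finite reflection groups, Theorem \ref{th-IsoPW1} is directly applicable and gives the surjectivity of $\res$.

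Putting this together yields the theorem: given $\Psi \in \PW_r^{\rho_n}(\fh_{n,\C}^*)^{\widetilde{W}_n}$, set $G_0 := T_n(\Psi) \in \PW_r(\fh_{n,\C}^*)^{\widetilde{W}_n}$, choose a preimage $G \in \PW_r(\fh_{k,\C}^*)^{\widetilde{W}_k}$ with $G|_{\fh_{n,\C}^*} = G_0$ by Theorem \ref{th-IsoPW1}, and take $\Phi := T_k^{-1}(G)$; then $P^k_n(\Phi) = \Psi$.

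The main obstacle is essentially bookkeeping rather than substance: one must make sure that the $T$-conjugation is legitimate, i.e.\ that $\res$ actually sends $\widetilde{W}_k$-invariants into $\widetilde{W}_n$-invariants (uses propagation plus the normalization of $\widetilde{W}$ via the Dynkin involution $\sigma$ on type $D$ factors), and that the hypotheses of Theorem \ref{th-IsoPW1} really do transfer from the symmetric-space statement of Theorem \ref{th-AdmExtG/K} to the group setting via the identification $G \simeq G \times G / \operatorname{diag}(G)$. Once these identifications are in place, the proof is a one-line diagram chase.
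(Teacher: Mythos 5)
Your argument is correct and is essentially the paper's own proof, which simply cites Lemma \ref{isoPWspaces} and Theorem \ref{th-IsoPW1}: you conjugate by the isomorphisms $T_n$, $T_k$ to reduce to plain restriction of unshifted Paley--Wiener spaces and then apply Theorem \ref{th-IsoPW1}, whose hypotheses hold by Theorem \ref{th-AdmExtG/K} (equivalently the Weyl-group and invariant-polynomial restriction results of \cite{OW10}) in the group-as-symmetric-space setting. You merely make explicit the bookkeeping (invariance of the restricted function, verification of the hypotheses) that the paper leaves implicit.
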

\begin{proof} This follows from Lemma \ref{isoPWspaces} and Theorem \ref{th-IsoPW1}.
\end{proof}

Recall from Theorem \ref{inj-radius} that the injectivity radii
$R$ are the same for $G_k$ and $G_n$.  For $0<r<R$
we now define a map $C^{k}_{n} :C^\infty_{r}
(G_k)^{\widetilde G_k}\to C^\infty_{r}(G_n)^{\widetilde G_n}$ by
the commutative diagram using Gonzalez' theorem:
$$\xymatrix{C^\infty_{r}(G_k)^{\widetilde G_k} \ar[d]_{\cF_k}\ar[r]^{C^{k}_{n}}&
C^\infty_{r}(G_n)^{\widetilde G_n}\ar[d]^{\cF_n}
 \\
\PW_r^{\rho_k}(\fh_{k,\C})^{\widetilde W_k}\ar[r]_{P^{k}_{n}} & \PW_r^{\rho_n}
(\fh_{n,\C})^{\widetilde W_n} \\
}\, .
$$
\begin{theorem}\label{th-CknSurjective} If $G_k$ propagates $G_n$ and $0<r<R$
then
$$C^{k}_{n} : C^\infty_{r}(G_k)^{\widetilde G_k} \to
C^\infty_{r}(G_n)^{\widetilde G_n}$$ is surjective.
\end{theorem}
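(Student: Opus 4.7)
The plan is essentially to read off the conclusion from the commutative diagram that defines $C^k_n$, together with the Paley--Wiener characterizations already in hand. The key observation is that once the vertical arrows in that diagram are bijections and the bottom arrow is surjective, the top arrow is forced to be surjective as well.

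First, I would verify that for $0 < r < R$ the Fourier transforms
\[
\cF_k : C^\infty_r(G_k)^{\widetilde{G}_k} \longrightarrow \PW_r^{\rho_k}(\fh_{k,\C}^*)^{\widetilde{W}_k}, \qquad \cF_n : C^\infty_r(G_n)^{\widetilde{G}_n} \longrightarrow \PW_r^{\rho_n}(\fh_{n,\C}^*)^{\widetilde{W}_n}
\]
are linear isomorphisms. This is Gonzalez' Theorem \ref{t: Gonzalez} together with the uniqueness of the holomorphic extension $\Phi_f$ recorded from \cite{OS}, which applies in the range $r < R$. Both groups may be used with the same bound $r < R$ because Theorem \ref{inj-radius} establishes that $R$ depends only on the classical series and not on the rank, and propagation of $G_n$ into $G_k$ preserves the type of the root system.

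Next, I would invoke Theorem \ref{th-PknSurjective} to obtain surjectivity of the bottom horizontal arrow
\[
P^k_n : \PW_r^{\rho_k}(\fh_{k,\C}^*)^{\widetilde{W}_k} \longrightarrow \PW_r^{\rho_n}(\fh_{n,\C}^*)^{\widetilde{W}_n}.
\]
The defining commutative diagram reads $\cF_n \circ C^k_n = P^k_n \circ \cF_k$, so
\[
C^k_n = \cF_n^{-1} \circ P^k_n \circ \cF_k.
\]
This is a composition of a bijection, a surjection, and a bijection, and is therefore surjective. Hence $C^k_n : C^\infty_r(G_k)^{\widetilde{G}_k} \to C^\infty_r(G_n)^{\widetilde{G}_n}$ is surjective as claimed.

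There is no genuine obstacle here beyond the three preceding results: the content of the theorem has already been distributed to Theorem \ref{inj-radius} (to make the single condition $0 < r < R$ apply to both $G_k$ and $G_n$), Gonzalez' Theorem combined with the uniqueness from \cite{OS} (to promote the Fourier transforms to bijections in this range), and Theorem \ref{th-PknSurjective} (to give the surjectivity on the Paley--Wiener side, which itself rests on the polynomial-restriction surjectivity of Theorem \ref{th-AdmExtG/K} via Theorem \ref{th-IsoPW1} after passing through the isomorphism $T$ of Lemma \ref{isoPWspaces}). The only mildly subtle point to keep in mind while writing it up is that if $G_k$ or $G_n$ fails to be simple one must apply all three ingredients factor by factor, but since the hypothesis of propagation was formulated to respect the decomposition into irreducible factors, this causes no difficulty.
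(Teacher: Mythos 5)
Your proposal is correct and follows essentially the same route as the paper: the map $C^k_n$ is defined precisely by the commutative diagram $\cF_n \circ C^k_n = P^k_n \circ \cF_k$, and the paper likewise deduces surjectivity directly from Gonzalez' Theorem \ref{t: Gonzalez} (giving the vertical bijections for $0<r<R$, with $R$ common to $G_k$ and $G_n$ by Theorem \ref{inj-radius}) together with the surjectivity of $P^k_n$ from Theorem \ref{th-PknSurjective}.
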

\begin{proof} This follows from Theorem \ref{t: Gonzalez} and
Theorem \ref{th-PknSurjective}.
\end{proof}

\begin{theorem} \label{projsys}
Let $r>0$ and assume that $G_k$ propagates $G_n$. Then
the sequences $(\PW_r^{\rho_n} (\fh_{n,\C}^*)^{\widetilde W_n},
P^{k}_{n})$ and $(C^\infty_{r}(G_n)^{\widetilde G_n}, C^{k}_{n})$
form projective systems and
$$
\PW_{r}^{\rho_\infty}(\fh_{\infty,\C})^{\widetilde W_\infty}
  :=\varprojlim \, \PW_r^{\rho_n} (\fh_{n,\C}^*)^{\widetilde W_n}
  \text{ and } C^\infty_{r}(G_\infty )^{\widetilde G_\infty}
  :=\varprojlim \, C_{r} ^\infty (G_n)^{\widetilde G_n}
$$
are nonzero.
\end{theorem}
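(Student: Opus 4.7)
The plan is to imitate the argument in Theorem \ref{th-projectiveLimitPW}, upgrading it with the surjectivity from Theorems \ref{th-PknSurjective} and \ref{th-CknSurjective}, and then verify the compatibility conditions that make $(\PW_r^{\rho_n}(\fh_{n,\C}^*)^{\widetilde W_n}, P^k_n)$ and $(C^\infty_r(G_n)^{\widetilde G_n}, C^k_n)$ honest projective systems.

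First I would check that the assignments $n \mapsto k$ actually give projective systems. For the $\PW^\rho$--spaces the map $P^k_n$ is, by definition, $T_n^{-1} \circ (\cdot)|_{\fh_{n,\C}^*} \circ T_k$, where $T_m:\PW_r^{\rho_m}(\fh_{m,\C}^*)^{\widetilde W_m}\to \PW_r(\fh_{m,\C}^*)^{\widetilde W_m}$ is the isomorphism from Lemma \ref{isoPWspaces}. Since ordinary restriction satisfies $(\cdot|_{\fh_{k,\C}^*})|_{\fh_{n,\C}^*}=(\cdot)|_{\fh_{n,\C}^*}$ whenever $\fh_n\subseteqq \fh_k\subseteqq \fh_l$, the identity $T_k\circ T_k^{-1}=\id$ in the middle collapses the composition to give $P^k_n \circ P^l_k = P^l_n$, and obviously $P^n_n=\id$. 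For $(C^\infty_r(G_n)^{\widetilde G_n}, C^k_n)$ the same compatibility is then forced by the commutative diagram defining $C^k_n$ together with the injectivity of the Fourier transforms $\cF_n$ granted by Gonzalez' theorem (Theorem \ref{t: Gonzalez}).

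Next, to see the limits are nonzero I would run a recursive lifting. Fix any index $n_0$ and any nonzero $\Phi_{n_0} \in \PW_r^{\rho_{n_0}}(\fh_{n_0,\C}^*)^{\widetilde W_{n_0}}$ (such elements exist, e.g.\ by taking $T_{n_0}^{-1}$ of any nonzero compactly supported $\widetilde W_{n_0}$--invariant Paley--Wiener function). Using Theorem \ref{th-PknSurjective} applied to each consecutive pair $(G_{k},G_{k+1})$, choose $\Phi_{k+1}\in \PW_r^{\rho_{k+1}}(\fh_{k+1,\C}^*)^{\widetilde W_{k+1}}$ with $P^{k+1}_{k}(\Phi_{k+1})=\Phi_{k}$. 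Setting $\Phi_m$ arbitrarily (e.g.\ zero) for $m<n_0$ yields an element of $\varprojlim \PW_r^{\rho_n}(\fh_{n,\C}^*)^{\widetilde W_n}$ whose $n_0$--component is the nonzero $\Phi_{n_0}$. The same recursive construction, using Theorem \ref{th-CknSurjective} in place of Theorem \ref{th-PknSurjective}, shows $C^\infty_r(G_\infty)^{\widetilde G_\infty}\neq \{0\}$; alternatively one can transport nonvanishing from the Paley--Wiener side through the diagram defining $C^k_n$.

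The only step requiring genuine care is the verification that $P^k_n$ maps into the right space and is transitive; once one unwinds the definition of $T$ using Lemma \ref{isoPWspaces}, this is formal. The hardest part, which is really the content of the theorem, is already bundled into Theorem \ref{th-PknSurjective} (and hence into Theorem \ref{th-AdmExtG/K} on restriction of Weyl group invariants), so for this statement the work reduces to a clean bookkeeping argument plus one recursive lift.
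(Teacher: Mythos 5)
Your proposal follows essentially the same route as the paper: the paper's proof of Theorem \ref{projsys} is simply the observation that it follows from Theorems \ref{th-PknSurjective} and \ref{th-CknSurjective}, with the recursive-lifting pattern already laid out in Theorem \ref{th-projectiveLimitPW}, and your verification of transitivity of the maps $P^k_n = T_n^{-1}\circ\res\circ T_k$ and $C^k_n = \cF_n^{-1}\circ P^k_n\circ \cF_k$ is the correct (if routine) bookkeeping behind that citation.

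One step as written would fail: after lifting $\Phi_{n_0}\mapsto\Phi_{n_0+1}\mapsto\cdots$, you say one may set $\Phi_m$ \emph{arbitrarily} (e.g.\ zero) for $m<n_0$. That does not produce an element of $\varprojlim$, since compatibility forces $\Phi_m = P^{n_0}_m(\Phi_{n_0})$ for every $m<n_0$, and this is nonzero in general, so the choice $\Phi_m=0$ violates the defining relations of the projective limit. The fix is immediate: either define $\Phi_m := P^{n_0}_m(\Phi_{n_0})$ for $m<n_0$, or note that the tail $\{m\geqq n_0\}$ is cofinal in the index set, so a compatible family over the tail already determines a unique element of the full projective limit; its $n_0$--component is the nonzero $\Phi_{n_0}$, which gives the nonvanishing claim. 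With that correction your argument is complete and matches the paper's.
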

\begin{proof} This follows from Theorem \ref{th-PknSurjective} and
Theorem \ref{th-CknSurjective}.
\end{proof}
\begin{remark}\label{inf-holo}{\rm
We can view elements $\Phi \in \PW_{r}^{\rho_\infty}(\fh_{\infty,\C})^{\widetilde W_\infty}$ as
holomorphic functions on $\fh_{\infty,\C}^*$ when we view $\fh_{\infty,\C}^*$ as
the spectrum of $\varprojlim  \PW_r^{\rho_n} (\fh_{n,\C}^*)$.
Furthermore, we have a commutative diagram where all maps are surjective
$$\xymatrix{\cdots & C^\infty_{r}(G_n)^{\widetilde G_n}\ar[d]_{\cF_n}&
C^\infty_{r}(G_{n+1})^{\widetilde G_{n+1}}
\ar[d]_{\cF_{n+1}}\ar[l]_{C^{n+1}_{n}}&\ar[l]_{\phantom{XXX} C^{n+2}_{n+1}}
\quad \cdots &
C_{r}^\infty (G_\infty)^{\widetilde G_\infty}\ar[d]_{\cF_\infty}
 \\
\cdots & \PW_r^{\rho_n}(\fh^*_{n,\C})^{\widetilde{W}_n}&  \PW_r^{\rho_{n+1}}(\fh^*_{n+1,\C})^{\widetilde{W}_{n+1}}
 \ar[l]^{P^{n+1}_{ n}}
&\ar[l]^{\phantom{XXXX} P^{n+2}_{n+1}} \quad \cdots & \PW_r^{\rho_\infty}(\fh^{\infty *}_{ \C} )^{\widetilde{W}_\infty}\\
}\, .
$$
}{}\hfill $\diamondsuit$
\end{remark}

\section{Spherical Representations of Compact Groups}\label{sec5}
\setcounter{equation}{0}

\noindent
In the next sections we discuss theorems of Paley-Wiener type for
compact symmetric spaces.  We start by an overview over spherical
representations, spherical functions and
the spherical Fourier transform. Most of the material can be found in
\cite{W2010} and \cite{W2009} but
in part with different proofs. The notation will be as in Section \ref{sec2},
and $G$ or $G_n$ will always stand for a compact group.
In particular,
$M_n=G_n/K_n$ where $G_n$ is a connected compact semisimple Lie group
with Lie algebra $\fg_n$, which for simplicity we
assume is simply connected. The result can easily be
formulated for arbitrary compact symmetric spaces by following the
arguments in \cite{OS}. We will assume that $M_k$ propagates $M_n$.
We denote by $r_k$ and $r_n$ the respective real ranks
of $M_k$ and $M_n$. As always we fix compatible $K_k$-- and
$K_n$--invariant inner products on $\fs_k$ respectively $\fs_n$.

As in Section \ref{sec2}
let $\Sigma_n = \Sigma_n(\mfg_n , \ga_n)$ denote the
system of restricted roots of $\ga_{n,\C}$ in $\mfg_{n,\C}$.
Let $\fh_n$ be a $\theta_n$-stable Cartan subalgebra such
that $\fh_n\cap \fs_n =\fa_n$. Let
$\Delta_n=\Delta (\fg_{n,\C}, \fh_{n,\C})$. Recall
that $\Sigma_n\subset i\ga_n^*$.
We choose positive subsystems
$\Delta_n^+$ and $\Sigma^+_n $ so that
$\Sigma_n^+\subseteqq \Delta_n^+|_{\fa_n}$,
$\Delta_n^+\subseteqq \Delta_k^+|_{\fh_{n,\C}}$,  and
$\Sigma^+_n \subset \Sigma^+_k|_{\ga_n}$.
Consider the reduced root system
$$
\Sigma_{n,2}=\{\alpha\in\Sigma_n\mid 2\alpha\not\in\Sigma_n\}
$$
and its positive subsystem $\Sigma_{n,2}^+ := \Sigma_{n,2} \cap \Sigma^+_n$.
Let
$$
\Psi_{n,2} = \Psi_{2}(\mfg_n , \ga_n) = \{\alpha_{n,1}, \dots , \alpha_{n,r_n}\}
$$
denote the set of simple roots for $\Sigma_{n,2}^+$.
We note the following simple facts; they follow from
the explicit realization (\ref{rootorder}) of the root systems discussed
in \cite[Lemma 1.9]{OW10}.
\begin{lemma} \label{X} Suppose that the $M_n$ are irreducible.
Let $r_n=\dim \fa_n$, the rank of $M_n$.
Number the simple root systems $\Psi_{n, 2}$ as in $(\ref{rootorder})$.
Suppose that $M_k$ propagates $M_n$.
If $j\leqq r_n$ then $\alpha_{k,j}$ is the unique element of
$\Psi_{k,2}$ whose restriction to $\fa_n$ is $\alpha_{n,j}$.
\end{lemma}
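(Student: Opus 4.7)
The plan is to verify the claim by a direct case-by-case inspection of the classical root systems, using the standard coordinate realization of $\Sigma_{k,2}$ and $\Sigma_{n,2}$ together with the geometry of propagation. Since $M_n$ is assumed irreducible, $\Psi_{n,1/2}$ is of type $A_{r_n}$, $B_{r_n}$, $C_{r_n}$, or $D_{r_n}$, and by the definition of propagation $\Psi_{k,1/2}$ is of the same type with $r_k \geqq r_n$, obtained by appending nodes at the \emph{left} end of the Dynkin diagram in the numbering $(\ref{rootorder})$. Since $\Psi_{n,2}$ and $\Psi_{n,1/2}$ differ only possibly at the right-end node $\alpha_1$ (and likewise for $k$), and propagation never alters that node, it suffices to prove the assertion with $\Psi_{\bullet,2}$ replaced throughout by the version relevant to each type.

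First I would fix coordinate functionals $e_1,\ldots,e_{r_k}$ on $\fa_k$ adapted to the standard realization (as reviewed in \cite[Lemma 1.9]{OW10}), chosen so that $\fa_n$ is the subspace annihilated by $e_{r_n+1},\ldots,e_{r_k}$. In each of the four types the simple roots are written in the familiar form
\[
\alpha_{k,j} = e_j - e_{j+1}\quad (1 \leqq j \leqq r_k - 1),
\]
with the right-end root $\alpha_{k,1}$ replaced by $e_1$ in type $B$, by $2e_1$ in type $C$, and supplemented by $\alpha_{k,2}=e_1+e_2$ in type $D$. The ordering convention in $(\ref{rootorder})$ puts $\alpha_{k,j}$ at position $j$ counted from the right, so the propagation hypothesis guarantees that $\alpha_{k,j}=\alpha_{n,j}$ \emph{as linear functionals on $\fa_k$} for $j\leqq r_n$; restricting to $\fa_n$ (on which the functionals $e_i$ with $i>r_n$ vanish) then yields $\alpha_{k,j}|_{\fa_n} = \alpha_{n,j}$ at once.

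For uniqueness, I would observe that any other simple root $\alpha_{k,i}$ with $i > r_n$ involves only coordinates $e_{i}, e_{i+1}$ (or, in type $D$ with $i=2$, $e_1\pm e_2$; but that case does not arise when $r_n\geqq 2$, which is forced by the Dynkin diagram conventions in $(\ref{rootorder})$). Since $i > r_n$ forces at least one of those indices to exceed $r_n$, the restriction $\alpha_{k,i}|_{\fa_n}$ either vanishes or equals $e_j|_{\fa_n}$ for some $j \leqq r_n$, and in either case is visibly not of the form $\alpha_{n,j}=e_j-e_{j+1}$ or its type-specific right-end variants. Hence $\alpha_{k,j}$ is the unique element of $\Psi_{k,2}$ restricting to $\alpha_{n,j}$.

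The main step that requires care is book-keeping in type $D$, where the right end of the diagram branches into $\alpha_{k,1}=e_1-e_2$ and $\alpha_{k,2}=e_1+e_2$: one must confirm that the constraint $r_n \geqq 4$ (already imposed in the discussion preceding Theorem~\ref{th-AdmExtG/K}) keeps the branching \emph{inside} the $n$-level subsystem, so that no newly added left-end simple root of $\Psi_{k,2}$ can accidentally match $\alpha_{n,1}$ or $\alpha_{n,2}$ upon restriction. Once this is checked, the remaining cases $A$, $B$, $C$ are immediate from the coordinate formulas above.
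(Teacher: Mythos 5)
Your overall strategy---work with the explicit coordinate realization of the classical root systems and check the restriction case by case, after noting that $\Psi_{\bullet,2}$ and $\Psi_{\bullet,1/2}$ differ only at the right-end node---is exactly the route the paper intends (it simply points to the realization behind (\ref{rootorder}) and \cite[Lemma 1.9]{OW10}). The problem is that the coordinatization you actually write down is inconsistent with the numbering of (\ref{rootorder}), and the inconsistency lands precisely on the one index where the lemma has any content. In the paper's convention the special root is $\alpha_1$ at the \emph{right} end and propagation adds nodes at \emph{high} indices; if $\fa_n$ is the subspace on which $e_{r_n+1},\ldots,e_{r_k}$ vanish, the adapted simple system must be $\alpha_{k,1}=e_1$ (type $B$), $2e_1$ (type $C$), the pair $\{e_2-e_1,\,e_1+e_2\}$ (type $D$), and $\alpha_{k,j}=e_j-e_{j-1}$ for the generic nodes, so that the newly added coordinates occur only in the newly added roots. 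Your formula $\alpha_{k,j}=e_j-e_{j+1}$ combined with the special root at $e_1$ is not a simple system at all (the chain is disconnected at the special node, e.g.\ $\langle e_1,\,e_2-e_3\rangle=0$), it leaves the left-end root $\alpha_{k,r_k}$ unspecified, and---decisively---it gives $\alpha_{k,r_n}=e_{r_n}-e_{r_n+1}$, whose restriction to $\fa_n$ is $e_{r_n}$, not $\alpha_{n,r_n}$. Thus the assertion you lean on, that ``propagation guarantees $\alpha_{k,j}=\alpha_{n,j}$ as linear functionals'' for all $j\leqq r_n$, is exactly the point requiring correct bookkeeping, and under your own coordinates it fails at the junction $j=r_n$.

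With the corrected realization the argument is short and does close: for $j\leqq r_n$ each $\alpha_{k,j}$ involves only $e_1,\ldots,e_{r_n}$ (the junction case being $e_{r_n}-e_{r_n-1}$), hence restricts to $\alpha_{n,j}$; the first added node $\alpha_{k,r_n+1}=e_{r_n+1}-e_{r_n}$ restricts to $-e_{r_n}$, which is not a simple root of $\Sigma_{n,2}$, and all further added nodes restrict to $0$. Note also that in type $A$ (where $\fa$ is the trace-zero hyperplane) the first added node restricts to $-f_{r_n+1}|_{\fa_n}$, which is neither $0$ nor of the form $e_j|_{\fa_n}$, so your uniqueness dichotomy ``vanishes or equals $e_j|_{\fa_n}$'' does not literally cover that case, although the conclusion (it is not a simple root of $\Sigma_{n,2}$) still holds. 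Your remark about type $D$, that $r_n\geqq 4$ keeps the branch nodes inside the level-$n$ subsystem, is correct and is the right thing to check there.
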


Since $M_k$ propagates $M_n$ each irreducible factor of $M_k$ contains
at most  one simple factor of $M_n$.  In particular if $M_n$ is not irreducible
then $M_k$ is not irreducible, but we still can number the simple
roots so that Lemma \ref{X} applies.

We denote the
positive Weyl chamber in $\fa_n$ by $\fa_n^+$ and similarly for
$\fa_k$.  For $\mu \in \gL^+(G_n)$ let
$$
V_{\mu}^{K_n}=\{v\in V_{\mu} \mid
\pi_{\mu}(k)v = v \text{ for all } k\in K_n\}.
$$
We identify $i\fa_n^*$ with $\{\mu \in i\fh_n^*\mid \mu|_{\fh_n\cap \fk_n}=0\}$ and
similar for $\fa_n^*$ and $\fa_{n,\C}^*$.
With this identification in mind set
$$
\Lambda^+(G_n,K_n)
= \left \{\mu\in i\fa_n^* \left |
\tfrac{ (\mu ,\alpha ) }{ ( \alpha ,\alpha )}\in \Z^+ \text{ for all }
\alpha \in \Sigma^+ \right . \right \}.
$$
Most of the time we will simply write  $\Lambda^+_n$ instead of $\Lambda^+(G_n,K_n)$.

Since $G_n$ is connected and $M_n$ is simply connected it follows that
$K_n$ is connected. As $K_n$ is compact there exists a unique  $G_n$--invariant measure
$\mu_{M_n}$ on $M_n$ with $\mu_{M_n}(M_n)=1$.  For brevity we sometimes write $dx$ instead of
$d\mu_{M_n}$.

\begin{theorem}[Cartan-Helgason]\label{t-CH} Assume that $G_n$ is
compact and simply connected. Then the following are equivalent.
\begin{enumerate}
\item $\mu \in \Lambda^+_n$,
\item $\displaystyle{V_{\mu}^{K_n} \ne 0}$,
\item $ \displaystyle{\pi_{\mu}}$ is a subrepresentation of the
representation of  $G_n$  on  $L^2(M_n)$.
\end{enumerate}
When those conditions hold, $\dim V_{\mu}^{K_n} = 1$ and
$\pi_{\mu}$ occurs with multiplicity $1$ in the representation of
$G_n$ on $L^2(M_n)$.
\end{theorem}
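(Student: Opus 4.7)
The plan is to prove (2) $\Leftrightarrow$ (3) first by Peter--Weyl, then handle the equivalence with (1) via an Iwasawa-type argument on the complexification, and finally extract multiplicity one as a byproduct.

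First I would establish (2) $\Leftrightarrow$ (3) together with the formula for the multiplicity of $\pi_\mu$ in $L^2(M_n)$. By the Peter--Weyl theorem, $L^2(G_n) \cong \bigoplus_{\mu \in \Lambda^+(G_n)} V_\mu \otimes V_\mu^*$ as a $(G_n\times G_n)$-module under left-right translation. Taking $K_n$-invariants on the right factor identifies $L^2(M_n) = L^2(G_n)^{K_n}$ with $\bigoplus_\mu V_\mu \otimes (V_\mu^*)^{K_n}$, so $\pi_\mu$ appears in $L^2(M_n)$ with multiplicity $\dim (V_\mu^*)^{K_n} = \dim V_\mu^{K_n}$ (using the $G_n$-invariant Hermitian form to identify $V_\mu^*$ with $\overline{V_\mu}$).

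Next I would prove (1) $\Rightarrow$ (2). For $\mu \in \Lambda^+_n$, extend $\mu$ to an element of $\Lambda^+(G_n)$ by requiring it to vanish on $\mathfrak{h}_n \cap \mathfrak{k}_n$, which is possible because the integrality condition is stated only with respect to restricted roots (and the unrestricted integrality then follows from standard facts relating $\Delta_n$ and $\Sigma_n$ for a symmetric pair). Let $v_\mu$ be a highest weight vector in $V_\mu$. The vector $v_0 := \int_{K_n} \pi_\mu(k) v_\mu\,dk$ is $K_n$-fixed; the nonvanishing of $v_0$ is equivalent, upon pairing with $v_\mu$, to the nonvanishing of the spherical function integral $\int_{K_n} a(k)^{\mu}\,dk$, where $a(\cdot)$ is the Iwasawa $A$-projection in the noncompact dual $G^d$. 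Positivity of $a(k)^\mu$ on the open dense Bruhat cell then yields $v_0 \neq 0$.

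For the harder direction (2) $\Rightarrow$ (1), suppose $0 \neq v \in V_\mu^{K_n}$. Using the complexification $G_n^{\mathbb{C}}$, I would pass to the noncompact dual $G^d = K_n \cdot \exp(i\mathfrak{s}_n)$ and exploit the Iwasawa decomposition $G^d = K_n A^d N^d$. The matrix coefficient $\Phi(g) := \langle \pi_\mu(g) v_\mu, v\rangle$ is left $N^d$-invariant (up to a character) and right $K_n$-invariant, hence on $A^d$ takes the form $\Phi(\exp H) = c \cdot e^{\mu(H)}$. Because $\Phi$ must be single-valued and bounded on the compact group $K_n \cap (A^d N^d)$-orbits obtained from applying Weyl group reflections through root subgroups, the exponents $\mu(H_\alpha)$ for coroots $H_\alpha$ of restricted roots $\alpha$ must lie in $\mathbb{Z}^+$, giving $\mu \in \Lambda^+_n$. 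Moreover, the same Iwasawa argument shows that $v$ is uniquely determined up to scalar by its component along $v_\mu$, which proves $\dim V_\mu^{K_n} = 1$, whence multiplicity one in $L^2(M_n)$ via the equivalence with (3).

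The principal obstacle will be the rigorous execution of the third paragraph: making precise the passage from the compact real form $G_n$ to the noncompact dual $G^d$ in order to import the Iwasawa decomposition, and extracting the integrality of $\mu$ on the correct set of (restricted) coroots without getting tangled up in the possible discrepancy between $\Delta_n$ and $\Sigma_n$. In practice I would probably just cite the classical Cartan--Helgason theorem (Helgason, \emph{Groups and Geometric Analysis}, Chap.~V) rather than reprove it here.
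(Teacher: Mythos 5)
Your fallback is exactly what the paper does: its entire proof is the citation ``See \cite[Theorem 4.1, p.~535]{He1984}'', i.e.\ the classical Cartan--Helgason theorem in Helgason's \emph{Groups and Geometric Analysis}, so your proposal matches the paper's approach. The Peter--Weyl and Iwasawa sketch you give is essentially the standard argument behind that citation (with the $(2)\Rightarrow(1)$ step left heuristic, as you yourself note), so nothing further is required.
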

\begin{proof} See \cite[Theorem 4.1, p. 535]{He1984}.
\end{proof}

\begin{remark} {\rm If $G_n$  is compact but not simply connected one has
to replace $\Lambda_n^+$  by sub semi--lattices of
weights $\mu$ such that the group
homomorphism $\exp (X)\mapsto e^{\mu (X)}$ is well
defined on the maximal torus
$H_n$, and then the proof of Theorem \ref{t-CH} remains valid.
}\hfill $\diamondsuit$
\end{remark}

Define linear functionals $\xi_{n,j}\in i\ga_n^*$ by
\begin{equation}\label{fundclass1}
\frac{ \langle \xi_{n,i},\alpha_{n,j} \rangle }
{\langle \alpha_{n,j},\alpha_{n,j} \rangle} = \delta_{i,j} \text{ for }
1 \leqq j \leqq r_n\ \ .
\end{equation}
Then for $\alpha \in \Sigma_{n,2}^+$
$$\frac{ \langle  \xi_{n,i},\alpha \rangle  }
{ \langle  \alpha,\alpha \rangle  }\in \Z^+\, .$$
If $\alpha \in \Sigma^+\setminus \Sigma_{n,2}^+$, then
$2\alpha \in \Sigma_{n,2}^+$ and
$$\tfrac{ \langle  \xi_{n,i},\alpha \rangle }
{ \langle  \alpha,\alpha \rangle  }
= 2\tfrac{ \langle  \xi_{n,i},2\alpha \rangle  }
{ \langle 2\alpha,2 \alpha \rangle  }\in \Z^+\, .$$
Hence $\xi_{n,i}\in \Lambda^+_n$. The weights $\xi_{n,j}$ are the
\textit{class 1 fundamental weights for}
$(\mfg_n,\gk_n)$. We set
$$\Xi_n=\{\xi_{n,1},\ldots ,\xi_{n,r_n}\}\, .$$
For $I=(k_1,\ldots ,k_{r_n})\in (\Z^+)^{r_n}$ define
$\mu_I:=\mu(I)=k_1\xi_{n,1}+\ldots +k_{r_n}\xi_{n,r_n}$.
\begin{lemma}\label{le-KnWeights} If $\mu \in i\ga_n^*$ then
$\mu \in \Lambda^+ _n$ if and only if
$\mu =\mu_I$ for some $I\in (\Z^+)^{r_n}$.
\end{lemma}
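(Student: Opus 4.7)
The plan is to use that $\{\xi_{n,1},\ldots,\xi_{n,r_n}\}$ is a basis of $i\fa_n^*$ and that the pairing $\mu\mapsto \langle \mu,\alpha_{n,j}\rangle/\langle \alpha_{n,j},\alpha_{n,j}\rangle$ is exactly the coordinate functional dual to $\xi_{n,j}$, so that both directions of the equivalence reduce to reading off coefficients.

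First I would observe that the simple roots $\{\alpha_{n,1},\ldots,\alpha_{n,r_n}\}$ of $\Sigma_{n,2}^+$ form a basis of $i\fa_n^*$ (this is standard; they are a basis because $r_n=\dim\fa_n$). Consequently the defining relations (\ref{fundclass1}) for $\xi_{n,i}$ have a unique solution, and $\{\xi_{n,1},\ldots,\xi_{n,r_n}\}$ is a basis of $i\fa_n^*$. Thus any $\mu\in i\fa_n^*$ can be written uniquely as $\mu=\sum_i c_i\,\xi_{n,i}$ with $c_i\in\R$, and applying (\ref{fundclass1}) to both sides yields
\[
\frac{\langle \mu,\alpha_{n,j}\rangle}{\langle \alpha_{n,j},\alpha_{n,j}\rangle}=c_j\quad\text{for each } j=1,\ldots,r_n.
\]

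For the ``only if'' direction, if $\mu\in \Lambda^+_n$ then $\langle \mu,\alpha\rangle/\langle \alpha,\alpha\rangle\in\Z^+$ holds for every $\alpha\in\Sigma^+$, and in particular for $\alpha=\alpha_{n,j}\in\Sigma_{n,2}^+\subseteq\Sigma^+$. Hence each $c_j$ is a nonnegative integer, and taking $I=(c_1,\ldots,c_{r_n})\in(\Z^+)^{r_n}$ gives $\mu=\mu_I$.

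For the ``if'' direction I would invoke the fact, established in the paragraph preceding the lemma, that each class~$1$ fundamental weight $\xi_{n,i}$ already lies in $\Lambda^+_n$. The set $\Lambda^+_n$ is visibly closed under addition and under multiplication by nonnegative integers (because the defining condition $\langle \cdot,\alpha\rangle/\langle \alpha,\alpha\rangle\in\Z^+$ is preserved under both operations, as sums and nonnegative integer multiples of elements of $\Z^+$ stay in $\Z^+$). Therefore every $\Z^+$-linear combination $\mu_I=k_1\xi_{n,1}+\cdots+k_{r_n}\xi_{n,r_n}$ with $I\in(\Z^+)^{r_n}$ lies in $\Lambda^+_n$, completing the equivalence.

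There is no real obstacle here; the lemma is essentially a bookkeeping fact, once the key nontrivial input — that the $\xi_{n,i}$ are themselves class~$1$ (the content of the computation just before the lemma, which uses that $\Sigma^+\setminus\Sigma_{n,2}^+$ consists of halves of roots in $\Sigma_{n,2}^+$) — is in hand. The only point to be a little careful about is to note that the basis property of $\{\xi_{n,i}\}$ follows automatically from the nondegeneracy of $\langle\cdot,\cdot\rangle$ on $i\fa_n^*$ together with the basis property of $\{\alpha_{n,j}\}$, so that the expansion $\mu=\sum c_i\xi_{n,i}$ is justified and unique.
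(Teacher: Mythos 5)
Your proof is correct and follows essentially the same route as the paper, which simply states that the lemma "follows directly from the definition of $\xi_{n,j}$", relying (as you do) on the preceding computation that each $\xi_{n,i}\in\Lambda^+_n$ together with the dual-basis relation (\ref{fundclass1}). Your write-up just makes explicit the bookkeeping (unique expansion in the basis $\{\xi_{n,i}\}$ and closure of $\Lambda^+_n$ under $\Z^+$-combinations) that the paper leaves implicit.
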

\begin{proof} This follows directly from the definition of
$\xi_{n,j}$.
\end{proof}
\begin{lemma}\label{le-ResInLambdan} Suppose that $M_k$ is a
propagation of $M_n$. Let
$I_k=(m_1,\ldots ,m_k)\in (\Z^+)^{r_k}$ and
$\mu =\mu_{I_k}$. Then
$\mu|_{\fa_n}\in \Lambda^+_n$. In particular
$\xi_{k,j}|_{\fa_n}\in \Lambda^+ _n$ for $1 \leqq j \leqq r_k$.
\end{lemma}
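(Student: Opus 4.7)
The plan is to use Lemma~\ref{le-KnWeights}: $\Lambda^+_n$ consists of all non-negative integer combinations of the class 1 fundamental weights $\xi_{n,1},\ldots,\xi_{n,r_n}$. Since $\mu = \sum_{i=1}^{r_k} m_i\, \xi_{k,i}$ with $m_i\in\Z^+$ and restriction is linear, it is enough to show that each $\xi_{k,i}|_{\fa_n}$ lies in $\Lambda^+_n$. In fact I will prove the sharper statement
\[
\xi_{k,i}|_{\fa_n} \;=\; \begin{cases} \xi_{n,i} & \text{if } i\leqq r_n,\\ 0 & \text{if } i>r_n,\end{cases}
\]
which also yields the ``In particular'' assertion at once.

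The crucial ingredient beyond Lemma~\ref{X} is the following strengthening: for $j\leqq r_n$ the coroot $H_{\alpha_{k,j}}\in\fa_k$ actually lies in $\fa_n$ and therefore coincides with $H_{\alpha_{n,j}}$ under the compatible inner products on $\fa_n\subseteqq \fa_k$. Geometrically this says that $\alpha_{k,j}$ not only restricts to $\alpha_{n,j}$ on $\fa_n$ but also vanishes on the orthogonal complement of $\fa_n$ in $\fa_k$; equivalently, the reflections $s_{\alpha_{k,j}}$ for $j\leqq r_n$ preserve $\fa_n$ and restrict to $s_{\alpha_{n,j}}\in W_n$. This is consistent with, and in fact extracted from, the case-by-case inspection used to prove Theorem~\ref{th-AdmExtG/K} in \cite{OW10}: in each classical type the simple roots appended at the left end of the Dynkin diagram introduce new $\fa_k$-coordinates, while the old simple roots $\alpha_{k,1},\ldots,\alpha_{k,r_n}$ remain expressed purely in $\fa_n$-coordinates.

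With this coroot identity in hand, the rest is a one-line computation. For $i\in\{1,\ldots,r_k\}$ and $j\in\{1,\ldots,r_n\}$,
\[
\frac{\langle \xi_{k,i}|_{\fa_n},\, \alpha_{n,j}\rangle}{\langle \alpha_{n,j},\alpha_{n,j}\rangle}
\;=\; \frac{\xi_{k,i}(H_{\alpha_{k,j}})}{\langle H_{\alpha_{k,j}},H_{\alpha_{k,j}}\rangle}
\;=\; \frac{\langle \xi_{k,i},\alpha_{k,j}\rangle}{\langle \alpha_{k,j},\alpha_{k,j}\rangle}
\;=\; \delta_{ij}.
\]
For $i\leqq r_n$ this matches the defining relations (\ref{fundclass1}) of $\xi_{n,i}$, so $\xi_{k,i}|_{\fa_n}=\xi_{n,i}$ by uniqueness of the dual basis. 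For $i>r_n$ the pairing with every $\alpha_{n,j}$ vanishes; since $\{\alpha_{n,j}\}_{j=1}^{r_n}$ spans $i\fa_n^{*}$, this forces $\xi_{k,i}|_{\fa_n}=0$. Summing with coefficients $m_i$ yields $\mu|_{\fa_n}=\sum_{i=1}^{r_n} m_i\,\xi_{n,i}\in\Lambda^+_n$, completing the proof.

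The main obstacle is the coroot claim in the second paragraph: it is genuinely stronger than what Lemma~\ref{X} literally asserts, and the cleanest verification I see is the same Dynkin-diagram case analysis that underlies the results of \cite{OW10}. Once that input is granted, everything else is formal.
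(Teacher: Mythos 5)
Your argument is correct in substance, but it is a genuinely different route from the paper's. The paper proves Lemma \ref{le-ResInLambdan} representation-theoretically, with no case analysis: it takes the cyclic module $W=\langle\pi_\mu(G_n)v_\mu\rangle$ generated by a highest weight vector, observes that every irreducible $G_n$-summand of $W$ has highest weight $\mu|_{\fa_n}$, and then uses $(v_\mu,e_\mu)\neq 0$ to show the $K_k$-fixed vector $e_\mu$ projects to a nonzero $K_n$-fixed vector in one of these summands; the Cartan--Helgason theorem (Theorem \ref{t-CH}) then gives $\mu|_{\fa_n}\in\Lambda^+_n$ directly. You instead compute restrictions of the class-$1$ fundamental weights: the identity $\xi_{k,j}|_{\fa_n}=\xi_{n,j}$ for $j\leqq r_n$ is exactly the paper's Lemma \ref{simple-res} (proved there from the explicit classical fundamental weights), and your additional claims --- that $H_{\alpha_{k,j}}\in\fa_n$ for $j\leqq r_n$, equivalently $s_{\alpha_{k,j}}|_{\fa_n}=s_{\alpha_{n,j}}$, and hence $\xi_{k,i}|_{\fa_n}=0$ for $i>r_n$ --- are true for the classical realizations and are consistent with the paper's later assertion (Section \ref{sec3}) that $s_{\alpha_{n,j}}\mapsto s_{\alpha_{k,j}}$ defines an inclusion $\widetilde W_n\hookrightarrow\widetilde W_{k,\fa_n}$ with $\iota_{k,n}(s)|_{\fa_n}=s$. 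You correctly flag that this coroot statement is stronger than Lemma \ref{X} and needs the same kind of case-by-case verification as in \cite{OW10}, so your proof is complete only modulo that check. The trade-off: your route yields the sharper explicit formula $\mu_{I_k}|_{\fa_n}=\sum_{j\leqq r_n}m_j\xi_{n,j}$ (which is in the spirit of Theorem \ref{l-inductiveSystemOfRep}(1)), while the paper's argument is shorter, uniform in the propagation, and independent of the classification of classical root systems.
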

\begin{proof}
Let $v_\mu \in V_\mu$ be a nonzero highest weight vector and
$e_\mu \in V_\mu$ a $K_k$--fixed unit vector. Denote by
$W=\langle\pi_\mu (G_n)v_\mu \rangle$
the cyclic $G_n$-module generated by $v_\mu $ and let $\mu_n=\mu|_{\ga_n}$.

Write $W=\bigoplus_{j=1}^s W_j$ with $W_j$ irreducible.
If $W_j$ has highest weight $\nu_j \ne \mu$ then $v_\mu  \perp W_j$
so $\langle\pi_\mu (G_n)v_\mu \rangle \perp W_j$, contradicting
$W_j \subset W =\bigoplus W_i$.  Now each $W_j$ has highest weight
$\mu$.  Write $v_\mu =v_1+\ldots +v_s$ with $0 \ne v_j\in W_j$.
As $( v_\mu ,e_{\mu} ) \not= 0$ it follows that
$( v_j,e_{\mu}) \not= 0$ for some $j$. But then the projection
of $e_\mu$ onto $W_j$ is a non-zero $K_n$ fixed vector in $W_j^{K_n}\not= 0$
and hence $\mu |_{\fa_n}\in\Lambda^+_n$.
\end{proof}

\begin{lemma}[\cite{W2010}, Lemma 6]\label{simple-res}
Assume that $M_k$ is a propagation of $M_n$.  If $1 \leqq j \leqq r_n$ then $\xi_{k,j}$
is the unique element of $\Xi_k$ whose restriction of $\ga_n$ is $\xi_{n,j}$.
\end{lemma}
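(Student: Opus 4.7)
The plan is to verify, for $j\leqq r_n$, the characterization $(\ref{fundclass1})$ of $\xi_{n,j}$ at the restricted weight $\xi_{k,j}|_{\fa_n}$, and simultaneously to check that no other $\xi_{k,l}$, $1\leqq l\leqq r_k$, has restriction $\xi_{n,j}$. The computation is cleanest after identifying $\fa_n^*$ with $\fa_n$ and $\fa_k^*$ with $\fa_k$ via the inner products. Write $H_{k,l}^\sharp\in\fa_k$ for the dual of $\xi_{k,l}$, and $A_{k,i}^\sharp,\,A_{n,i}^\sharp$ for the duals of $\alpha_{k,i},\,\alpha_{n,i}$, and let $\pi\colon\fa_k\to\fa_n$ be orthogonal projection. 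By compatibility of the inner products, the restriction $\xi_{k,l}|_{\fa_n}$ corresponds to $\pi(H_{k,l}^\sharp)\in\fa_n$, and similarly Lemma \ref{X} gives $\pi(A_{k,i}^\sharp)=A_{n,i}^\sharp$ for $i\leqq r_n$.

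The key structural input, one step beyond Lemma \ref{X}, is that for $1\leqq i\leqq r_n$ the vector $A_{k,i}^\sharp$ already lies in $\fa_n$; equivalently, $\alpha_{k,i}$ vanishes on $\fa_n^\perp\subseteqq\fa_k$. Granting this, $A_{k,i}^\sharp = A_{n,i}^\sharp$ and hence $\langle\alpha_{k,i},\alpha_{k,i}\rangle_k=\langle\alpha_{n,i},\alpha_{n,i}\rangle_n$. For any $1\leqq l\leqq r_k$ and $1\leqq i\leqq r_n$ the self-adjointness of $\pi$ and the defining relation $(\ref{fundclass1})$ for $\xi_{k,l}$ then give
\[
\frac{\langle\xi_{k,l}|_{\fa_n},\alpha_{n,i}\rangle_n}{\langle\alpha_{n,i},\alpha_{n,i}\rangle_n}
\;=\;\frac{\langle\pi(H_{k,l}^\sharp),A_{k,i}^\sharp\rangle_k}{\langle\alpha_{k,i},\alpha_{k,i}\rangle_k}
\;=\;\frac{\langle H_{k,l}^\sharp,A_{k,i}^\sharp\rangle_k}{\langle\alpha_{k,i},\alpha_{k,i}\rangle_k}
\;=\;\delta_{il},
\]
where the middle step uses $A_{k,i}^\sharp\in\fa_n$. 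Taking $l=j\leqq r_n$, this is exactly the characterization $(\ref{fundclass1})$ of $\xi_{n,j}$, so $\xi_{k,j}|_{\fa_n}=\xi_{n,j}$.

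For uniqueness, the same display read with $l>r_n$ yields $\langle\xi_{k,l}|_{\fa_n},\alpha_{n,i}\rangle=0$ for every $1\leqq i\leqq r_n$; since $\{\alpha_{n,1},\dots,\alpha_{n,r_n}\}$ is a basis of $\fa_n^*$, this forces $\xi_{k,l}|_{\fa_n}=0\neq\xi_{n,j}$, while for $1\leqq l\leqq r_n$ with $l\neq j$ we have $\xi_{k,l}|_{\fa_n}=\xi_{n,l}\neq\xi_{n,j}$. Hence $\xi_{k,j}$ is the unique element of $\Xi_k$ restricting to $\xi_{n,j}$. The main obstacle is precisely the refinement of Lemma \ref{X} invoked above: the definition of propagation and Lemma \ref{X} formally supply only the restriction identity $\alpha_{k,i}|_{\fa_n}=\alpha_{n,i}$, whereas the argument needs the stronger assertion that $\alpha_{k,i}$ has no $\fa_n^\perp$ component for $i\leqq r_n$. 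This is a structural feature of the classical propagations recorded in $(\ref{rootorder})$ and is verified by direct case analysis on each series $A,B,C,D$, as carried out in \cite{OW10}.
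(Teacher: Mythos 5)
Your argument is correct, and it also proves slightly more than the statement (namely $\xi_{k,l}|_{\fa_n}=0$ for $l>r_n$), but it proceeds differently from the paper. The paper disposes of the lemma in two lines: the case $\fa_k=\fa_n$ is immediate, and for $r_n<r_k$ it simply invokes the explicit coordinate formulas for the fundamental weights of the classical root systems (Goodman--Wallach, p.~102) and reads off the restriction --- e.g.\ in type $C$ one has $\xi_{l}=2(f_l+\cdots+f_r)$ in the paper's right-to-left numbering, from which both existence and uniqueness of the restriction are visible by inspection. You instead avoid computing the $\xi$'s altogether and derive the restriction identity from the defining relation (\ref{fundclass1}) by a duality/projection argument, reducing everything to the single structural fact that for $i\leqq r_n$ the coroot of $\alpha_{k,i}$ already lies in $\fa_n$ (equivalently $\alpha_{k,i}$ kills $\fa_n^\perp$). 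That fact is genuinely needed (both for the middle equality in your display and for the equality of the denominators $\langle\alpha_{k,i},\alpha_{k,i}\rangle_k=\langle\alpha_{n,i},\alpha_{n,i}\rangle_n$), it is strictly stronger than the restriction statement of Lemma \ref{X}, and you are right that it does not follow formally from the definition of propagation; but it does hold in the explicit realizations of (\ref{rootorder}) used throughout (in types $B$, $C$, $D$ the roots $\alpha_{k,i}$, $i\leqq r_n$, involve only $f_1,\dots,f_{r_n}$, and in type $A$ the vector $e_{i+1}-e_i$ is trace-free and supported in the first $r_n+1$ coordinates, while $\fa_n^\perp\cap\fa_k$ consists of vectors constant there), so your deferral to the case check in \cite{OW10} is legitimate and is of the same nature as the paper's citation. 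What your route buys is a cleaner verification: the case analysis is pushed onto a simpler statement about simple roots rather than onto the fundamental weights themselves, and the uniqueness assertion comes out of the same computation rather than a separate inspection; what the paper's route buys is brevity, since the classical formulas are tabulated in the literature.
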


\begin{proof} This is clear when $\fa_k=\fa_n$.
If $r_n<r_k$ it follows from the explicit construction
of the fundamental weights for classical root system; see
\cite[p. 102]{GW1998}.
\end{proof}

\begin{lemma}\label{resmult}
Assume that $\mu_k \in \Lambda^+_k$ is a combination
of the first $r_n$ fundamental weights,  $\mu =
\sum_{j=1}^{r_n} k_j \xi_{k,j}$.
Let
$\mu_n:=\mu|_{\ga_n}=\sum_{j=1}^{r_n}k_j \xi_{n,j}\, $.
If $v$ is a nonzero highest weight vector in $V_{\mu_k}$ then
$\langle\pi_{\mu_k} (G_n)v\rangle$ is irreducible and isomorphic to $V_{\mu_n}$.
Furthermore, $\pi_{\mu_n}$
occurs with multiplicity one in $\pi_{\mu_k}|_{G_n}$.
\end{lemma}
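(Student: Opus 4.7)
The plan is to realize $v$ as a $\fg_n$-highest-weight vector of weight $\mu_n$ and apply standard highest-weight theory to identify $W\cong V_{\mu_n}$; for the multiplicity statement, I will analyze the $\fh_n$-weight structure of $V_{\mu_k}$, with the special form $\mu_k=\sum_{j\leqq r_n}k_j\xi_{k,j}$ playing a crucial role.

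First, I verify that $v$ has $\fh_n$-weight $\mu_n$. Each class 1 fundamental weight $\xi_{k,j}$ vanishes on $\fh_k\cap\fk_k$, so $\mu_k$ vanishes on $\fh_n\cap\fk_n$; and by Lemma \ref{simple-res}, $\xi_{k,j}|_{\fa_n}=\xi_{n,j}$ for $1\leqq j\leqq r_n$, so $\mu_k|_{\fa_n}=\sum k_j\xi_{n,j}=\mu_n$. Combined, $\mu_k|_{\fh_n}=\mu_n$. Next, under the compatibility $\Delta_n^+\subseteqq\Delta_k^+|_{\fh_{n,\C}}$, every $\fg_n$-positive root vector $X_\beta\in\fg_{n,\C,\beta}$ ($\beta\in\Delta_n^+$) decomposes as a $\C$-linear combination of $\fg_k$-positive root vectors $X_\alpha\in\fg_{k,\C,\alpha}$ with $\alpha|_{\fh_n}=\beta$; each such $X_\alpha$ annihilates the $\fg_k$-highest-weight vector $v$, so $X_\beta v=0$. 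Thus $v$ is a $\fg_n$-highest-weight vector of weight $\mu_n\in\Lambda_n^+$, and $W=\langle\pi_{\mu_k}(G_n)v\rangle$ is a finite-dimensional cyclic highest-weight $\fg_n$-module with dominant integral highest weight $\mu_n$, hence isomorphic to the irreducible $V_{\mu_n}$.

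For multiplicity one, $\dim\Hom_{G_n}(V_{\mu_n},V_{\mu_k})$ equals the dimension of the space $H$ of $\fg_n$-highest-weight vectors of weight $\mu_n$ in $V_{\mu_k}$. To establish $\dim H=1$ I would show: (i) $\mu_n$ is a maximal $\fh_n$-weight of $V_{\mu_k}$, so $H$ coincides with the $\fh_n$-weight space $V_{\mu_k}(\mu_n)$; and (ii) the only $\fh_k$-weight $\nu$ of $V_{\mu_k}$ restricting to $\mu_n$ on $\fh_n$ is $\mu_k$ itself, whence $V_{\mu_k}(\mu_n)=V_{\mu_k}(\mu_k)=\C v$.

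The main obstacle lies in (i) and (ii), both of which rely essentially on the specialized form $\mu_k=\sum_{j\leqq r_n}k_j\xi_{k,j}$. Dropping this hypothesis---for example allowing $\xi_{k,j}$ with $j>r_n$, corresponding to the simple roots adjoined at the left of the Dynkin diagram under propagation---produces additional $\fh_k$-weights that either restrict to $\mu_n$ on $\fh_n$ or become strictly larger than $\mu_n$ in the $\Delta_n^+$-order, causing (ii) or (i) to fail (and indeed the multiplicity can exceed one in such cases). I expect the verification to proceed by case-by-case inspection of the classical families via Lemma \ref{X} and Table \ref{symmetric-case-class}, possibly by reducing to the fundamental representations $V_{\xi_{k,j}}$ (for which a direct check shows $V_{\xi_{n,j}}$ appears in $V_{\xi_{k,j}}|_{G_n}$ with multiplicity one for $j\leqq r_n$) and then propagating via Cartan-product arguments to the general $V_{\mu_k}$.
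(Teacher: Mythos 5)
Your first half is fine: showing that $v$ is a $\fg_n$-highest weight vector of $\fh_n$-weight $\mu_n$ (via $\xi_{k,j}|_{\fa_n}=\xi_{n,j}$ and vanishing on $\fh_n\cap\fk_n$) and concluding $\langle\pi_{\mu_k}(G_n)v\rangle\cong V_{\mu_n}$ is a complete argument, essentially the same starting point as the paper. The genuine gap is in the multiplicity statement: your claims (i) and (ii) are exactly the substance of the lemma, and you leave them unproven, deferring to a ``case-by-case inspection \dots\ reducing to fundamental representations and then propagating via Cartan-product arguments.'' That fallback is not a routine step: branching multiplicities are not multiplicative under Cartan products, so knowing that $V_{\xi_{n,j}}$ occurs once in $V_{\xi_{k,j}}|_{G_n}$ does not yield multiplicity one of $V_{\mu_n}$ in $V_{\mu_k}|_{G_n}$ without precisely the weight-space control you postponed. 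As written, the proof stops at the point where the hypothesis $\mu_k=\sum_{j\leqq r_n}k_j\xi_{k,j}$ has to do its work.

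What is missing is the paper's uniform argument, which proves your (ii) directly. Any $\fh_k$-weight of $V_{\mu_k}$ restricting to $\mu_n$ on $\fh_n$ has the form $\mu_k-\gamma$ with $\gamma$ a nonnegative integer combination of simple roots of $\Delta^+(\fg_{k,\C},\fh_{k,\C})$ and $\gamma|_{\fh_n}=0$; using the explicit numbering (\ref{rootorder}), the simple roots of $\fg_k$ pair nonpositively with the simple roots of $\fg_n$, and since the coefficients in $\gamma$ are nonnegative one concludes that every simple root actually occurring in $\gamma$ vanishes on $\fh_n$. If $\gamma\neq 0$, one then produces a positive root $\beta$ of $(\fg_k,\fh_k)$ with $\beta|_{\fh_n}=0$ such that $\mu_k-\beta$ is a weight of $V_{\mu_k}$. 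Here is where the special form of $\mu_k$ enters: such a $\beta$ restricts to $\fa_k$ as a combination of the restricted simple roots $\alpha_{k,j}$ with $j>r_n$, hence is orthogonal to $\xi_{k,1},\dots,\xi_{k,r_n}$ and therefore to $\mu_k$; consequently $s_\beta(\mu_k-\beta)=\mu_k+\beta$ would also be a weight of $V_{\mu_k}$, contradicting that $\mu_k$ is the highest weight. Thus the only weight restricting to $\mu_n$ is $\mu_k$ itself, so every $\fg_n$-highest weight vector of weight $\mu_n$ lies in $\C v$, giving multiplicity one (and, in the paper's ordering of the argument, irreducibility as well). You correctly sensed where the hypothesis is needed, but the reflection/orthogonality argument that exploits it is the missing idea.
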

\begin{proof} Each $G_n$--irreducible summand $W$
in $\langle\pi_{\mu_k} (G_n)v\rangle$ has highest weight $\mu_n$.
Fix one such $G_n$--submodule $W$ and let $w \in W$ be a nonzero highest
weight vector. Write $w=w_1+\ldots +w_s$ where each $w_j$ is of some
$\gh_k$--weight $\mu_k - \sum_ik_{j,i}\beta_i$ and where each
$\beta_i$ is a simple root in $\Sigma^+(\mfg_{k},\gh_{k})$
and each $k_{j,i}\in \Z^+$. As $\mu_k|_{\gh_n}=\mu_{n}$ it
follows that
$\langle \sum_{i}k_{j,i}\beta_i|_{\fh_n}, \alpha \rangle = 0$
for all $\alpha \in \Delta (\fg_n,\fh_n)$. Thus
$\sum_{i}k_{j,i}\beta_i|_{\fh_n}=0$. In view of (\ref{rootorder})
each $\langle \beta_i,\alpha_j\rangle \leqq 0$ for
$\alpha_j \in \Delta (\fg_n,\fh_n)$ simple (specifically
$\langle \beta_i,\alpha_j\rangle = 0$ unless $\beta_i = f_{c+1} - f_c$ and
$\alpha_j = f_c - f_{c-1}$, for some $c$, in which case
$\langle \beta_i,\alpha_j\rangle = -1$).  Since every $k_{j,i}\in \Z^+$
now $\langle \beta_i , \alpha_j \rangle = 0$ for each
$\alpha_j \in \Delta (\fg_n,\fh_n)$ simple.  Thus $\beta_i|_{\gh_n}=0$.

Because of the compatibility
of the positive systems $\Delta^+(\mfg_{k,\C},\gh_{k,\C})$ and
$\Delta^+(\mfg_{n,\C},\gh_{n,\C})$ there exists a $\beta \in
\Delta^+(\mfg_{k,\C},\gh_{k,\C})$, $\beta|_{\gh_n}=0$,  such that
$\mu_k -\beta$ is a weight in $V_{\mu_n}$.
Writing $\beta$ as a sum of simple roots, we see that each of the
simple roots
has to vanish on $\ga_n$ and hence the restriction to $\ga_{k}$
can not contain any of the simple roots $\alpha_{k,j}$, $ j=1,\ldots ,r_n$.
But then $\beta $ is perpendicular to the fundamental weights
$\xi_{k,j}$, $j=1,\ldots , r_n$.
Hence $s_\beta (\mu_n- \beta)=\mu_n +\beta$ is also a weight, contradicting
the fact that $\mu_n$ is the highest weight. (Here $s_\beta$ is the reflection
in the hyperplane $\beta =0$.) This shows that $\pi_{\mu_n}$ can
only occur once in $\langle \pi_{\mu_k }(G_n)v\rangle$. In particular,
$\langle \pi_{\mu_k}(G_n)v\rangle$ is irreducible.
\end{proof}

Lemma \ref{resmult} allows us to form direct system of representations, as
follows.  For $\ell \in \N$ denote by $0_\ell = (0,\ldots ,0)$ the zero vector
in $\R^\ell$.  For
$I_{n}=(k_1,\ldots ,k_{r_n})\in (\Z^+)^{r_n}$ let
\begin{equation}\label{I-notation}
\begin{aligned}
\bullet\ &\mu_{I,n}= {\sum}_{j=1}^{r_n}k_j\xi_{n,j}\in \Lambda^+_n;\\
\bullet\ &\pi_{I,n} = \pi_{\mu_{I,n}} \text{ the corresponding spherical
representation};\\
\bullet\ &V_{I,n} = V_{\mu_{I,n}}  \text{ a fixed Hilbert space for the
representation } \pi_{I,n};\\
\bullet\ &v_{I,n} = v_{\mu_{I,n}}  \text{ a highest weight unit vector in }
V_{ I,n};\\
\bullet\ &e_{I,n} = e_{\mu_{I,n}} \text{ a } K_n\text{--fixed unit vector in }
V_{I,n}.
\end{aligned}
\end{equation}

We collect our results in the following Theorem.  Compare
\cite[Section 3]{W2010}.

\begin{theorem}\label{l-inductiveSystemOfRep}
Let $M_k$ propagate $M_n$ and
let $\pi_{I,n}$ be an irreducible representation of $G_n$
with highest weight $\mu_{I,n}\in\Lambda^+_n$.
Let $I_k=(I_n,0_{r_k-r_n})$. Then the following hold.
\begin{enumerate}
\item $\mu_{I,k}\in \Lambda^+ _k$ and
$\mu_{I,k}|_{\ga_n}=\mu_{I,n}$.
\item The $G_n$-submodule of $V_{I,k}$ generated
by $v_{I,k}$ is irreducible.
\item The multiplicity of $\pi_{I,n}$ in
$\pi_{I,k}|_{G_n}$ is $1$, in other words there is an unique
$G_n$--intertwining operator $T_{k}^n: V_{I,n}\to V_{I,k}$ such that
$T_{k}^n(\pi_{I,n} (g)v_{I,n})=\pi_{I,k}(g)v_{I,k}\, .$
\end{enumerate}
\end{theorem}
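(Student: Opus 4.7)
The plan is that this theorem is essentially a packaging result: almost all of the substantive content has already been established in Lemmas \ref{simple-res} and \ref{resmult}, and the task is to assemble them correctly.

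For part (1), I would first observe that $\mu_{I,k}\in\Lambda^+_k$ is immediate from its definition as a nonnegative integer combination of the class-1 fundamental weights $\xi_{k,j}$ (Lemma \ref{le-KnWeights}). The identity $\mu_{I,k}|_{\fa_n}=\mu_{I,n}$ then follows by restricting term-by-term: since $I_k=(I_n,0_{r_k-r_n})$, only the indices $1\leqq j\leqq r_n$ contribute, and by Lemma \ref{simple-res} each such $\xi_{k,j}$ restricts to $\xi_{n,j}$. Thus
\[\mu_{I,k}|_{\fa_n}=\sum_{j=1}^{r_n}k_j\,\xi_{k,j}\big|_{\fa_n}=\sum_{j=1}^{r_n}k_j\,\xi_{n,j}=\mu_{I,n}.\]

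For parts (2) and (3), I would apply Lemma \ref{resmult} directly to the highest weight $\mu_{I,k}=\sum_{j=1}^{r_n}k_j\xi_{k,j}$, which is exactly a combination of the first $r_n$ fundamental weights as required by the hypothesis of that lemma. Lemma \ref{resmult} then gives both conclusions simultaneously: the cyclic $G_n$-module $\langle\pi_{I,k}(G_n)v_{I,k}\rangle\subset V_{I,k}$ is irreducible with highest weight $\mu_{I,n}$ (hence isomorphic to $V_{I,n}$), and $\pi_{I,n}$ occurs with multiplicity one in $\pi_{I,k}|_{G_n}$.

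To produce the intertwining operator $T_k^n$ of part (3), I would invoke Schur's lemma together with the multiplicity-one statement: the map $T_k^n$ is uniquely characterized by $T_k^n(v_{I,n})=v_{I,k}$ combined with $G_n$-equivariance, since $v_{I,n}$ is a cyclic vector for $\pi_{I,n}$. Concretely, set $T_k^n\bigl(\pi_{I,n}(g)v_{I,n}\bigr):=\pi_{I,k}(g)v_{I,k}$; this is well defined because any linear relation $\sum c_i\,\pi_{I,n}(g_i)v_{I,n}=0$ must, via the isomorphism $V_{I,n}\cong\langle\pi_{I,k}(G_n)v_{I,k}\rangle$ established in part (2), force $\sum c_i\,\pi_{I,k}(g_i)v_{I,k}=0$, and it extends by linearity and continuity to all of $V_{I,n}$. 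Uniqueness follows from multiplicity one.

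The only step that requires any thought is matching the hypothesis of Lemma \ref{resmult} to the present setting, which is automatic from the definition $I_k=(I_n,0_{r_k-r_n})$; there is no real obstacle, since the heavy lifting (showing that no extra $G_n$-isotypic components with highest weight $\mu_{I,n}$ can appear in the cyclic module) was already done in the proof of Lemma \ref{resmult} using the compatibility of positive systems and the fact that the added simple roots in $\Psi_{k,2}\setminus\Psi_{n,2}$ sit at the left end of the Dynkin diagram.
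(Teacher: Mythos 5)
Your proposal is correct and follows essentially the same route as the paper, which presents this theorem explicitly as a collection of the preceding results: part (1) from the definition of $\mu_{I,k}$ together with Lemmas \ref{le-KnWeights} and \ref{simple-res}, and parts (2) and (3) by applying Lemma \ref{resmult} to $\mu_{I,k}=\sum_{j=1}^{r_n}k_j\xi_{k,j}$. Your construction of $T_k^n$ via cyclicity of the highest weight vector and multiplicity one is the standard way to make the intertwiner explicit and is consistent with the paper's statement.
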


\begin{remark} {\rm From this point on, when $m\leqq q$ we will always
assume that the Hilbert space $V_{I,m}$ is realized inside $V_{I,q}$ as
$\langle \pi_{I,q}(G_m)v_{I,q}\rangle$.}\hfill $\diamondsuit$
\end{remark}

\section{Spherical Fourier Analysis and the Paley-Wiener Theorem}\label{sec6}

\noindent
In this section we give a short description of
the spherical functions and Fourier analysis on compact
symmetric spaces. Then we state and prove results for
limits of compact symmetric spaces analogous to those
of Section \ref{sec3}.

For the moment let $M=G/K$ be a compact symmetric space. We use
the same notation as in the last section but without the index
$n$. As usual we view functions on $M$ as right $K$--invariant
functions on $G$ via $f(g)=f(g\cdot x_o)$, $x_o=eK$.
For $\mu \in \Lambda^+$ denote by $\deg (\mu )$ the dimension
of the irreducible representation $\pi_\mu$. We note that $\mu \mapsto \deg (\mu )$ extends to a polynomial function on $\fa_\C^*$. Fix a unit $K$-fixed
vector $e_\mu$ and define
\[\psi_\mu (g)=( e_\mu ,\pi_\mu (g)e_\mu) \, .\]
Then $\psi_\mu$ is positive definite spherical function
on $G$, and every positive definite spherical function is
obtained in this way for a suitable representation $\pi$. Define
\begin{equation}\label{def-ell}
\ell^2_d(\Lambda^+ )=
\left \{\{a_\mu \}_{\mu\in\Lambda^+ }\,  \left|\,
a_\mu\in\C\,\,\mathrm{and} \,\,
{\sum}_{\mu\in\Lambda^+}\deg(\mu )|a_\mu |^2<\infty\right.\right \}\, .
\end{equation}
Then $\ell^2_d(\Lambda^+ )$ is a Hilbert space with
inner product
\[((a(\mu))_\mu ,(b(\mu ))_\mu)={\sum}_{\mu\in \Lambda^+} \deg(\mu) a(\mu )\overline{b(\mu )}\, .\]
For $f\in C^\infty (M)$ define the spherical Fourier transform of $f$,
$\cS (f)=\widehat{f} :\Lambda^+ \to \C$ by
\[\widehat{f}(\mu )=(f,\psi_\mu)=\int_M f(g )(\pi_\mu (g)e_\mu ,e_\mu)\, dg
=(\pi_\mu (f)e_\mu ,e_\mu)\]
where $\pi_\mu (f)$ denotes the operator valued Fourier transform of $f$,
$\pi_\mu (f)=\int_G f(g) \pi_\mu (g)\, dg$.
Then the sequence $\cS(f)=(\cS(f)(\mu ))_\mu$ is in $\ell^2_d(\Lambda^+(G,K))$ and
$\|f\|^2=\|\cS (f)\|^2$. Finally, $\cS$ extends by continuity
to an unitary isomorphism
\[\cS : L^2(M)^K\to \ell^2_d(\Lambda^+)\, .\]
We denote by $\cS_\rho$ the map
\begin{equation}\label{def-Srho}
\cS_\rho (f)(\mu )=\cS(f)(\mu -\rho )\, ,\quad \mu\in \Lambda^+ +\rho\, .
\end{equation}
If $f$ is smooth, then $f$ is given by
\[f(x)={\sum}_{\mu\in\Lambda^+}\deg(\mu )\cS (f) (\mu )\psi_\mu (x)=
{\sum}_{\mu\in\Lambda^+} \deg(\mu )\cS_\rho (f)(\mu +\rho )\psi_{\mu}(x)\, .\]
and the series converges in the usual Fr\'echet topology on $C^\infty (M)^K$.
In general, the sum has to be interpreted as an $L^2$ limit.

Let
\[
\Omega:=\{X\in \fa\mid
|\alpha (X)|<\pi/2 \text{ for all } \alpha\in\Sigma\}\, .\]
For $\lambda \in\fa_\C^*$ let $\varphi_\lambda$ denote the
spherical function on the dual symmetric space of noncompact
type $G^d/K$, where the Lie algebra of $G^d$ is given
by $\fg^d:=\fk+i\fs$. Then $\varphi_\lambda$ has a
holomorphic extension as $K_\C$--invariant function
to $K_\C\exp (2\Omega )\cdot x_o\subset
G_\C/K_\C$, cf. \cite[Theorem 3.15]{Opd}, see also
\cite{BOP2005} and \cite{KS2005}. Furthermore
\[\overline{\psi_\mu (x)}=\varphi_{\mu +\rho}(x^{-1})=\varphi_{-\mu -\rho}(x)\]
for $x\in K_\C\exp (2\Omega )\cdot x_o$.
We can therefore define a holomorphic function $\lambda \mapsto \cS_\rho (f)(\lambda )$
by
\begin{equation}\label{HolEx}
\cS_\rho (f)(\lambda )=\int_M f(x)\varphi_{\lambda }(x^{-1})\, dx
\end{equation}
as long as $f$ has support in $K_\C \exp (2\Omega)\cdot x_o$.
$\cS_\rho (f)$ is $W(\fg,\fa )$ invariant and
$\cS_\rho (f)(\mu )=\cS (f)(\mu -\rho )$ for all $\mu \in \Lambda^+(G,K)+\rho$.

Denote by $R$ the injectivity radius of the riemannian exponential map
$\Exp :\fs \to M$.   Following the arguments in  \cite{crit} we get:
\begin{theorem}\label{also-inj-radius}
The injectivity radius $R$ of the classical compact simply connected
riemannian symmetric spaces $M = G/K$, in the riemannian metric given by the
inner product $\langle X,Y\rangle = -\trace(XY)$ on $\fs$, depends
only on the type of the restricted reduced root system
$\Sigma_2(\fg_\C,\ga_\C)$.
It is $\sqrt{2}\,\pi$ for $\Sigma_2(\fg_\C,\ga_\C)$ of type $A$ or $C$ and is
$2\pi$ for $\Sigma_2(\fg_\C,\ga_\C)$ of type $B$ or $D$.
\end{theorem}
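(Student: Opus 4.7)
The plan is to adapt Crittenden's argument \cite{crit} (which underlies Theorem \ref{inj-radius} for compact Lie groups) to general compact simply connected riemannian symmetric spaces. By homogeneity the injectivity radius is constant on $M = G/K$, so it suffices to compute it at $x_o = eK$. For a compact simply connected symmetric space the cut locus of $x_o$ coincides with the first conjugate locus; hence $R$ equals the minimum, over all unit-speed geodesics from $x_o$, of the parameter value at the first conjugate point.

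Since every geodesic through $x_o$ is $K$-equivalent to one of the form $\gamma_H(t) = \Exp(tH)\cdot x_o$ with $H \in \fa$ and $|H|=1$, I would apply the Jacobi equation along $\gamma_H$ together with the root-space decomposition $\fs_\C = \fa_\C \oplus \bigoplus_{\alpha \in \Sigma^+} \fs_\alpha$. The curvature operator is scalar on each $\fs_\alpha$ with eigenvalue $|\alpha(H)|^2$, so the first conjugate point along $\gamma_H$ occurs at $t^*(H) = \pi/\max_{\alpha \in \Sigma}|\alpha(H)|$, whence
\[
R \;=\; \min_{|H|=1} t^*(H) \;=\; \frac{\pi}{\max_{\alpha \in \Sigma}\|\alpha\|^*},
\]
where $\|\alpha\|^*$ denotes the dual norm of $\alpha \in \fa^*$ with respect to $\langle X,Y\rangle = -\mathrm{Tr}(XY)|_{\fa}$.

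The remaining work is a case-by-case computation of $\max_{\alpha \in \Sigma}\|\alpha\|^*$ for each entry in Table \ref{symmetric-case-class}. I would use the standard matrix realization of each classical symmetric space to exhibit $\fa$ and the roots $\alpha$ explicitly as linear functionals, and then compute trace-form norms. The outcome should be: the longest root has dual norm $1/\sqrt{2}$ when $\Sigma_2$ is of type $A$ or $C$, yielding $R = \sqrt{2}\,\pi$; and dual norm $1/2$ when $\Sigma_2$ is of type $B$ or $D$, yielding $R = 2\pi$. Crucially, in the non-reduced cases ($AIII$ and $CII$ with $p<q$, and $DIII$ with $j$ odd), the "doubled" roots $2\alpha \in \Sigma \setminus \Sigma_2$ do not push the maximum past what is already contributed by roots of $\Sigma_2$; this is precisely what forces the dependence to be only on $\Sigma_2$.

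The main obstacle will be the careful bookkeeping of normalizations: the trace form $-\mathrm{Tr}(XY)$ restricted to $\fa$ must be correctly related to the standard pairing used in the classification of root systems, and this rescaling varies between series. A secondary difficulty is verifying, in each non-reduced case, that replacing $\Sigma$ by $\Sigma_2$ in the formula for $R$ yields the same answer; this requires the explicit classical realizations and is what ultimately establishes that $R$ depends only on the type of the \emph{reduced} root system $\Sigma_2$. Once these verifications are carried out uniformly within each classical series, the constancy of $R$ along each series (paralleling Theorem \ref{inj-radius}) follows immediately.
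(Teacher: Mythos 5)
Your proposal is correct and follows essentially the same route as the paper, which simply invokes Crittenden's results \cite{crit} (cut locus $=$ first conjugate locus on a simply connected compact symmetric space, first conjugate distance $\pi/\max_{\alpha\in\Sigma}|\alpha(H)|$ along $\Exp(tH)$, $H\in\fa$ a unit vector) and then performs the same case-by-case inspection of the classical restricted root systems in the trace-form normalization. One small correction that only simplifies your final verification: in the non-reduced cases it is the short roots $\alpha$ that lie in $\Sigma\setminus\Sigma_2$, while their doubles $2\alpha$ belong to $\Sigma_2$, so the restricted root of maximal norm automatically lies in $\Sigma_2$ and no separate check is required.
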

\begin{remark}{\em
Since $\Omega$ is given by $|\alpha(X)| < \pi/2$ and the interior of the
injectivity radius disk is given by $|\alpha(X)| < 2\pi$ the set $\Omega$
is contained in the open disk in $\gs$ of center $0$ and radius $R/4$.}
\hfill $\diamondsuit$
\end{remark}

Essentially as before, $\Br$ denotes the closed
metric ball in $M$ with center $x_o$ and radius $r$, and
$C^\infty_r(M)^{\widetilde{K}}$ denotes
the space of $\widetilde{K}$-invariant smooth functions on $M$ supported in $\Br$.

\begin{remark}\label{explain}{\rm
Theorem \ref{t: PW} below is, modulo a $\rho$-shift and $\widetilde{W}$-invariance,
Theorem 4.2 and Remark 4.3 of \cite{OS}. As pointed
out in \cite[Remark 4.3]{OS}, the known value for the
constant $S$ can be different in each part of the theorem.
In Theorem\ref{t: PW}(1) we need that $S<R$ and the closed ball in $\fs$ with
center zero and radius $S$ has to be contained in
$K_\C\exp (i\Omega )\cdot x_o$
to be able to use the estimates from \cite{Opd} for the spherical
functions to show that we actually end up in the Paley-Wiener space.

In Theorem \ref{t: PW}(2) we need only that $S<R$. Thus the constant in
(1) is smaller than the one in (2). That is used in part (3).
For Theorem \ref{t: PW}(4) we also need $\|X\|\leqq \pi/\|\xi_j\|$ for
$j=1,\ldots ,r$.} \hfill $\diamondsuit$
\end{remark}

\begin{theorem}[Paley-Wiener Theorem for Compact Symmetric Spaces]
\label{t: PW} Let the notation be as above. Then the following
hold.
\begin{enumerate}
\item[1.] There exists a constant $S>0$ such that,
for each $0<r<S$ and  $f\in C^\infty_{r}(M)^{\widetilde K}$, the
$\rho$-shifted spherical  Fourier transform $\cS_\rho (f) : \Lambda^+_n+\rho \to\C$
extends to a function in $\PW_r(\ga_\C^*)^{\widetilde W}$.
\item[2.]There exists a constant $S>0$ such that if $F\in\PW_r(\ga_\C^*)^{\widetilde W}$, $0<r<S$,
the function
\begin{equation}\label{defOfIrho}
f(x):={\sum}_{\mu\in\Lambda^+  } \deg (\mu ) F (\mu + \rho )\psi_{\mu} (x)
\end{equation}
is in $C^\infty_{r}(M)^{\widetilde K}$ and
$\cS_\rho {f}(\mu )=F (\mu )$.
\item[3.] For $S$ as in $(1.)$ define $\cI_\rho :
\PW_r(\ga_\C)^{\widetilde W}\to C^\infty_{r}(M)^{\widetilde K}$ by {\rm (\ref{defOfIrho})}.
Then  $\cI_\rho$ is surjective for all $0<r<S$.
\item[4.] There exists a constant $S>0$ such that
for all $0<r<S$ the map $\cS_\rho$ followed by holomorphic extension
defines a bijection $C_{r}(M)^{\widetilde K}\cong \PW_r(\ga_\C )^{\widetilde W}$.
\end{enumerate} \end{theorem}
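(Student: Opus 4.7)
The plan is to reduce the theorem to the compact-type Paley--Wiener theorem of \'Olafsson--Schlichtkrull \cite{OS} and then enhance the Weyl group invariance from $W$ to $\widetilde{W}$ via the compact analog of Theorem \ref{th-SphericalFctTildeWInv}. The $\rho$-shift is purely a re-indexing via (\ref{def-Srho}), so modulo these two modifications the content matches \cite[Theorem 4.2, Remark 4.3]{OS}.

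For part (1), I would first pick $S > 0$ small enough that $B_S$ lifts via $\Exp$ into $K_\C \exp(2\Omega) \cdot x_o$; this is possible because $\Omega$ is contained in the disk of radius $R/4$ in $\gs$. For $f \in C^\infty_r(M)^{\widetilde K}$ with $0 < r < S$, the formula (\ref{HolEx}) then makes sense for every $\lambda \in \fa_\C^*$ and defines a holomorphic extension of $\cS_\rho(f)$. Opdam's estimates on the holomorphic continuation of $\varphi_\lambda$ on $K_\C \exp(2\Omega) \cdot x_o$ give exponential growth of type $r|\Im\lambda|$, while repeated integration by parts against the smooth $f$ supplies arbitrary polynomial decay, placing $\cS_\rho(f)$ in $\PW_r(\fa_\C^*)$. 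Ordinary $W$-invariance is automatic from $\varphi_{w\lambda} = \varphi_\lambda$; the extension to $\widetilde{W}$ comes from $\widetilde{K}$-invariance of $f$ together with the compact analog of Theorem \ref{th-SphericalFctTildeWInv}: since $\widetilde{\sigma}$ preserves the invariant measure on $M$ and $\varphi_\lambda \circ \widetilde{\sigma} = \varphi_{\sigma \lambda}$, a change of variables yields $\cS_\rho(f)(\sigma \lambda) = \cS_\rho(f)(\lambda)$.

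For part (2), given $F \in \PW_r(\fa_\C^*)^{\widetilde W}$ with $r$ less than the (larger) constant allowed here, where only $S < R$ is required, the polynomial decay of $F$ combined with polynomial bounds on $\deg(\mu)$ and uniform estimates on $\psi_\mu$ ensures that the series (\ref{defOfIrho}) converges absolutely in every $C^N$-norm, giving $f \in C^\infty(M)$. The $\widetilde{W}$-invariance of $F$ together with $\psi_\mu \circ \widetilde{\sigma} = \psi_{\sigma\mu}$ yields $f \circ \widetilde{\sigma} = f$, so $f$ is $\widetilde{K}$-invariant. The crucial analytic statement, that $\Supp f \subseteqq B_r$, is precisely the content of \cite[Theorem 4.2]{OS} (with the $\rho$-shift absorbed into the indexing); this uses delicate estimates on the asymptotic behavior of $\psi_\mu$ along geodesics and is the main obstacle, which I would simply import. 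That $\cS_\rho(f)(\mu) = F(\mu)$ is then immediate from orthogonality of $\{\psi_\mu\}$ in $L^2(M)^K$.

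For parts (3) and (4), I would combine (1) and (2). Take $S$ to be the smaller constant from part (1), so both the integral formula for $\cS_\rho$ and the hypotheses of (2) are valid. Given $g \in C^\infty_r(M)^{\widetilde K}$, part (1) gives $F := \cS_\rho(g) \in \PW_r(\fa_\C^*)^{\widetilde W}$, and part (2) applied to $F$ produces $\cI_\rho(F) \in C^\infty_r(M)^{\widetilde K}$ with the same spherical Fourier coefficients as $g$; $L^2$ Fourier inversion on $M$ then forces $\cI_\rho(F) = g$, proving (3). For (4), shrinking $S$ once more so that $\|X\| \leqq \pi/\|\xi_j\|$ for each fundamental weight $\xi_j$ (as in Remark \ref{explain}) guarantees that a function in $\PW_r(\fa_\C^*)^{\widetilde W}$ is determined by its restriction to $\Lambda^+ + \rho$, so $\cS_\rho$ is injective on $C^\infty_r(M)^{\widetilde K}$; combined with surjectivity of $\cI_\rho$ from (3), this yields the asserted bijection.
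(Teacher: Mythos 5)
Your proposal is correct and follows essentially the same route as the paper, which proves the theorem by citing the local Paley--Wiener theorem of \'Olafsson--Schlichtkrull \cite{OS} (Theorem 4.2 and Remark 4.3, as explained in Remark \ref{explain} regarding the constants $S$), the holomorphic extension formula (\ref{HolEx}), and the $\widetilde{\sigma}$-equivariance of spherical functions from Theorem \ref{th-SphericalFctTildeWInv} to upgrade $W$-invariance to $\widetilde{W}$-invariance. Your write-up merely spells out these same ingredients in more detail, including the use of the smaller constant from part (1) in part (3) and the uniqueness of the holomorphic extension for small $r$ in part (4), exactly as the paper intends.
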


\begin{proof} This follows from \cite{OS}, (\ref{HolEx}) and Theorem \ref{th-SphericalFctTildeWInv}.
\end{proof}

A weaker version of the following theorem was used in  \cite[Section 11]{OS}.
It used an operator $Q$ which we will define shortly, and
some differentiation,
to prove the surjectivity part of local Paley--Wiener Theorem.
Denote the Fourier transform of $f\in C(G)^G$ by
$\cF (f)$. Recall the
operator $T:\PW_r^\rho(\fh_\C^*)^{\widetilde{W}(\fg,\fh)}\to
\PW_r(\fh_\C^*)^{\widetilde{W}(\fg,\fh)}$ from Theorem \ref{isoPWspaces}. Finally,
for $f\in C (G)$ let $f^\vee (x)=f(x^{-1})$. Then
${}^\vee : C_{r}^\infty (G)^{\widetilde G}\to  C_{r}^\infty (G)^{\widetilde G}$
is a bijection. We will identify $\fa_\C^*$ with the subspace
$\{\lambda\in\fh_\C^*\mid \lambda|_{\fh_\C\cap \fk_\C}=0\}$ without comment
in the following.

\begin{theorem} \label{stronger}
Let $S>0$ be as in {\rm Theorem \ref{t: PW}(1)}
and let $0<r<S$. Then the the
restriction map $\PW_r(\fh^*_\C)^{\widetilde{W}(\fg,\fh)}\to
\PW_r(\fa_\C^*)^{\widetilde{W}(\fg,\fa)}$ is surjective. Furthermore, the map
$C_{r}^\infty (G )^{\widetilde G}\to
C^\infty_{r}(M)^{\widetilde K}$, given by
\[Q(\varphi )(g\cdot x_o )=\int_K \varphi (gk)\, dk,\]
is surjective, and $\cS_\rho \circ Q(f^\vee )= T\circ \cF (f)$ on
$\Lambda^+(G,K)+\rho$.
\end{theorem}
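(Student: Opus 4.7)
The plan is to establish the three assertions in order, using the restriction statement and the identity together to deduce surjectivity of $Q$.

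For the surjectivity of the restriction $\PW_r(\fh_\C^*)^{\widetilde W(\fg,\fh)}\to\PW_r(\fa_\C^*)^{\widetilde W(\fg,\fa)}$, I would invoke Theorem \ref{th-IsoPW1} with $E=\fa$ and $F=\fh$. The two hypotheses to check are $\widetilde W_\fa(\fg,\fh)|_\fa = \widetilde W(\fg,\fa)$ and surjectivity of $\rS(\fh)^{\widetilde W(\fg,\fh)}\to \rS(\fa)^{\widetilde W(\fg,\fa)}$. On the non-extended level these are classical: the little Weyl group is the restriction to $\fa$ of the stabilizer of $\fa$ in the big Weyl group, and Chevalley's restriction theorem for symmetric spaces gives the polynomial surjectivity. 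For the extended Weyl groups in type $D$, one uses the involution $\widetilde\sigma$ of Lemma \ref{le-AutDynkDiagrG}, which can be chosen to preserve $\fh$ and to restrict to $\sigma$ on $\fa$, and thus propagates both statements to the extended level.

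Next, for the identity $\cS_\rho\circ Q(f^\vee) = T\circ\cF(f)$ on $\Lambda^+(G,K)+\rho$, I would evaluate both sides at $\mu+\rho$, $\mu\in\Lambda^+$. By Weyl's dimension formula, the right-hand side is $\tfrac{\varpi(\rho)}{\varpi(\mu+\rho)}\widehat f(\mu) = \widehat f(\mu)/\deg(\mu)$. Unfolding the definition of $Q$, swapping the order of integration, and using that $\pi_\mu(k^{-1})e_\mu = e_\mu$ collapses the $K$-integral and gives
\[
\cS_\rho(Q(f^\vee))(\mu+\rho) \;=\; \int_G f^\vee(g)\,(\pi_\mu(g)e_\mu,e_\mu)\,dg \;=\; (\pi_\mu(f^\vee)e_\mu,e_\mu).
\]
Since $f$ is $\widetilde G$-central, $\pi_\mu(f^\vee)$ is a scalar on $V_\mu$ by Schur's lemma; taking traces identifies that scalar with $\Tr\pi_\mu(f^\vee)/\deg(\mu) = \widehat f(\mu)/\deg(\mu)$, matching the right-hand side.

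For the surjectivity of $Q$, start from $h\in C_r^\infty(M)^{\widetilde K}$. Theorem \ref{t: PW}(1) places $\cS_\rho(h)$ in $\PW_r(\fa_\C^*)^{\widetilde W(\fg,\fa)}$; lift it via the restriction statement to $\Phi \in \PW_r(\fh_\C^*)^{\widetilde W(\fg,\fh)}$, apply $T^{-1}$ from Lemma \ref{isoPWspaces}, and invoke Theorem \ref{t: Gonzalez} (in the range of $r$ for which the holomorphic extension is unique, which is what the hypothesis $r<S$ from Theorem \ref{t: PW}(1) ensures) to find $f\in C_r^\infty(G)^{\widetilde G}$ with $\cF(f) = T^{-1}(\Phi)$. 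Set $\varphi = f^\vee$; the support condition $\Supp\varphi\subseteq B_r(e)$ is preserved under inversion, and the identity $d_M(x_o,gK) = \inf_{k\in K} d_G(e,gk)$ forces $\Supp Q(\varphi) \subseteq B_r\subset M$. By the identity just established, $\cS_\rho(Q(\varphi)) = T\circ\cF(f) = \Phi|_{\fa_\C^*} = \cS_\rho(h)$, and injectivity of $\cS_\rho$ from Theorem \ref{t: PW}(4) gives $Q(\varphi)=h$. The main technical obstacle I foresee is the type-$D$ verification of the polynomial surjectivity $\rS(\fh)^{\widetilde W(\fg,\fh)} \to \rS(\fa)^{\widetilde W(\fg,\fa)}$, where one must carefully track the diagram automorphism through Chevalley restriction; once that is in place the rest is a bookkeeping exercise with Weyl's dimension formula and the metric on $G/K$.
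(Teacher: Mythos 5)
Your overall architecture is the same as the paper's: the first assertion is Theorem \ref{th-IsoPW1} applied to $E=\fa\subseteqq F=\fh$ with the extended Weyl groups, the intertwining identity is a direct computation, and surjectivity of $Q$ is obtained by lifting $\cS_\rho(h)$ from $\PW_r(\fa_\C^*)^{\widetilde W(\fg,\fa)}$ to $\PW_r(\fh_\C^*)^{\widetilde W(\fg,\fh)}$, producing a central $f$ via Gonzalez's theorem, and comparing spherical Fourier coefficients. Your proof of $\cS_\rho\circ Q(f^\vee)=T\circ\cF(f)$ via Schur's lemma applied to $\pi_\mu(f^\vee)$ is an equivalent variant of the paper's computation (which expands $f$ in characters and uses $Q(\chi_\mu^\vee)=\psi_\mu$ together with the Weyl dimension formula), and your support bookkeeping for ${}^\vee$ and for $Q$ is fine.

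The one step whose justification does not hold up as written is the second hypothesis of Theorem \ref{th-IsoPW1}: you assert that ``Chevalley's restriction theorem for symmetric spaces'' gives the surjectivity of $\rS(\fh)^{W(\fg,\fh)}\to\rS(\fa)^{W(\fg,\fa)}$. Chevalley for symmetric spaces identifies $\rS(\fs)^{K}$ with $\rS(\fa)^{W(\fg,\fa)}$; to deduce surjectivity of restriction from $\fh$ one needs in addition the surjectivity of $\rS(\fg)^{G}\to\rS(\fs)^{K}$ (equivalently, of $Z(\fg)$ onto the algebra of invariant differential operators on $G/K$), which is a genuinely different statement: it holds for the classical spaces treated in this paper but fails for certain exceptional symmetric spaces, and is proved case by case. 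Similarly, the extended-group statement $\rS(\fh)^{\widetilde W(\fg,\fh)}|_\fa=\rS(\fa)^{\widetilde W(\fg,\fa)}$ does not follow formally from the existence of $\widetilde\sigma$; it is precisely the content of Theorem 2.2 of \cite{OW10} (together with $\widetilde W(\fg,\fh)|_\fa=\widetilde W(\fg,\fa)$), which is what the paper cites at this point. With that input substituted for the appeal to Chevalley, your argument goes through. A minor further remark: the final identification $Q(f^\vee)=h$ requires only that the spherical Fourier coefficients on $\Lambda^+(G,K)$ determine a $K$-invariant $L^2$ function (Plancherel), not Theorem \ref{t: PW}(4), whose constant $S$ may be smaller than the one in part (1).
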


\begin{proof} Surjectivity of the restriction map follows from
Theorem \ref{th-IsoPW1} and Theorem 2.2 in \cite{OW10} stating that $\widetilde W(\fg,\fh)|_\fa=\widetilde W(\fg,\fa)$ and $\rS (\fh)^{\widetilde W(\fg,\fh)}|_\fa=\rS (\fa)^{\widetilde W(\fg,\fa)}$.

Next, we have $Q (\chi_\mu^\vee )(x)=\int_K \chi_\mu (x^{-1}k)\, dk$.
As $\int_K \pi_\mu (k)\, dk$ is the
orthogonal projection onto $V_\mu^K$ it follows that $Q (\chi_\mu^\vee)=0$ if $\mu\not\in \Lambda^+(G,K)$ and
\[Q(\chi_\mu^\vee) (x)  =
(\pi_\mu (x^{-1})e_\mu,e_\mu)=(e_\mu ,\pi_\mu (x)e_\mu )=\psi_\mu (x)\]
for $\mu \in \Lambda^+(G,K)$. Thus, if
$f =\sum_{\mu} \cF (f)(\mu )\chi_\mu $ we have
$$
Q(f^\vee ) (x)= {\sum}_{\mu\in \Lambda^+(G,K)}\cF (f)(\mu )\psi_{\mu }(x)
={\sum}_{\mu\in \Lambda^+(G,K)}\deg(\mu)\tfrac{\cF (f)(\mu )}{\deg(\mu)} \psi_{\mu }(x).
$$
Using the Weyl dimension formula for finite dimensional representations,
$\deg(\mu )=\frac{\varpi (\mu +\rho)}{\varpi (\rho )}$, we get
\[\cS_\rho (Q(f^\vee ))(\mu +\rho)=
\tfrac{\varpi (\mu +\rho)}{\varpi (\rho )}\, \cF (f)(\mu  )
=T(\cF (f))|_\fa (\mu +\rho)\]
for $\mu \in\Lambda^+(G,K)$. Hence $\cS_\rho\circ Q(f^\vee)|_{\Lambda^+(G,K)}
= (T\circ \cF (f)|_{\fa_\C})|_{\Lambda^+(G,K)}$.

Assume that $f\in C^\infty_{r}(G/K)^{\widetilde K}$. Then, by
the Paley-Wiener Theorem, Theorem \ref{t: PW}, there
exists a $\Phi\in\PW_r(\fa_\C^*)^{\widetilde{W}(\fg,\fa)}$ such that
$\Phi =\cS_\rho (f)$ on $\Lambda^+(G,K)$.
Then, by what we just proved, there exists $\Psi\in \PW_r(\fh_\C^*)^{\widetilde{W}(\fg,\fh)}$
such that $\Psi|_{\fa_\C}=\Phi$.
By  Theorem \ref{t: Gonzalez} there
exists $F\in C_{r}(G)^{\widetilde G}$ such that
$T\circ \cF (F)= \Psi$. By the above calculation we have
\[\cS (f) (\mu) =\cS (Q(F^\vee )) (\mu )\quad \text{for all}\quad \mu\in\Lambda^+(G,K)\, .\]
As clearly $Q (F^\vee)$ is smooth, it follows that $Q(F^\vee )=f$ and
hence $Q$ is surjective.
\end{proof}

\section{A $K$-invariant Domain in $M$ and the Projective Limit}\label{sec7}

\noindent
In this section we introduce an $\widetilde{K}$-invariant domain in $\fs$ that behaves well under
propagation of symmetric spaces. We use the notation from \cite{OW10} for the simple roots.

Let $\sigma = 2(\alpha_1+\ldots +\alpha_\ell)$ where the
$\alpha_j\in \Sigma_2^+$ are the simple roots.
For $M$ irreducible let
\begin{equation}\label{eq-Omega1+2}
\begin{aligned}
\Omega^*:=\,\, &\Omega \text{ if $\Sigma_2$ is of type
$A_\ell$ or $C_\ell$}, \\
\Omega^*:= &\bigcap_{w\in W}\{X\in\fa\mid |\sigma (w(X))| < \pi/2\}
\text{ if $\Sigma_2$ is of type $B_\ell$ or $D_\ell$}.
\end{aligned}
\end{equation}
In general, we define $\Omega^*$ to be the product of the $\Omega^*$'s for all
the irreducible factors. Then $\Omega^*$ is a convex Weyl group invariant
polygon in $\fa$.  We also have $\Omega^* = -\Omega^*$. This is easy to
check and in any case will follow from our explicit description of $\Omega^*$.

\noindent
$\mathbf{A_n}$\textbf{:} We have $\fa =\{x\in\R^{n+1}\mid \sum x_j=0\}$,
$n\geqq 1$, and the roots are the $f_i-f_j: x\mapsto x_i-x_j$ for
$i\not= j$. Hence
\begin{equation}\label{eq-OmegaAn}\Omega^*=
    \Omega=\left \{x\in \R^{n+1}\left | \sum x_j=0\,\,\right .
\text{ and } |x_i-x_j| < \tfrac{\pi}{2} \text{ for }
1\leqq i\not= j\leqq n+1\right \}\, .
\end{equation}

\noindent
$\mathbf{B_n}$\textbf{:} We have $\fa=\R^n$, $n\geqq 2$ and $\sigma =
2(f_1+(f_2-f_1)+\ldots + (f_n-f_{n-1}))=2f_n$. The Weyl
group consists of all permutations and sign changes on the $f_i$. Hence
\begin{equation}\label{eq-OmegaBn}
\Omega^*=\{x\in\R^n\mid  |x_j| < \tfrac{\pi}{4} \text{ for } j=1,\ldots ,n\}\, .
\end{equation}

\noindent
$\mathbf{C_n}$\textbf{:} Again $\fa=\R^n$, $n\geqq 3$, and the roots are the
$\pm (f_i\pm f_j)$ and $\pm 2f_j$. If $|x_i|,|x_j|<\pi /4$ then
$|x_i\pm x_j|<\pi /2$. Hence
\begin{equation}\label{eq-OmegaCn}\Omega^*=\Omega=
\{x\in\R^n\mid |x_j|<\tfrac{\pi}{4} \text{ for } j=1,\ldots ,n\}\, .
\end{equation}

\noindent
$\mathbf{D_n}$\textbf{:} Also in this case $\fa=\R^n$ with $n\geqq 4$. We have
$\sigma = 2(f_1+f_2 + (f_2-f_1)+\ldots + (f_n-f_{n-1}))=2(f_2+f_n)$. As the
Weyl group is given by all permutations and even sign changes on the $f_i$,
we get
\begin{equation}\label{eq-OmegaDn}\Omega^*=
\{x\in\R^n \mid |x_i\pm x_j| < \tfrac{\pi}{4} \text{ for }
i,j=1,\ldots ,n\, ,\,\, i\not= j\}.
\end{equation}

\begin{lemma} We have $\Omega^*\subseteqq \Omega$.
\end{lemma}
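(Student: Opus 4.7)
The plan is a direct case-by-case verification from the explicit descriptions of $\Omega^*$ given in formulas \eqref{eq-OmegaAn}--\eqref{eq-OmegaDn}. Since $\Omega^*$ and $\Omega$ are both defined as products over irreducible factors, it suffices to handle each irreducible type separately.

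For types $A_\ell$ and $C_\ell$, the definition \eqref{eq-Omega1+2} gives $\Omega^* = \Omega$ and there is nothing to prove. For type $B_\ell$, I would take $x \in \Omega^*$, so $|x_j| < \pi/4$ for $j = 1,\dots,\ell$, and check every root $\alpha \in \Sigma$: the short roots $\pm f_i$ yield $|\alpha(x)| = |x_i| < \pi/4 < \pi/2$; the long roots $\pm(f_i \pm f_j)$ give $|\alpha(x)| \leq |x_i| + |x_j| < \pi/2$; and if the non-reduced roots $\pm 2 f_i$ happen to lie in $\Sigma$ (the $BC$ case) we get $|2x_i| < \pi/2$. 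For type $D_\ell$, the roots of $\Sigma$ are exactly $\pm(f_i \pm f_j)$ with $i \neq j$, so the defining inequality $|x_i \pm x_j| < \pi/4$ of $\Omega^*$ implies $|\alpha(x)| < \pi/4 < \pi/2$ at once.

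The main thing to keep in mind is the distinction between $\Sigma$ and its reduced subsystem $\Sigma_2$: since $\Omega$ is defined using all of $\Sigma$ while the shape of $\Omega^*$ is dictated by the type of $\Sigma_2$, one must double-check that the bound on $|\alpha(x)|$ survives when $2\alpha$ also lies in $\Sigma$. This is the only point where a genuine verification is needed, and the bound $|x_j| < \pi/4$ built into $\Omega^*$ in types $B_\ell$ and $D_\ell$ has been arranged precisely to absorb the factor $2$.

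No step requires anything beyond the explicit coordinate descriptions already recorded, so I do not anticipate a substantive obstacle; the only care needed is bookkeeping across the four classical types and remembering the non-reduced case.
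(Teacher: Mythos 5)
Your argument is correct: the trivial cases $A_\ell$, $C_\ell$ are handled by definition, and in types $B_\ell$ and $D_\ell$ the explicit descriptions (\ref{eq-OmegaBn}) and (\ref{eq-OmegaDn}) of $\Omega^*$ do give $|\alpha(x)|<\pi/2$ for every $\alpha\in\Sigma$, including doubled roots. However, your route differs from the paper's. The paper does not return to the coordinate formulas at all; it writes $\Omega = W\{X\in\overline{\fa^+} \mid \delta(X)<\pi/2\}$ for the highest root $\delta$, and observes that for the classical types the coefficients of the simple roots in $\delta$ are all $1$ or $2$, hence $\delta(X)\leqq\sigma(X)$ on the closed positive chamber; since $\Omega^*$ is $W$--invariant and cut out by $|\sigma(wX)|<\pi/2$, the inclusion follows uniformly. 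That argument is slicker, avoids enumerating roots, and makes transparent why $\sigma=2(\alpha_1+\cdots+\alpha_\ell)$ was the right linear functional to use (it dominates the marks of the Dynkin diagram); your verification is more elementary and entirely self-contained given (\ref{eq-OmegaAn})--(\ref{eq-OmegaDn}). One small remark on your $B_\ell$ case: if $\pm 2f_i$ actually lay in $\Sigma$ (the $BC$ situation), then $\Sigma_2$ would be of type $C_\ell$, not $B_\ell$, so $\Omega^*=\Omega$ by definition and that subcase never arises under the $B_\ell$ heading — your extra check is harmless but vacuous, and it would be cleaner to note that $\Sigma_2$ of type $B$ or $D$ forces $\Sigma$ to be reduced.
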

\begin{proof}
Let $\delta$ be the highest root in $\Sigma^+$. Then
\[\Omega =W\{X\in \overline{\fa^+} \mid \delta (X) < \pi/2\}\, .\]
For the classical Lie algebras, the coefficients
of the simple roots in the highest root are all $1$ or $2$.  Hence
$\Omega^*\subseteqq \Omega$ and the claim follows.
\end{proof}

\begin{remark} \label{leftend} {\rm  The distinction between $\Omega$
and $\Omega^*$ is caused by change in the coefficient in the highest
root of the simple root on the left.  Thus in cases $B_n $ and $D_n$
it goes from $1$ to $2$ as we move up in the rank of $M$:
\begin{eqnarray*}
B_\ell &:&
\setlength{\unitlength}{.5 mm}
\begin{picture}(100,10)
\put(3,2){\circle{2}}
\put(0,5){$1$}
\put(4,2){\line(1,0){23}}
\put(28,2){\circle{2}}
\put(25,5){$2$}
\put(29,2){\line(1,0){13}}
\put(48,2){\circle*{1}}
\put(51,2){\circle*{1}}
\put(54,2){\circle*{1}}
\put(59,2){\line(1,0){13}}
\put(73,2){\circle{2}}
\put(70,5){$2$}
\put(74,2.5){\line(1,0){23}}
\put(74,1.5){\line(1,0){23}}
\put(98,2){\circle*{2}}
\put(95,5){$2$}
\end{picture}
\\
\\
D_\ell &:&
\setlength{\unitlength}{.5 mm}
\begin{picture}(100,10)
\put(3,2){\circle{2}}
\put(0,5){$1$}
\put(4,2){\line(1,0){23}}
\put(28,2){\circle{2}}
\put(25,5){$2$}
\put(29,2){\line(1,0){13}}
\put(48,2){\circle*{1}}
\put(51,2){\circle*{1}}
\put(54,2){\circle*{1}}
\put(59,2){\line(1,0){13}}
\put(73,2){\circle{2}}
\put(68,5){$2$}
\put(74,1.5){\line(2,-1){13}}
\put(88,-5){\circle{2}}
\put(91,-7){$1$}
\put(74,2.5){\line(2,1){13}}
\put(88,9){\circle{2}}
\put(91,7){$1$}
\end{picture}
\end{eqnarray*}
while in cases $A_n$ and $C_n$ it doesn't change:
\begin{eqnarray*}
A_\ell&:&
\setlength{\unitlength}{.5 mm}
\begin{picture}(100,15)
\put(3,2){\circle{2}}
\put(0,5){$1$}
\put(4,2){\line(1,0){23}}
\put(28,2){\circle{2}}
\put(25,5){$1$}
\put(29,2){\line(1,0){23}}
\put(53,2){\circle{2}}
\put(50,5){$1$}
\put(54,2){\line(1,0){13}}
\put(72,2){\circle*{1}}
\put(75,2){\circle*{1}}
\put(78,2){\circle*{1}}
\put(84,2){\line(1,0){13}}
\put(98,2){\circle{2}}
\put(95,5){$1$}
\end{picture}\\
\\
C_\ell &:&
\setlength{\unitlength}{.5 mm}
\begin{picture}(100,10)
\put(3,2){\circle*{2}}
\put(0,5){$2$}
\put(5,2){\line(1,0){23}}
\put(28,2){\circle*{2}}
\put(28,5){$2$}
\put(29,2){\line(1,0){13}}
\put(48,2){\circle*{1}}
\put(51,2){\circle*{1}}
\put(54,2){\circle*{1}}
\put(59,2){\line(1,0){13}}
\put(73,2){\circle*{2}}
\put(70,5){$2$}
\put(74,2.5){\line(1,0){23}}
\put(74,1.5){\line(1,0){23}}
\put(98,2){\circle{2}}
\put(95,5){$1$}
\end{picture}
\end{eqnarray*}
}\hfill $\diamondsuit$
\end{remark}

\begin{lemma}\label{le-Omega*} If $S>0$ such that
$\{X\in \fs \mid \|X\|\leqq S\}\subset \Ad (K) \Omega^*\}$, then
we can use $S$ as the constant in {\rm Theorem \ref{t: PW}(1)}.
\end{lemma}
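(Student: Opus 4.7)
The plan is to verify the two conditions singled out in Remark \ref{explain} as sufficient for Theorem \ref{t: PW}(1): namely, $S < R$ and the closed $S$-ball in $\fs$ maps under $\Exp$ into a domain on which $\varphi_\lambda$ admits the holomorphic extension with the Opdam-type estimates used to place $\cS_\rho(f)$ in the Paley-Wiener space.

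First I would note the preceding lemma's inclusion $\Omega^* \subseteq \Omega$ together with the $\Ad(K)$-invariance of the norm; they upgrade the hypothesis $\{X\in\fs\mid \|X\|\leqq S\}\subset \Ad(K)\Omega^*$ to $\{X\in\fs\mid \|X\|\leqq S\}\subset \Ad(K)\Omega$. Combined with the remark following Theorem \ref{also-inj-radius}, which states that $\Omega$ lies inside the open disk in $\fs$ of radius $R/4$, this forces $S \leqq R/4 < R$, so $\Exp$ is a diffeomorphism on the closed $S$-ball. By the Cartan decomposition every point of $B_S \subset M$ can be written as $k\exp(Y)\cdot x_o$ with $k\in K$ and $Y\in \Omega$; since $\Omega$ is star-shaped about the origin, $\Omega\subset 2\Omega$, placing $B_S$ inside the domain $K_\C\exp(2\Omega)\cdot x_o$ on which the spherical functions are known to extend holomorphically.

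Next, for $f \in C^\infty_{r}(M)^{\widetilde K}$ with $r<S$, I would use the integral representation (\ref{HolEx})
\[
\cS_\rho(f)(\lambda) = \int_M f(x)\varphi_\lambda(x^{-1})\,dx
\]
to continue $\cS_\rho(f)$ to a holomorphic function on $\fa_\C^*$, and then invoke the bound $|\varphi_\lambda(\exp(X)\cdot x_o)| \leqq C\,e^{r|\Re\lambda|}$ for $\|X\|\leqq r$ from \cite{Opd} (see also \cite{BOP2005, KS2005}). Integration by parts against invariant differential operators, exploiting the smoothness of $f$, upgrades this to the full Paley-Wiener estimate (\ref{eq-DefPW}) with growth radius $r$. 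Finally, the $\widetilde{W}$-invariance of $\cS_\rho(f)$ follows from applying Theorem \ref{th-SphericalFctTildeWInv} to the $\widetilde{K}$-invariant $f$.

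The main obstacle is really just bookkeeping: the analytical core of the argument already lives in \cite{OS}, and the purpose of this lemma is to isolate a clean geometric condition on $S$ that will be stable under the propagations $M_n\hookrightarrow M_k$ studied in the next section. No new estimate is required; one only has to verify that the $\Omega^*$-hypothesis is strong enough to unlock the proof of Theorem \ref{t: PW}(1) as written in \cite{OS}, which is exactly what the two inclusions established above accomplish.
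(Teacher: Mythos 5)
Your proposal is correct and follows essentially the same route as the paper: both arguments reduce the lemma to the sufficient condition of \cite[Remark 4.3]{OS} (equivalently the first part of Remark \ref{explain}) by using $\Omega^*\subseteqq\Omega$ and the $\Ad(K)$-invariance of the norm. The only cosmetic differences are that you obtain $S<R$ from the remark that $\Omega$ lies in the disk of radius $R/4$, whereas the paper deduces it from injectivity of $\Exp$ on the open set $\Ad(K)\Omega$ (Theorem \ref{also-inj-radius}), and that you re-sketch the holomorphic extension and Opdam estimates from \cite{OS}, which the paper simply cites.
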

\begin{proof} Recall from \cite[Remark 4.3]{OS} that
Theorem \ref{t: PW}(1) holds when $0 < S < R$ and
\begin{equation}\label{eq-Omega}
\{X\in \fs \mid \|X\|\leqq S\}\subseteqq \Ad (K) \Omega\, .
\end{equation}
But $\Ad (K)\Omega$ is open in $\fs$, and $\Exp : \Ad (K)\Omega \to M$ is
injective by Theorem \ref{also-inj-radius}.
Hence, if (\ref{eq-Omega}) holds then $S<R$, and the claim follows from
the first part of Remark \ref{explain}.
\end{proof}

We will now apply this to sequences $\{M_n\}$ where $M_k$ is a propagation of
$M_n$ for $k \geqq n$. We use the same notation as before and add the index
$n$ (or $k$) to indicate the dependence of the space $M_n$ (or $M_k$).
We start with the following lemma.
\begin{lemma}\label{le-Omega2*}  If $k\geqq n$ then
$\Omega_n^*=\Omega^*_k\cap \fa_n$.
\end{lemma}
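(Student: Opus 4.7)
The plan is to reduce the statement to the irreducible case and then verify it by a direct case-by-case check using the explicit descriptions (\ref{eq-OmegaAn})--(\ref{eq-OmegaDn}).

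First I would observe that $\Omega^*_n$ and $\Omega^*_k$ are defined as products over irreducible factors, and propagation of $M_k$ over $M_n$ respects irreducible factors with $\fa_{n}^s\subseteqq \fa_{k}^s$ factor by factor. So it suffices to prove the identity when $M_n$ and $M_k$ are irreducible. Moreover, by Lemma \ref{X}, since propagation only adds simple roots at the left end of the Dynkin diagram, the embedding $\fa_n\hookrightarrow \fa_k$ is, in all four classical types, given by setting the ``new'' coordinates $x_{r_n+1},\ldots ,x_{r_k}$ equal to zero (in type $A$ the ambient space is the trace-zero hyperplane, but the same description applies). This is the key normalization that makes the restriction clean.

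Next I would go through the four types using the explicit polygons in (\ref{eq-OmegaAn})--(\ref{eq-OmegaDn}):

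\textbf{Types $B$ and $C$.} Here $\Omega^*$ is cut out by the inequalities $|x_j|<\pi/4$, one per coordinate. Intersecting with the subspace where the extra coordinates vanish manifestly gives the same inequalities on the surviving coordinates, which is $\Omega^*_n$.

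\textbf{Type $A$.} The inequalities defining $\Omega^*_k$ are $|x_i-x_j|<\pi/2$ for $i\ne j$ together with $\sum x_j=0$. Setting the extra coordinates to $0$, the surviving inequalities split into two groups: (a) $|x_i-x_j|<\pi/2$ for $i,j$ among the $\fa_n$-coordinates, and (b) $|x_i|<\pi/2$ coming from pairing an $\fa_n$-coordinate with an ``extra'' coordinate that is zero. Group (a) is exactly the defining condition for $\Omega^*_n$, so I only need to check that (b) is automatic on $\Omega^*_n$. This is the case because on $\fa_n$ we have $\sum_i x_i=0$, so $\max_i x_i>0>\min_i x_i$ (for $r_n\geqq 1$), and then $|x_j|\leqq \max x_i-\min x_i<\pi/2$.

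\textbf{Type $D$.} Now $\Omega^*_k$ is defined by $|x_i\pm x_j|<\pi/4$ for $i\ne j$. Intersecting with $\fa_n$ (extra coordinates $=0$) produces (a) $|x_i\pm x_j|<\pi/4$ for $i\ne j$ among $\fa_n$-coordinates, together with (b) $|x_i|<\pi/4$ from pairing with a vanishing extra coordinate. Group (a) is exactly $\Omega^*_n$, and (b) is implied by (a): from $|x_i+x_j|<\pi/4$ and $|x_i-x_j|<\pi/4$ we get $|2x_i|<\pi/2$, i.e.\ $|x_i|<\pi/4$ (here we use $r_n\geqq 2$, which is fine since type $D$ propagation is only considered for $r_n\geqq 4$).

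The only mildly tricky point is precisely these ``mixed'' inequalities in types $A$ and $D$ that seem at first sight to shrink $\Omega_k^*\cap\fa_n$ strictly inside $\Omega_n^*$; but the algebraic identities above show they are already forced by the intrinsic $\Omega_n^*$-conditions. Once these four verifications are done, combining them across irreducible factors completes the proof.
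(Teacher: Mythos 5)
Your proof is correct and follows essentially the same route as the paper: reduce to irreducible factors, use the explicit descriptions (\ref{eq-OmegaAn})--(\ref{eq-OmegaDn}), and check case by case that the ``mixed'' inequalities are automatic, with the type $D$ step identical and the type $A$ step differing only cosmetically (your $\max$/$\min$ bound versus the paper's identity $(r_n+1)x_i=\sum_{j\ne i}(x_i-x_j)$).
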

\begin{proof} We can assume that $M$ is irreducible. As $M_k$ propagates
$M_n$ it follows that we are only adding simple roots to the left
on the Dynkin diagram for $\Sigma_2$. Let $r_n$ denote the rank of $M_n$ and
$r_k$ the rank of $M_k$. We can assume that $r_n < r_k$, as the claim is obvious
for $r_n = r_k$. We use the above explicit description $\Omega^*$ given above
and case by case
inspection:

Assume that $\Sigma_{n,2}$ is of type $A_{r_n}$ and $\Sigma_{k,2}$ is
of type $A_{r_k}$ with $r_n < r_k$. It follows from (\ref{eq-OmegaAn}) that
$\Omega_n^*\subseteqq \Omega_k^*\cap \fa_n$.
Let $(0,x)\in \Omega_n^*$.
For $j>i$ we have
\begin{equation}\label{eq-alpha(0,x)}
\pm (f_{j}-f_i) ((0,x))=\left\{\begin{array}{c@{\quad\text{for}\quad}l}
\pm (x_j-x_i)&j\leqq r_n+1\\
\mp (-x_i)&j>r_n+1\geqq i\\
0&j,i>r_n+1
\end{array}\right.
\end{equation}
Let $i\leqq r_n+1$. Then, using that $x_i=-\sum_{j\not=i}x_j$ and
$|x_i-x_j| <\pi/2$, we get
$$
-r_k\tfrac{\pi}{2} <   \sum_{i\ne j} (x_i - x_j) = r_k x_i-\sum_{j\not= i}x_j
=(r_k+1)x_i < r_k\tfrac{\pi}{2}\, .
$$
Hence
\[-\tfrac{\pi}{2}<-\tfrac{r_k}{r_k+1}\, \tfrac{\pi}{2}<x_i<\tfrac{r_k}{r_k+1}\, \tfrac{\pi}{2}<\tfrac{\pi}{2}\, .\]
It follows now from (\ref{eq-alpha(0,x)}) that $(0,x)\in \Omega_k^*\cap \fa_n$.

The cases of types $B$ and $C$ are obvious from (\ref{eq-OmegaBn}) and
(\ref{eq-OmegaCn}). For the case of type $D$
we note that $|x_i\pm x_j|< \tfrac{\pi}{4}$ implies both
-$\tfrac{\pi}{4} < x_i - x_j < \tfrac{\pi}{4}$ and
-$\tfrac{\pi}{4} < x_i + x_j < \tfrac{\pi}{4}$.  Adding,
-$\tfrac{\pi}{2} < 2x_i < \tfrac{\pi}{2}$, so $|x_i| < \tfrac{\pi}{4}$.
Hence $(0,x)\in \Omega_k^*\cap \fa_n$ if and only if
$x\in \Omega_n^*$ by (\ref{eq-OmegaDn}).
\end{proof}

We can now proceed as in Section \ref{sec3}. We will always assume that
$S>0$ is small enough that $\Omega^*$ contains the closed ball in
$\fs$ of radius $S$.  Define
$C^{k}_{n}:C^\infty_{r}(M_k)^{\widetilde K_k}\to C^\infty_{r}(M_n)^{\widetilde  K_n}$ by
$C^{k}_{n}:=\cI_{n,\rho_n} \circ P^{k}_{n}\circ \cS_{k, \rho_k}$, in other words
\[C^{k}_{n}(f)(x)=\sum_{I\in(\Z^+)^{r_n}} \deg(\mu_{I,n})\widehat{f}
(\mu_{I,k}-\rho_k+\rho_n) \psi_{\mu_{I,n}}(x)\, .\]

\begin{theorem}[Paley-Wiener Isomorphism-II]\label{th-PWcompactII} If
$M_k$ propagates $M_n$ and $0<r<S$ then
\begin{enumerate}
\item the map $P^{k}_{n}:
\PW_r (\fa_{k,\C}^*)^{\widetilde  W_k} \to
\PW_r(\fa_{n,\C}^*)^{\widetilde  W_n}$ is surjective, and
\item the map $C^{k}_{n} :C^\infty_{r}(M_k)^{\widetilde  K_k}
\to C^\infty_{r}(M_n)^{\widetilde K_n}$ is surjective.
\end{enumerate}
\end{theorem}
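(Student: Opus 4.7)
The plan is to reduce both parts to results already established in the paper, exactly as was done for Theorem \ref{th-IsomorphismNonCompact2} in the noncompact case, but now using the compact Paley--Wiener theorem (Theorem \ref{t: PW}) in place of its noncompact analogue, together with the compatibility of the domains $\Omega^*$ established in Section \ref{sec7}.

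For part (1), I would apply Theorem \ref{th-IsoPW1} with $E = \fa_n$, $F = \fa_k$ and reflection groups $\widetilde{W}_n$, $\widetilde{W}_k$, noting that each $\widetilde{W}$ is a finite reflection group (either of the same type as the underlying $W$ or of type $B_r$ in the type $D$ case, as constructed in Section \ref{sec2}). The two hypotheses of Theorem \ref{th-IsoPW1} are supplied verbatim by Theorem \ref{th-AdmExtG/K}: we have $\widetilde{W}_{\fa_n}(\fg_k,\fa_k)|_{\fa_n}=\widetilde{W}(\fg_n,\fa_n)$, and the restriction $\rS(\fa_k)^{\widetilde{W}_k} \to \rS(\fa_n)^{\widetilde{W}_n}$ is surjective. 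Theorem \ref{th-IsoPW1} then yields surjectivity of the restriction map $P^{k}_{n}$.

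For part (2), I would use the factorization
\[
C^{k}_{n} = \cI_{n,\rho_n} \circ P^{k}_{n} \circ \cS_{k,\rho_k}
\]
built into the definition. By Theorem \ref{t: PW}(4), for $0<r<S$ the map $\cS_{k,\rho_k}$ is a bijection $C^\infty_{r}(M_k)^{\widetilde{K}_k} \to \PW_r(\fa_{k,\C}^*)^{\widetilde{W}_k}$, and similarly $\cI_{n,\rho_n}$ is a bijection $\PW_r(\fa_{n,\C}^*)^{\widetilde{W}_n} \to C^\infty_{r}(M_n)^{\widetilde{K}_n}$. Combined with the surjectivity of $P^{k}_{n}$ from part (1), the composition $C^{k}_{n}$ is surjective.

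The main subtlety is ensuring that a single constant $S$ is valid simultaneously for both $M_n$ and $M_k$, so that Theorem \ref{t: PW} may be invoked on both sides of the diagram. This is exactly the content of the convention stated just before the theorem: by Lemma \ref{le-Omega*}, any $S$ such that the closed $S$-ball in $\fs_k$ lies in $\mathrm{Ad}(K_k)\Omega^*_k$ will do for $M_k$, and Lemma \ref{le-Omega2*} gives $\Omega^*_n = \Omega^*_k \cap \fa_n$, so the same $S$ controls $M_n$ as well. With this compatibility in hand the argument reduces to a formal composition, and the only real work — carried out already in Section \ref{sec7} — is the case-by-case verification underlying Lemma \ref{le-Omega2*}.
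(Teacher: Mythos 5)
Your proposal is correct and follows essentially the same route as the paper: part (1) is Theorem \ref{th-IsoPW1} with hypotheses furnished by Theorem \ref{th-AdmExtG/K} (noting $\widetilde{W}$ is a finite reflection group), and part (2) is the formal composition $C^{k}_{n}=\cI_{n,\rho_n}\circ P^{k}_{n}\circ \cS_{k,\rho_k}$ combined with Theorem \ref{t: PW}, with Lemma \ref{le-Omega*} and Lemma \ref{le-Omega2*} guaranteeing that one constant $S$ serves both $M_k$ and $M_n$ --- exactly the three ingredients the paper cites. Your write-up merely makes explicit the details that the paper's one-line proof leaves implicit.
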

\begin{proof} This follows from  Theorem \ref{th-IsoPW1}, Lemma \ref{le-Omega*},
and Lemma \ref{le-Omega2*}.
\end{proof}

We now assume that $\{M_n,\iota_{k,n}\}$ is a injective system of
riemannian symmetric spaces of compact type
such that the direct system maps $\iota_{k,n}: M_n\to M_k$ are injections
and $M_k$ is a propagation of $M_n$ along a cofinite subsequence.
Passing to that cofinite subsequence we may assume that
$M_k$ is a propagation of $M_n$ whenever $k \geqq n$.  Denote
$M_\infty =\varinjlim M_n\,$.

The compact symmetric spaces of Table \ref{symmetric-case-class} give
rise to the following injective limits of symmetric spaces.

\begin{equation}\label{e-infiniteDim}
\begin{aligned}
{\rm 1.}\ &\bigl (\SU (\infty) \times \SU(\infty)\bigr )/\diag\,\SU(\infty),
    \text{ group manifold } \SU(\infty),\\
{\rm 2.}\ &\bigl (\Spin(\infty)\times \Spin(\infty)\bigr )/\diag\, \Spin(\infty),
    \text{ group manifold } \Spin(\infty),\\
{\rm 3.}\ &\bigl (\Sp(\infty)\times \Sp(\infty)\bigr )/\diag\, \Sp(\infty),
    \text{ group manifold } \Sp (\infty),\\
{\rm 4.}\ &\SU (p + \infty)/\mathrm{S}(\U(p)\times \U(\infty)),\
    \C^p \text{ subspaces of } \C^\infty, \\
{\rm 5.}\ &\SU (2\infty)/[\mathrm{S}(\U (\infty) \times \U (\infty))],\
    \C^\infty \text{ subspaces of infinite codim in } \C^\infty, \\
{\rm 6.}\ &\SU (\infty)/\SO (\infty),\
    \text{ real forms of } \C^\infty \\
{\rm 7.}\ &\SU (2\infty)/\Sp (\infty), \
    \text{ quaternion vector space structures on } \C^\infty,\\
{\rm 8.}\ &\SO (p + \infty)/[\SO (p)\times \SO (\infty)],\text{ oriented }
    \R^p \text{ subspaces of } \R^\infty, \\
{\rm 9.}\ &\SO (2\infty)/[\SO (\infty)\times \SO (\infty)], \
    \R^\infty \text{ subspaces of infinite codim in } \R^\infty, \\
{\rm 10.}\ &\SO (2\infty)/\U (\infty), \
    \text{ complex vector space structures on } \R^\infty, \\
{\rm 11.}\ &\Sp (p + \infty)/[\Sp(p)\times \Sp(\infty)],\
    \H^p \text{ subspaces of } \H^\infty, \\
{\rm 12.}\ &\Sp (2\infty)/[\Sp(\infty)\times \Sp(\infty)], \
    \H^\infty \text{ subspaces of infinite codim in } \H^\infty, \\
{\rm 13.}\ &\Sp (\infty)/\U (\infty), \
    \text{ complex forms of } \H^\infty.
\end{aligned}
\end{equation}

We also have as before injective systems
$\fg_n\hookrightarrow \fg_k$, $\fk_n\hookrightarrow \fk_k$, $\fs_n\hookrightarrow \fs_k$,
and $\fa_n\hookrightarrow \fa_k$ giving rise to corresponding injective systems. Let
$$
\mfg_\infty :=\varinjlim \mfg_n\, ,\quad \gk_\infty :=\varinjlim \gk_n\, ,\quad
\gs_\infty :=\varinjlim\gs_n\, , \quad \ga_\infty := \varinjlim\ga_n\, ,
 \quad\text{and}, \quad \fh_\infty :=\varinjlim \fh_n\, .$$
Then $\mfg_\infty=\gk_\infty \oplus \gs_\infty$ is the eigenspace decomposition of
$\mfg_\infty $ with respect to the involution $\theta_\infty :=\varinjlim \theta_n$,
$\fa_\infty$ is a maximal abelian subspace of $\fs_\infty$.

Further, we have also projective systems
$\{\PW_{r}(\fa_{n,\C})^{\widetilde W_n}\}$
and $\{C_{r}(M_n)^{\widetilde K_n}\}$ with surjective projections, and
their limits.
$$
\PW_r(\fa^*_{\infty,\C} )^{\widetilde W_\infty}:=
   \varprojlim \PW_{r}(\fa_{n,\C}^*)^{\widetilde W_n} \text{ and }
   C_{r}(M_\infty)^{\widetilde K_\infty} :=
   \varprojlim C_{r}(M_n)^{\widetilde K_n} \, .
$$
As before we view the elements of
$\PW_r(\fa^*_{\infty,_\C})^{\widetilde W_\infty}$ as
$\widetilde W_\infty$--invariant functions on $\fa_{\infty,\C}^*$. For $\mathbf{f}=(f_n)_n\in
C_{r}(M_\infty)^{\widetilde K_\infty}$ define $\cS_{\rho,\infty} (\mathbf{f})
\in
\PW_r(\fa_{\infty,\C}^*)^{\widetilde W_\infty}$ by
\begin{equation}
\cS_{\rho,\infty} (\mathbf{f}):=\{\cS_{\rho,n}(f_n)\}\, .
\end{equation}
Then $\cS_{\rho,\infty} (\mathbf{f})$ is well defined by Theorem \ref{th-PWcompactII}
and we have a commutative diagram
$$\xymatrix{\cdots & C^\infty_{r}(M_n)^{\widetilde K_n}\ar[d]_{\cS_{\rho,n}}&
C^\infty_{r}(M_{n+1})^{\widetilde K_{n+1}}\ar[d]_{\cS_{\rho,n+1}}\ar[l]_(0.55){C^{n+1}_{n}}&
\ar[l]_(0.3){C^{n+2}_{n+1}}\qquad \cdots &
C_{r}(M_\infty)^{\widetilde  K_\infty}\ar[d]_{\cS_{\rho,\infty}}
 \\
\cdots & \PW_r(\fa^*_{n,\C})^{\widetilde W_n}&  \PW_r(\fa^*_{n+1,\C})^{\widetilde W_{n+1}} \ar[l]^(0.55){P^{n+1}_{n}}
&\ar[l]^(0.3){P^{n+2}_{n+1}} \qquad \cdots & \PW_r(\fa_{\infty,\C}^* )^{\widetilde W_\infty}\\
}
$$
Also see \cite{OW,KW2009} for the spherical Fourier transform and
direct limits.

\begin{theorem}[Infinite dimensional Paley-Wiener Theorem-II]
\label{th-ProjLimCompact}
In the above notation,
$\PW_r(\fa_{\infty,\C}^*)^{\widetilde W_\infty}\not=\{0\}$,
$C_{r }(M_\infty)^{\widetilde K_\infty}\not=\{0\}$,
and the spherical Fourier transform
\[\cF_\infty :C_{r}(M_\infty)^{\widetilde K_\infty}
\to \PW_r(\fa^*_{\infty,\C})^{\widetilde W_\infty}\]
is injective.
\end{theorem}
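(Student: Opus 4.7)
The plan is to deduce the theorem directly from Theorem \ref{th-PWcompactII} and the finite-dimensional Paley--Wiener Theorem \ref{t: PW}, following the same scheme as Theorem \ref{th-projectiveLimitPW} and Theorem \ref{th-ProjLimNonCompact}. The three claims (nonvanishing of the Paley--Wiener limit, nonvanishing of the function-space limit, and injectivity of $\cF_\infty$) will be handled in that order.

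First I would establish that $\PW_r(\fa_{\infty,\C}^*)^{\widetilde W_\infty}\ne\{0\}$. By Theorem \ref{t: PW}(4), for $0<r<S$ the map $\cS_{\rho,1}$ is a bijection between $C^\infty_{r}(M_1)^{\widetilde K_1}$ and $\PW_r(\fa_{1,\C}^*)^{\widetilde W_1}$; the former is nonzero because it contains radial bump functions supported in $B_r$, so pick any nonzero $F_1\in\PW_r(\fa_{1,\C}^*)^{\widetilde W_1}$. Using the surjectivity of $P^{n+1}_{n}$ from Theorem \ref{th-PWcompactII}(1), I recursively choose $F_{n+1}\in\PW_r(\fa_{n+1,\C}^*)^{\widetilde W_{n+1}}$ with $P^{n+1}_{n}(F_{n+1})=F_n$ for every $n\geqq 1$. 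The resulting coherent thread $(F_n)$ is a nonzero element of $\PW_r(\fa_{\infty,\C}^*)^{\widetilde W_\infty}$.

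For $C_{r}(M_\infty)^{\widetilde K_\infty}\ne\{0\}$, I would use the commutativity of the large diagram displayed just before the theorem together with the finite-dimensional bijection in Theorem \ref{t: PW}(4). Setting $f_n:=\cS_{\rho,n}^{-1}(F_n)$ yields a thread in $\varprojlim C^\infty_{r}(M_n)^{\widetilde K_n}$, because the defining identity $C^{n+1}_n=\cI_{n,\rho_n}\circ P^{n+1}_n\circ \cS_{n+1,\rho_{n+1}}$ forces $C^{n+1}_n(f_{n+1})=f_n$. Alternatively, the recursion can be run directly using the surjectivity of $C^{k}_{n}$ from Theorem \ref{th-PWcompactII}(2), beginning with any nonzero $\widetilde K_1$--invariant bump function supported in $B_r\subset M_1$.

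Finally, injectivity of $\cF_\infty=\cS_{\rho,\infty}$ is essentially automatic. If $\mathbf{f}=(f_n)$ satisfies $\cS_{\rho,\infty}(\mathbf{f})=0$, then by the very definition $\cS_{\rho,\infty}(\mathbf{f})=\{\cS_{\rho,n}(f_n)\}$ we have $\cS_{\rho,n}(f_n)=0$ for each $n$, and Theorem \ref{t: PW}(4) gives $f_n=0$ for every $n$, hence $\mathbf{f}=0$. I expect no real obstacle in this argument, since all the heavy lifting has been done in Theorems \ref{t: PW} and \ref{th-PWcompactII}; the only delicate point is to verify, directly from the formula defining $C^{k}_{n}$, that the outer square of the commutative diagram really does commute, so that the recursive constructions produce coherent threads and the inverse maps $\cS_{\rho,n}^{-1}$ assemble consistently.
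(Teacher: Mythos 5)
Your overall scheme (recursive lifting through the surjective restriction maps, plus levelwise injectivity) is the intended one, but the way you invoke Theorem \ref{t: PW}(4) creates a genuine gap. You use the bijection $\cS_{\rho,n}\colon C^\infty_r(M_n)^{\widetilde K_n}\cong \PW_r(\fa_{n,\C}^*)^{\widetilde W_n}$ at \emph{every} level $n$ with the same fixed $r$ --- once to define $f_n:=\cS_{\rho,n}^{-1}(F_n)$, and again to conclude $f_n=0$ from $\cS_{\rho,n}(f_n)=0$. But the constant $S$ in part (4) is not the constant governing Theorem \ref{th-PWcompactII}: by Remark \ref{explain}, part (4) additionally requires $\|X\|\leqq \pi/\|\xi_j\|$ for all class--one fundamental weights, and these norms are not stable under propagation (for the $B$/$D$ series $\|\xi_{n,r_n}\|$ grows like $\sqrt{n}$), so no single $r>0$ makes part (4) available along the whole sequence. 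The uniform constant $S$ in Section \ref{sec7} is the one coming from $\Omega^*$ (Lemmas \ref{le-Omega*} and \ref{le-Omega2*}), which only guarantees parts (1)--(3), i.e.\ the surjectivity statements of Theorem \ref{th-PWcompactII}. This is precisely why the theorem asserts only \emph{injectivity} of $\cF_\infty$, in contrast with the noncompact Theorem \ref{th-ProjLimNonCompact} where the levelwise transforms are honest bijections for all $r$; if your uniform use of part (4) were legitimate, $\cF_\infty$ would be an isomorphism, which the paper deliberately does not claim.

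The repairs are short and you already half-state them. For $C_r(M_\infty)^{\widetilde K_\infty}\not=\{0\}$, use your ``alternative'' route as the main one: start from a nonzero $\widetilde K_1$--invariant bump function supported in $B_r$ and lift recursively through the surjective maps $C^{n+1}_n$ of Theorem \ref{th-PWcompactII}(2); similarly the nonzero thread in $\PW_r(\fa_{\infty,\C}^*)^{\widetilde W_\infty}$ needs only a nonzero $F_1$ (e.g.\ the extension of $\cS_{\rho,1}(f_1)$ provided by Theorem \ref{t: PW}(1), nonzero by the Plancherel theorem) together with the surjectivity of $P^{n+1}_n$ from Theorem \ref{th-PWcompactII}(1). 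For injectivity of $\cF_\infty$ you do not need the Paley--Wiener bijection at all: if $\cS_{\rho,n}(f_n)=0$ as a holomorphic function, then in particular $\cS(f_n)(\mu)=\cS_{\rho,n}(f_n)(\mu+\rho_n)=0$ for all $\mu\in\Lambda^+_n$, and since $\cS$ is a unitary isomorphism $L^2(M_n)^{K_n}\to\ell^2_d(\Lambda^+_n)$ this forces $f_n=0$. With these substitutions your argument goes through for every $0<r<S$ with $S$ the uniform constant of Section \ref{sec7}.
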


\section{Comparison with the $L^2$ Theory}\label{sec8}
\noindent
Theorem \ref{th-ProjLimCompact} is based on limits of $C^\infty$  and
$C^\infty_c$ spaces, rather than isometric immersions, $L^2$ spaces, and
unitary representation theory.  Just as the $L^2$ space of a
compact symmetric space is the Hilbert space completion of the
corresponding $C^\infty$ space, it is now known \cite[Proposition 3.27]{W2011}
that the same is true for inductive limits of compact symmetric spaces.
Here we discuss those inductive limit $L^2$ spaces, clarifying the
connection between Paley--Wiener theory and $L^2$ Fourier transform
theory.

Any consideration of the
projective limit of $L^2$ spaces follows similar lines by replacing
the the maps of the inductive limit by the corresponding orthogonal
projections, because inductive and projective limits are the
same in the Hilbert space category.

The material of this section is taken from \cite[Section 3]{W2010}
and \cite[Section 3]{W2011}
and adapted to our setting.  We assume without further comments that
all extensions are propagations.

There are three steps to the comparison.  First, we describe the construction
of a direct limit Hilbert space
$L^2(M_\infty) := \varinjlim \{L^2(M_n),L_{m,n}\}$ that carries a natural
multiplicity--free unitary action of $G_\infty$.  Then we describe the
ring $\cA(M_\infty) := \varinjlim \{\cA(M_n),\nu_{m,n}\}$
of regular functions on $M_\infty$ where $\cA(M_n)$ consists of the
finite linear combinations of the matrix coefficients of the $\pi_\mu$
with $\mu \in \Lambda_n^+(G_n,K_n)$ and such that $\nu_{m,n}(f)|_{M_n} = f$.
Thus $\cA(M_\infty)$ is a (rather small) $G_\infty$--submodule of the projective
limit $\varprojlim\{\cA(M_n),\text{ restriction}\}$.  Third, we describe
a $\{G_n\}$--equivariant morphism
$\{\cA(M_n),\nu_{m,n}\} \rightsquigarrow \{L^2(M_n),L_{m,n}\}$ of
direct systems that embeds $\cA(M_\infty)$ as a dense $G$--submodule of
$L^2(M_\infty)$, so that $L^2(M_\infty)$ is $G_\infty$--isomorphic to a Hilbert
space completion of the function space $\cA(M_\infty)$.

We recall first some basic facts about the vector valued Fourier transform on
$M_n$ as well as the decomposition of $L^2(M_n)$ into irreducible summands.
To simplify notation write $\Lambda^+_n$ for $\Lambda^+(G_n,K_n)$.
Let $\mu \in\Lambda_n^+$ and let $V_{n,\mu}$ denote the irreducible
$G_n$--module of highest weight $\mu$. Recursively in $n$, we
choose a highest weight vector $v_{n,\mu} \in V_{n,\mu}$ and
and a $K_n$--invariant unit vector $e_{n,\mu} \in V_\mu^{K_n}$
such that
(i) $V_{n-1,\mu} \hookrightarrow V_{n,\mu}$ is isometric and
$G_{n-1}$--equivariant and sends $v_{n-1,\mu}$ to a multiple
of $v_{n,\mu}$, (ii) orthogonal projection $V_{n,\mu} \to V_{n-1,\mu}$
sends $e_{n,\mu}$ to a non--negative real
multiple $c_{n,n-1,\mu}e_{n-1,\mu}$ of $e_{n-1,\mu}$, and (iii)
$\langle v_{n,\mu},e_{n,\mu}\rangle =1$.
(Then $0 < c_{n,n-1,\mu} \leqq 1$.)  Note that orthogonal projection
$V_{m,\mu} \to V_{n,\mu}, m \geqq n$, sends
$e_{m,\mu}$ to $c_{m,n,\mu}e_{n,\mu}$ where
$c_{m,n,\mu} = c_{m,m-1,\mu}\cdots c_{n+1,n,\mu}$.

The Hermann Weyl degree formula provides polynomial functions on $\ga_\C^*$
that map $\mu$ to $\deg(\pi_{n,\mu}) = \dim V_{n,\mu}$.
Earlier in this paper we had written
$\deg(\mu)$ for that degree when $n$ was fixed, but here it is crucial to
track the variation of $\deg(\pi_{n,\mu})$ as $n$ increases.
Define a map $v\mapsto f_{n,\mu, v}$ from $V_{n,\mu}$ into
$L^2(M_n)$ by
\begin{equation}\label{def-Coeff}
f_{n,\mu,v}(x) = \langle v,\pi_{n,\mu}(x)e_\mu \rangle \,.
\end{equation}
It follows by the Frobenius--Schur orthogonality relations that
$v\mapsto \deg(\pi_{n,\mu} )^{1/2} f_{\mu,v}$ is a unitary
$G_n$ map from $V_\mu $ onto its image in $L^2(M_n)$.

The operator valued Fourier transform
$$L^2(G_n) \to \bigoplus_{\mu\in \Lambda^+_n}\mathrm{Hom} (V_{n,\mu},V_{n,\mu})
\cong \bigoplus_{\mu\in \Lambda^+_n}V_{n,\mu}\otimes V_{n,\mu}^*$$
is defined by $f \mapsto \bigoplus_{\mu\in \Lambda^+_n}\pi_{n,\mu} (f)$ where
$\pi_{n,\mu} (f) \in \mathrm{Hom} (V_{n,\mu},V_{n,\mu})$ is given by
\begin{equation}
\pi_{n,\mu} (f) v := \int_{G_n} f(x) \pi_{n,\mu} (x)v\,\text{ for } f\in L^2(G_n)\,  .
\end{equation}
Denote by $P^{K_n}_\mu $ the orthogonal projection
$V_{n,\mu} \to V_{n,\mu} ^{K_n}$. Then
$P^{K_n}_\mu (v)=\int_{K_n}\pi_{n,\mu} (k)v\, dk$,
and if $f$ is right $K_n$--invariant, then
$$\pi_{n,\mu} (f)=\pi_{n,\mu} (f)P^{K_n}_\mu\, .$$
That gives us the vector valued Fourier transform
$f\mapsto \widehat{f}: \Lambda^+_n\to \bigoplus_{\mu\in\Lambda_n^+} V_{n,\mu}$\,,
\begin{equation}
L^2(M_n)\to \bigoplus_{\mu\in\Lambda_n^+} V_{n,\mu}
\text{ defined by } f\mapsto \widehat{f}(\mu ) :=\pi_{n,\mu} (f)e_{n,\mu}\, .
\end{equation}
Then the Plancherel formula for $L^2(M_n)$ states that
\begin{equation}\label{e-FourierExpansion}
f=\sum_{\mu\in \Lambda_n^+} \deg(\pi_{n,\mu} ) f_{\mu ,\widehat{f}(\mu )}
=\sum_{\mu\in \Lambda_n^+} \deg(\pi_{n,\mu} ) \langle
\widehat{f}(\mu ), \pi_{n,\mu} (\, \cdot \, )e_{n,\mu} \rangle
\end{equation}
in $L^2(M_n)$ and
\begin{equation}\label{e-Norm}
\|f\|^2_{L^2}=\sum_{\mu\in \Lambda^+_n} \deg(\pi_{n,\mu} )
\|\widehat{f}(\mu )\|_{HS}^2\, .
\end{equation}
If $f$ is smooth, then the series in (\ref{e-FourierExpansion}) converges
in the $C^\infty$ topology of $C^\infty (M_n)$.

For $n\leqq m$ and $\mu = \mu_{I,n}\in \Lambda^+_n$ consider the following
diagram of unitary $G_n$-maps, adapted from \cite[Equation 3.21]{W2011}:
$$
\xymatrix{V_{\mu_{I,n}}
\ar[d]_{v\mapsto \deg(\pi_{n,\mu})^{1/2} f_{\mu_{I,n},v}}\ar[r]^{v\mapsto v}
& V_{\mu_{I,m}}\ar[d]^{v\mapsto \deg(\pi_{m,\mu})^{1/2}  f_{\mu_{I,m},v}}\\
L^2(M_n) \ar[r]_{L_{m,n}}& L^2(M_m)   }
$$
where $L_{m,n}: L^2(M_n)\to L^2(M_k)$ is the $G_n$--equivariant
partial isometry defined by
\begin{equation}\label{def-psi(m,n)}
L_{k,n}: \sum_{I_n} f_{\mu_{I,n},w_{I}}\mapsto
 \sum_{I_m} c_{m,n,\mu}\sqrt{\tfrac{\deg (\pi_{m,\mu})}{\deg (\pi_{n,\mu})}}\,
f_{\mu_{I,m},w_{I}}\, ,\quad w_I\in V_{n,\mu}\, .
\end{equation}
As in \cite[Section 4]{W2011} we have
\begin{theorem}\label{fun-restriction}
The map $L_{k,n}$ of {\rm (\ref{def-psi(m,n)})}
is a $G_n$--equivariant partial isometry with image
$$
\Im (L_{m,n} )\cong {\bigoplus}_{I\in (\Z^+)^{r_k},\
k_{r_n+1}=\ldots = k_{r_k}=0} V_{\mu_{I}}\, .
$$
If
$n\leqq m \leqq k$ then
$$L_{k,n}=L_{m,n}\circ L_{k,m}$$
making $\{L^2(M_n),L_{k ,n}\}$ into
a direct system of Hilbert spaces.
\end{theorem}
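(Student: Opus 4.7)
The proof decomposes into verifying $G_n$-equivariance of $L_{k,n}$, the partial isometry property together with the image description, and finally the cocycle identity. I would proceed as follows.

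First, Peter--Weyl together with the Cartan--Helgason Theorem \ref{t-CH} gives the orthogonal decomposition
\[
L^2(M_n)\;=\;\bigoplus_{I\in(\Z^+)^{r_n}}\{f_{\mu_{I,n},v}\mid v\in V_{n,\mu_{I,n}}\},
\]
with the map $v\mapsto\sqrt{\deg(\pi_{n,\mu_{I,n}})}\,f_{\mu_{I,n},v}$ a unitary $G_n$-intertwiner (under left translation) onto its image, by Frobenius--Schur orthogonality. Since $L_{k,n}$ is defined summand-by-summand via this isomorphism using the $G_n$-equivariant inclusion $V_{n,\mu}\hookrightarrow V_{k,\mu}$ furnished by Theorem \ref{l-inductiveSystemOfRep}, $G_n$-equivariance is immediate.

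The heart of the argument is the identification of the image and the norm computation. I would write $e_{k,\mu}=c_{k,n,\mu}\,e_{n,\mu}+e^{\perp}$ with $e^{\perp}\in V_{n,\mu}^{\perp}\subset V_{k,\mu}$. Because both $V_{n,\mu}$ and its orthogonal complement in $V_{k,\mu}$ are $G_n$-stable, for $w\in V_{n,\mu}$ and $x\in G_n$ one has $\langle w,\pi_{k,\mu}(x)e^{\perp}\rangle=0$, so the restriction of $f_{\mu_{I,k},w}$ to $G_n$ equals $c_{k,n,\mu}\,f_{\mu_{I,n},w}$. Combined with the Schur orthogonality identity $\|f_{\mu,v}\|^2=\deg(\pi_\mu)^{-1}\|v\|^2$, this explains the normalization $c_{k,n,\mu}\sqrt{\deg(\pi_{k,\mu})/\deg(\pi_{n,\mu})}$ and delivers the required norm estimate summand by summand. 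The image is then precisely the span of $f_{\mu_{I,k},w}$ with $I_k=(I_n,0_{r_k-r_n})$ and $w$ ranging over the $G_n$-submodule $V_{n,\mu_{I,n}}\subset V_{k,\mu_{I,k}}$, which is the isotypic decomposition asserted in the theorem.

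For the cocycle identity $L_{k,n}=L_{k,m}\circ L_{m,n}$, I would check agreement on each matrix-coefficient basis vector $f_{\mu_{I,n},w}$. This reduces to three ingredients: the multiplicativity $c_{k,n,\mu}=c_{k,m,\mu}\cdot c_{m,n,\mu}$, obtained by factoring the orthogonal projection $V_{k,\mu}\to V_{n,\mu}$ through $V_{m,\mu}$ and applying the recursive choice of $K$-fixed unit vectors; the compatibility of the isometric inclusions $V_{n,\mu}\subset V_{m,\mu}\subset V_{k,\mu}$; and the telescoping $\sqrt{\deg(\pi_{k,\mu})/\deg(\pi_{n,\mu})}=\sqrt{\deg(\pi_{k,\mu})/\deg(\pi_{m,\mu})}\cdot\sqrt{\deg(\pi_{m,\mu})/\deg(\pi_{n,\mu})}$. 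The main technical obstacle throughout is bookkeeping of the constants $c_{k,n,\mu}$, which encode the fact that $e_{k,\mu}$ need not lie in $V_{n,\mu}$; once the recursive normalization of the $K$-fixed vectors is in hand this becomes a direct verification, and it is essentially carried out in \cite[Section 3]{W2010} and \cite[Section 3]{W2011}, which one may invoke.
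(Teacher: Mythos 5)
Your overall route coincides with the paper's, which itself gives no argument beyond deferring to \cite[Section 4]{W2011}: decompose $L^2(M_n)$ by Peter--Weyl and Cartan--Helgason, treat $L_{k,n}$ isotypic component by isotypic component through the unitary maps $v\mapsto \deg(\pi_{n,\mu})^{1/2}f_{\mu,v}$, and obtain the cocycle identity from the multiplicativity $c_{k,n,\mu}=c_{k,m,\mu}\,c_{m,n,\mu}$ (orthogonal projections compose along $V_{n,\mu}\subseteqq V_{m,\mu}\subseteqq V_{k,\mu}$) together with the telescoping degree ratios. The equivariance, the description of the image, and the cocycle verification (your composition $L_{k,n}=L_{k,m}\circ L_{m,n}$ is the correctly typed form of the identity) are all fine.

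The gap is in the partial-isometry step. With the normalization actually printed in (\ref{def-psi(m,n)}), i.e.\ with the factor $c_{k,n,\mu}$ present, Schur orthogonality on $G_k$ and on $G_n$ gives summand by summand
\[
\bigl\|L_{k,n}f_{\mu_{I,n},w}\bigr\|_{L^2(M_k)}
= c_{k,n,\mu}\sqrt{\tfrac{\deg \pi_{k,\mu}}{\deg \pi_{n,\mu}}}\;\frac{\|w\|}{\sqrt{\deg \pi_{k,\mu}}}
= c_{k,n,\mu}\,\bigl\|f_{\mu_{I,n},w}\bigr\|_{L^2(M_n)},
\]
so the $\mu$-component is scaled by $c_{k,n,\mu}\leqq 1$; since $\ker L_{k,n}=0$, ``partial isometry'' here means isometry, and this fails whenever $c_{k,n,\mu}<1$, which is the generic situation (it would require $e_{k,\mu}\in V_{n,\mu}$ to have $c_{k,n,\mu}=1$). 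Note also that the restriction identity $f_{\mu_{I,k},w}|_{G_n}=c_{k,n,\mu}f_{\mu_{I,n},w}$, which you derive correctly, concerns an integral over $M_n$ and says nothing about the $L^2(M_k)$-norm of the image, so it cannot ``deliver the required norm estimate'' as you claim. To actually prove the isometry statement you must work with the normalization $f_{\mu_{I,n},w}\mapsto \sqrt{\deg \pi_{k,\mu}/\deg \pi_{n,\mu}}\;f_{\mu_{I,k},w}$ without the $c$-factor (the constants $c_{k,n,\mu}$ then enter only through the comparison maps $\eta_{n,\mu}$ of Section \ref{sec8}), or else weaken the conclusion to ``injective $G_n$-equivariant contraction with the stated image.'' Your write-up asserts the norm estimate without confronting this discrepancy, and that is precisely the missing verification.
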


\noindent
Define
\begin{equation}
L^2(M_\infty ) := \varinjlim L^2(M_n),
\end{equation}
direct limit in the category of Hilbert spaces and unitary injections.

From construction of the $L_{m,n}$ we now have
\begin{theorem}[\cite{W2010}, Theorem 13] \label{cor-symm-mfree}
The left regular representation of $G_\infty$ on $L^2(M_\infty)$
is a multiplicity free discrete direct sum of
irreducible representations.  Specifically, that left regular
representation is $\sum_{I \in \cI} \pi_I$ where
$\pi_I = \varinjlim \pi_{I,n}$ is the irreducible representation
of $G_\infty$ with highest weight $\xi_I := \sum k_r\xi_r$.  This
applies to all the direct systems of {\rm (\ref{e-infiniteDim})}.
\end{theorem}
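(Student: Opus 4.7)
The plan is to reduce the assertion to the Cartan--Helgason decomposition at each finite level together with the explicit description of the connecting maps $L_{m,n}$ given in equation (\ref{def-psi(m,n)}), and then pass to the inductive limit.

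First I would recall the Plancherel decomposition at level $n$: by the Cartan--Helgason Theorem \ref{t-CH},
\[
L^2(M_n) \;=\; \widehat{\bigoplus}_{I\in(\Z^+)^{r_n}} V_{n,\mu_{I,n}}
\]
as a unitary $G_n$--module, each summand appearing with multiplicity one and consisting of the matrix coefficients $f_{\mu_{I,n},v}$ of (\ref{def-Coeff}). The formula (\ref{def-psi(m,n)}) shows that $L_{m,n}$ maps the $\mu_{I,n}$--isotypic summand isometrically (up to the positive scalar $c_{m,n,\mu}\sqrt{\deg(\pi_{m,\mu})/\deg(\pi_{n,\mu})}$, absorbed into the chosen highest weight vector) into the $\mu_{I,m}$--isotypic summand, where $I\in(\Z^+)^{r_n}$ is identified with $(I,0_{r_m-r_n})\in(\Z^+)^{r_m}$ exactly as in Theorem \ref{l-inductiveSystemOfRep}. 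In particular, $L_{m,n}$ respects the Cartan--Helgason decomposition under this identification, and the connecting map on the $\mu_{I,n}$--component is the unique $G_n$--intertwining embedding $V_{n,\mu_{I,n}}\hookrightarrow V_{m,\mu_{I,m}}$ of Theorem \ref{l-inductiveSystemOfRep}(3).

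Next I would define $\cI$ as the set of sequences $I=(k_1,k_2,\ldots)\in(\Z^+)^{\infty}$ with finitely many nonzero entries; every such $I$ lies in $(\Z^+)^{r_n}$ for all sufficiently large $n$ (after padding by zeros). For each $I\in\cI$ the restriction of the direct system $\{L^2(M_n),L_{m,n}\}$ to the corresponding isotypic pieces is (up to the unitary rescalings already built into (\ref{def-psi(m,n)})) the direct system $\{V_{n,\mu_{I,n}}\}$ of Theorem \ref{l-inductiveSystemOfRep}; its Hilbert space inductive limit is $V_{\infty,I}:=\varinjlim V_{n,\mu_{I,n}}$, which carries the irreducible $G_\infty$--representation $\pi_I=\varinjlim \pi_{I,n}$ of highest weight $\xi_I=\sum_r k_r\xi_r$ (irreducibility and well-definedness being given by Lemma \ref{resmult} and Theorem \ref{l-inductiveSystemOfRep}, while $G_\infty$--equivariance passes to the limit). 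Because inductive limits commute with orthogonal Hilbert direct sums in the Hilbert space category, one obtains
\[
L^2(M_\infty)\;=\;\varinjlim L^2(M_n)\;=\;\varinjlim \widehat{\bigoplus}_{I\in(\Z^+)^{r_n}} V_{n,\mu_{I,n}}
\;=\;\widehat{\bigoplus}_{I\in\cI} V_{\infty,I}\;=\;\widehat{\bigoplus}_{I\in\cI}\pi_I.
\]

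Finally I would verify multiplicity freeness: the highest weights $\{\xi_I\}_{I\in\cI}$ are pairwise distinct since $(k_1,k_2,\ldots)\mapsto\sum k_r\xi_r$ is injective on $\cI$, and by Cartan--Helgason each $\pi_{I,n}$ occurs with multiplicity one in $L^2(M_n)$; this multiplicity one property is preserved by the isometric embeddings $L_{m,n}$, hence by the inductive limit, so each $\pi_I$ occurs exactly once in $L^2(M_\infty)$. The same argument works for every system in (\ref{e-infiniteDim}) because the only ingredients used are Cartan--Helgason, the propagation hypothesis, and the explicit form of (\ref{def-psi(m,n)}). The main technical point to handle carefully is the interchange of the Hilbert-space direct sum with the inductive limit: one must check that the isometric embeddings $L_{m,n}$ carry the Hilbert-space orthogonal decomposition of $L^2(M_n)$ into the Hilbert-space orthogonal decomposition of $L^2(M_m)$ summand by summand, which is exactly what (\ref{def-psi(m,n)}) guarantees.
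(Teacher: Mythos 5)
Your proposal is correct and takes essentially the same route as the paper, which obtains the statement (citing \cite{W2010}, Theorem 13) directly from the construction of the maps $L_{m,n}$ in (\ref{def-psi(m,n)}) and Theorem \ref{fun-restriction}: the connecting maps carry the multiplicity--free Cartan--Helgason decomposition of $L^2(M_n)$ summand by summand into that of $L^2(M_m)$ (with the zero--padding identification of indices), so $L^2(M_\infty)$ is the Hilbert sum of the limits $\varinjlim V_{n,\mu_{I,n}}$. The only point you treat more lightly than \cite{W2010} is the irreducibility of each limit $\pi_I=\varinjlim \pi_{I,n}$ and the pairwise inequivalence of the $\pi_I$ for distinct $I$, which require a short Schur--type argument in the limit (using the nested $G_n$--equivariant projections) rather than just the finite--level Lemma \ref{resmult} and Theorem \ref{l-inductiveSystemOfRep}.
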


The problem with the partial isometries $L_{m,n}$ is that they do not
work well with restriction of functions, because of rescaling
and because $L_{m,n}(L^2(M_n)^{K_n}) \not\subset L^2(M_m)^{K_m}$
for $n < m$.
In particular the spherical functions $\psi_{I,n}(g) :=
\langle e_{I,n} ,\pi_{I,n}(g)e_{I,n})\rangle$ do not map
forward, in other words $L_{m,n}(\psi_{I,n})\not=\psi_{I,m}$.

We deal with this by viewing $L^2(M_\infty)$ as a Hilbert space completion
of the ring $\cA(M_\infty) := \varinjlim \cA(M_n)$ of regular functions on
$M_\infty$.  Adapting \cite[Section 3]{W2011} to our notation, we define
\begin{equation}\label{symm-quo-lim-coef}
\begin{aligned}
&\cA(\pi_{n,\mu})^{K_n} = \{\text{finite linear combinations of the }
  f_{\mu,I_n,w_I}
  \text{ where } w_I \in V_{n,\mu} \}, \\
&\nu_{m,n,\mu}:\cA(\pi_{n,\mu})^{K_n} \hookrightarrow
  \cA(\pi_{m,\mu})^{K_m} \text{ by }
        f_{\mu,I_n,w_I} \mapsto f_{\mu,I_m,w_I}\,\, .
\end{aligned}
\end{equation}
Thus \cite[Lemma 2.30]{W2011} says that if
$f \in \cA(\pi_{n,\mu})^{K_n}$ then $\nu_{m,n,\mu}(f)|_{M_n} = f$.

The ring of regular functions on $M_n$ is
$\cA(M_n) := \cA(G_n)^{K_n}
= \sum_\mu \cA(\pi_{n,\mu})$, and the $\nu_{m,n,\mu}$ sum to
define a direct system $\{\cA(M_n),\nu_{m,n}\}$.  Its limit is
\begin{equation}\label{symm-quo-lim-reg}
\cA(M_\infty) := \cA(G_\infty)^{K_\infty} =
\varinjlim \{\cA(M_n),\nu_{m,n}\}.
\end{equation}
As just noted, the maps of the direct system $\{\cA(M_n),\nu_{m,n}\}$
are inverse to restriction of functions, so $\cA(M_\infty)$ is a
$G_\infty$--submodule
of the inverse limit $\varprojlim \{\cA(M_n), \text{ restriction}\}$.

For each $n$,
$\cA(M_n)$ is a dense subspace of $L^2(M_n)$ but, because the
$\nu_{m,n}$ distort the Hilbert space structure,
$\cA(M_\infty)$ does not sit naturally as a subspace of $L^2(M_\infty)$.
Thus we use the $G_n$--equivariant maps
\begin{equation}\label{sym-rel-map-sys}
\eta_{n,\mu}: \cA(\pi_{n,\mu})^{K_n} \to
\cH_{\pi_n}\widehat{\otimes}(w_{n,\mu^*}\C) \text{ by }
f_{\mu,I_n,w_I} \mapsto c_{n,1,\mu}
\sqrt{\deg \pi_{n,\mu}}\, f_{\mu,I_n,w_I}.
\end{equation}
where $c_{m,n,\mu}$ is the length of the projection of $e_{m,\mu}$
to $V_{n,\mu}$.  Now \cite[Proposition 3.27]{W2011} says
\begin{proposition}\label{sym-quo-comparison}
The maps $L_{m,n,\mu}$ of {\rm (\ref{def-psi(m,n)})},
$\nu_{m,n,\mu}$ of {\rm (\ref{symm-quo-lim-coef})} and
$\eta_{n,\mu}$ of {\rm (\ref{sym-rel-map-sys})}
satisfy $$(\eta_{m,\mu}\circ \nu_{m,n,\mu})(f_{\mu,I_n,w_I}) =
(L_{m,n,\mu}\circ\eta_{n,\mu})(f_{\mu,I_n,w_I})$$
for $f_{u,v,n} \in \cA(\pi_{n,\mu})^{K_n}$.  Thus they inject
the direct system $\{\cA(M_n), \nu_{m,n}\}$
into the direct system $\{L^2(M_n),L_{m,n}\}$.
That map of direct systems defines a $G_\infty$--equivariant injection
$$
\widetilde{\eta}: \cA(M_\infty) \to L^2(M_\infty)
$$
with dense image.  In particular $\eta$ defines a pre Hilbert space
structure on $\cA(M_\infty)$ with completion isometric to $L^2(M_\infty)$.
\end{proposition}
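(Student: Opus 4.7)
The plan is to reduce everything to a single scalar identity at the level of matrix coefficients, then assemble the three claims (commutativity, morphism of direct systems, and the resulting Hilbert space embedding) in turn.

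First I would verify the commutativity square on each basis element $f_{\mu,I_n,w_I}$ with $w_I\in V_{n,\mu}$. Chasing the diagram downward-then-right gives
\[
(\eta_{m,\mu}\circ\nu_{m,n,\mu})(f_{\mu,I_n,w_I})
 = c_{m,1,\mu}\sqrt{\deg\pi_{m,\mu}}\, f_{\mu,I_m,w_I},
\]
while chasing right-then-downward, using the formula for $L_{m,n,\mu}$ in (\ref{def-psi(m,n)}), produces
\[
(L_{m,n,\mu}\circ\eta_{n,\mu})(f_{\mu,I_n,w_I})
 = c_{n,1,\mu}\,c_{m,n,\mu}\sqrt{\deg\pi_{m,\mu}}\,f_{\mu,I_m,w_I}.
\]
Equality then reduces to the telescoping identity $c_{m,1,\mu}=c_{n,1,\mu}\,c_{m,n,\mu}$, which follows from the recursive construction of the $e_{n,\mu}$: the orthogonal projection $V_{m,\mu}\to V_{1,\mu}$ factors through $V_{n,\mu}$ and therefore the length of the projection factors as a product.

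Next I would assemble the $\eta_{n,\mu}$ into a map of direct systems. Summing over $\mu$ at each level $n$ gives a $G_n$--equivariant linear map $\eta_n:\cA(M_n)\to L^2(M_n)$, and the commutativity verified above (together with additivity in $\mu$) says exactly that $\eta_m\circ\nu_{m,n}=L_{m,n}\circ\eta_n$. The universal property of direct limits then produces a linear map $\widetilde\eta:\cA(M_\infty)\to L^2(M_\infty)$. Equivariance with respect to $G_\infty=\varinjlim G_n$ is automatic because it holds at each finite level and the limiting maps are constructed $G_n$--equivariantly.

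For injectivity I would argue levelwise: each $\eta_{n,\mu}$ is a strictly positive scalar $c_{n,1,\mu}\sqrt{\deg\pi_{n,\mu}}$ times the tautological embedding $V_{n,\mu}\otimes w_{n,\mu^*}\C\hookrightarrow L^2(M_n)$ coming from Peter--Weyl, hence injective, and therefore $\eta_n$ is injective on $\cA(M_n)=\bigoplus_\mu\cA(\pi_{n,\mu})^{K_n}$. Injectivity of a compatible family of maps of direct systems passes to the limit because if $\widetilde\eta(f)=0$ for some $f\in\cA(M_\infty)$ represented by $f_n\in\cA(M_n)$, then $L_{\infty,n}(\eta_n(f_n))=0$, so $\eta_n(f_n)=0$ (the maps to the limit are injective in this category), hence $f_n=0$ and $f=0$.

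The main obstacle is density of $\Im(\widetilde\eta)$. At each finite level, the Peter--Weyl theorem tells us $\cA(M_n)$ is dense in $L^2(M_n)$ and hence $\eta_n(\cA(M_n))$ is dense in $L^2(M_n)$ (it differs from the image of $\cA(M_n)$ only by the positive scalars $c_{n,1,\mu}\sqrt{\deg\pi_{n,\mu}}$, which are nonzero on each isotypic component). To pass this to the limit, I would use the description of $L^2(M_\infty)$ from Theorem \ref{cor-symm-mfree} as the multiplicity--free Hilbert sum $\bigoplus_{I\in\cI}\cH_{\pi_I}$ where each $\cH_{\pi_I}=\varinjlim V_{I,n}$; a vector in a fixed isotype $\cH_{\pi_I}$ is the limit in $L^2$-norm of its projections to $V_{I,n}$, each of which lies in $\eta_n(\cA(\pi_{n,\mu_I})^{K_n})$, so it lies in the closure of $\widetilde\eta(\cA(M_\infty))$. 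Summing over the finitely supported decompositions gives a dense subspace, which forces density of $\widetilde\eta(\cA(M_\infty))$. Finally, pulling back the inner product from $L^2(M_\infty)$ through the injection $\widetilde\eta$ defines a pre-Hilbert structure on $\cA(M_\infty)$ whose completion is isometrically $\widetilde\eta(\cA(M_\infty))^{-}=L^2(M_\infty)$, completing the proof.
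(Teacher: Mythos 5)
Your proof is correct: the commutativity of the square reduces, exactly as you say, to the telescoping identity $c_{m,1,\mu}=c_{m,n,\mu}\,c_{n,1,\mu}$, which is immediate from the definition $c_{m,n,\mu}=c_{m,m-1,\mu}\cdots c_{n+1,n,\mu}$ (equivalently, from factoring the orthogonal projection $V_{m,\mu}\to V_{1,\mu}$ through $V_{n,\mu}$), and the remaining steps (passage to the limit, levelwise injectivity, density via Peter--Weyl on each $M_n$, and pulling back the inner product) are the standard ones in the paper's framework of unitary injections. The paper itself gives no argument here --- it quotes the statement from \cite[Proposition 3.27]{W2011} --- and your verification is precisely the computation underlying that citation.
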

This describes $L^2(M_\infty)$ as an ordinary Hilbert space completion of
a natural function space on $M_\infty$.

\end{document}